\newenvironment{eq}{\begin{equation}}{\end{equation}}
\newenvironment{proof}{{\bf Proof}:}{\vskip 5mm }
\newtheorem{proposition}{Proposition}[section]
\newtheorem{lemma}[proposition]{Lemma}
\newtheorem{definition}[proposition]{Definition}
\newtheorem{example}[proposition]{Example}
\newtheorem{remark}[proposition]{Remark}
\newtheorem{problem}[proposition]{Problem}
\newtheorem{construction}[proposition]{Construction}
\newcommand{\llabel}[1]{\label{#1}}
\newcommand{\comment}[1]{}
\newcommand{\sr}{\rightarrow}
\newcommand{\nn}{{\bf N\rm}}
\newcommand{\uu}{\underline}
\newcommand{\wt}{\widetilde}
\newcommand{\dd}{\diamond}
\newcommand{\spc}{{\,\,\,\,\,\,\,}}
\newcommand{\wtOb}{{\wt{\mathcal Ob}}}
\newcommand{\Ob}{{\mathcal Ob}}
\newcommand{\wOb}{\wt{\Ob}}
\newcommand{\nat}{\nn}
\begin{document}
\parskip = 2mm
\begin{center}
{\bf\Large C-systems defined by universe categories: presheaves}\footnote{\em 2000 Mathematical Subject Classification: 
03F50, 
18C50  
18D15, 
}{$^,$}\footnote{For the published version of the paper see \url{http://www.tac.mta.ca/tac/volumes/32/3/32-03abs.html}}

\vspace{3mm}

{\large\bf Vladimir Voevodsky}\footnote{School of Mathematics, Institute for Advanced Study,
Princeton NJ, USA. e-mail: vladimir@ias.edu}
\vspace {3mm}

\end{center}
\begin{abstract}
The main result of this paper may be stated as a construction of ``almost representations'' $\mu_n$ and $\wt{\mu}_n$ for the presheaves $\Ob_n$ and $\wOb_n$ on the C-systems defined by locally cartesian closed universe categories with binary product structures and the study of the behavior of these ``almost representations'' with respect to the universe category functors. 

In addition, we study a number of constructions on presheaves on C-systems and on universe categories that are used in the proofs of our main results, but are expected to have other applications as well. 
\end{abstract}

\tableofcontents

\numberwithin{equation}{section}

\section{Introduction}
\label{Sec.0}

The concept of a C-system in its present form was introduced in \cite{Csubsystems}. The type of the C-systems is constructively equivalent to the type of contextual categories defined by Cartmell in \cite{Cartmell0} and \cite{Cartmell1} but the definition of a C-system is slightly different from Cartmell's foundational definition.

In \cite[Sec.3]{fromunivwithPiI} we constructed, on any C-system and for any $n\ge 0$,  presheaves $\Ob_n$ and $\wtOb_n$. These presheaves play a major role in our approach to the C-system formulation of systems of operations that correspond to systems of inference rules. For example, by providing a construction for \cite[Prob. 4.5]{fromunivwithPiI} we construct a representation in terms of these presheaves of the most important structure on the C-systems, the structure corresponding to the $(\Pi,\lambda, app, \beta, \eta)$-system of inference rules. Another formulation of this structure is the Cartmell-Streicher structure of products of families of types \cite[pp. 3.37 and 3.41]{Cartmell0}, \cite[p. 71]{Streicher}. 

This paper appeared as an outcome of the work on systematization and generalization of the ideas that has been used in the second part of the preprint \cite{fromunivwithPi}. In the process of this work new structures became apparent both on the C-systems and on the universe categories whose general properties could be combined to obtained clear and systematic proofs of the main theorems of the second part of that preprint. 

As a result the paper is about certain constructions on presheaves on C-systems and universe categories, about their interaction in the case of the C-systems of the form $CC({\cal C},p)$, and about their behavior with respect to the universe category functors. 

The paper is subdivided into two parts. In the first part there are collected constructions and theorems (most often called `lemmas')  about structures on one C-system or universe category or one pair of a C-system and a universe category. In the second part we study the behavior of these constructions with respect to the universe category functors.   

Throughout the paper we use the diagrammatic order in writing the composition, that is, for $f:X\sr Y$ and $g:Y\sr Z$ we write $f\circ g$ for their composition. This convention applies to functions between sets, morphisms in categories, functors etc. 

We denote by $\Phi^{\circ}$ the functor $PreShv(C')\sr PreShv(C)$ given by the pre-composition with a functor $\Phi^{op}:C^{op}\sr (C')^{op}$, that is, 
$$\Phi^{\circ}(F)(X)=F(\Phi(X))$$
In the literature this functor is denoted both by $\Phi^*$ and $\Phi_*$ and we decided to use a new unambiguous notation instead. 

In the first Section \ref{Sec.1} we study a functor $Sig:PreShv(CC)\sr PreShv(CC)$ defined from any C-system $CC$. This functor comes together with functor isomorphisms 
$$S\Ob_n: Sig(\Ob_n)\sr \Ob_{n+1}$$
$$S\wOb_n:Sig(\wOb_n)\sr \wOb_{n+1}$$
that makes it important for the study of the `structure sheaves' $\Ob_n$ and $\wOb_n$.

In Section \ref{Sec.2} we study a functor $D_p:PreShv({\cal C})\sr PreShv({\cal C})$ that is defined for any category $\cal C$ with a choice of a universe $p$ (see \cite[Def. 2.1]{Cfromauniverse}) in it. We define the sets $D_p^n(X,Y)$, which play a major role in all that follows, by the formula
$$D_p^n(X,Y)=D_p^n(Yo(Y))(X)$$
where $Yo(Y)$ is the contravariant functor represented by $Y$ and $D_p^n$ refers to the $n$-th iteration of $D_p$. These sets are functorial both in $X$ and $Y$ and we use the notation $\circ$ for this functoriality, that is, for $d\in D_p^n(X,Y)$, $f:X'\sr X$ and $g:Y\sr Y'$ we set
$$f\circ d=D_p^n(f,Y)(d)$$
$$d\circ g=D_p^n(X,g)(g)$$
This $\circ$-notation is justified by the identity and associativity equalities collected in Lemma \ref{2017.01.07.l1} and makes many computations in the following sections more convenient. 

In Section \ref{Sec.3} we construct the first of the main isomorphisms that we work with in this paper, namely, the isomorphisms $u_1$ and $\wt{u}_1$ of the form
$$u_1:\Ob_1\sr int^{\circ}(Yo(U))$$
$$\wt{u}_1:\wOb_1\sr int^{\circ}(Yo(\wt{U}))$$
Here we start with a universe category, a category with a universe and a final object, that we denote $({\cal C},p)$ where $p:\wt{U}\sr U$ is the universe. Associated to it in \cite[Constr. 2.12]{Cfromauniverse} is a C-system $CC({\cal C},p)$ that comes together with a fully-faithful functor $int:CC({\cal C},p)\sr {\cal C}$. For a functor $\Phi$ we let $\Phi^{\circ}$ denote the functor of pre-composition with $\Phi$ on presheaves. In particular, for a presheaf $F$ on ${\cal C}$, $int^{\circ}(F)$ is a presheaf on $CC({\cal C},p)$. The isomorphisms $u_1$ and $\wt{u}_1$ provide what we referred to as `almost representations' for $\Ob_1$ and $\wOb_1$.

In Section \ref{Sec.4} we connect, by a functor isomorphism $SD_p$,  the functors $D_p$ and $Sig$ in the case of a pair of a universe category and the associated C-system. The isomorphism $SD_p$ has the form
$$SD_p:int^{\circ}\circ Sig\sr D_p\circ int^{\circ}$$

In Section \ref{Sec.5} we combine $S\Ob_n$, $Sig$ and $SD_p$ to provide the inductive step for a definition of isomorphisms 
$$u_n:\Ob_n\sr int^{\circ}(D_p^{n-1}(Yo(U)))$$
$$\wt{u}_n:\wOb_n\sr int^{\circ}(D_p^{n-1}(Yo(\wt{U})))$$
starting with $u_1$ and $\wt{u}_1$. This construction combines all constructions introduced in Sections \ref{Sec.1}-\ref{Sec.4}. 

Section \ref{Sec.6} is different from the preceding sections in that, that we assume an additional structure on $\cal C$ namely the combination of the locally cartesian closed and binary product structures. Under the assumption of these structures we construct a functor $I_p:{\cal C}\sr {\cal C}$ and a family of representations for presheaves of the form $D_p(Yo(V))$ 
that we denote by $\eta_V$:
$$\eta_V:D_p(Yo(V))\sr Yo(I_p(V))$$
This family is natural in $V$ forming a functor isomorphism
$$\eta:Yo\circ D_p\sr I_p\circ Yo$$
where both functors are from $\cal C$ to $PreShv({\cal C})$. By a simple iterative construction we obtain from $\eta$ functor isomorphisms
$$\eta_n:Yo\circ D_p^n\sr I_p^n\circ Yo$$
and with them representations of presheaves of the form $D_p^n(Yo(V))$. The naturality properties of our construction admit convenient presentation in the $\circ$-notation:
$$\eta_n(f\circ d)=f\circ \eta_n(d)$$
$$\eta_n(d\circ g)=\eta_n(d)\circ I_p^n(g)$$
Combining isomorphisms $\eta$ with isomorphisms $u$ and $\wt{u}$ we obtain the second main family of isomorphisms constructed in this paper
$$\mu_n:\Ob_n\sr int^{\circ}(Yo(I_p^{n-1}(U)))$$
$$\wt{\mu}_n:\wOb_n \sr int^{\circ}(Yo(I_p^{n-1}(\wt{U})))$$

To be able to rigorously construct $I_p$ as a functor and later to study its properties with respect to universe category functors we need notations related to the locally cartesian closed structures that we could not find in the literature. Therefore, we added the two appendices where these notations are introduced. The appendices contain known facts about categories, but it seems that these facts have never been represented at the level of detail that we need in this paper. 

We may add that using a locally cartesian closed and a binary product structure to construct $I_p(V)$ and the representations of the functors $D_p(Yo(V))$ by means of these objects is to require a lot where much less would suffice. It would be sufficient to simply require a family of objects $I_p(V)$ parametrized by $V\in {\cal C}$ and representations $\eta_V$ of $D_p(Yo(V))$ by means of $I_p(V)$. The functoriality for this family in $V$ may then be derived from the representations and the functoriality of $Yo\circ D_p$ and this functoriality will automatically make the representations natural in $V$. This is a much less grand requirement that a full locally cartesian closed plus a binary products structure.  However, we will continue to consider the construction based on these two structures because they are more familiar. 

Section \ref{Sec.6} completes the first part of the paper. In the second part we consider how the constructions introduced in the first part interact with the universe category functors. 

In Section \ref{Sec.7} we study the interaction of the functor $D_p$ and the construction $D_p^n(X,Y)$ with the universe category functors. We start with a universe category functor ${\bf\Phi}=(\Phi,\phi,\wt{\phi}):({\cal C},p)\sr ({\cal C}',p')$ and construct a functor isomorphism
$$\Phi D:\Phi^{\circ}\circ D_p\sr D_{p'}\circ \Phi^{\circ}$$
Then we use this isomorphism along with a number of other constructions to define, for all $X,Y\in {\cal C}$ and $n\ge 0$, functions
$${\bf\Phi}^n_{X,Y}:D_p^n(X,Y)\sr D_{p'}^n(\Phi(X),\Phi(Y))$$
These functions are natural in $X$ and $Y$, which, in the $\circ$-notation, can be written as
$${\bf\Phi}^n(f\circ d)=\Phi(f)\circ {\bf\Phi}^n(d)$$
$${\bf\Phi}^n(d\circ g)={\bf\Phi}^n(d)\circ \Phi(g)$$

In Section \ref{Sec.8} we study the interaction of the isomorphisms $u_n$ and $\wt{u}_n$ with the universe category functors. First, we recall the construction, given in \cite[Constr. 4.7]{Cfromauniverse}, of the homomorphism of C-systems $H:CC({\cal C},p)\sr CC({\cal C}',p')$ defined by $\bf\Phi$. We also show that the family of isomorphisms $\psi(\Gamma)$ constructed in the same paper forms a functor  isomorphism
$$\psi:H\circ int\sr int\circ \Phi$$
The main result of this section is Lemma \ref{2016.12.20.l4} that establishes the rule under which isomorphisms $u_n$ and $\wt{u}_n$ are transformed by universe category functors. 

In Section \ref{Sec.9} we study the interaction of universe category functors between locally cartesian closed universe categories with binary product structures and the constructions $I_p$ and $\eta_n$. No assumption about the compatibility of the functor with the locally cartesian closed or binary product structures used to define $I_p$ and $I_{p'}$ and the corresponding $\eta_n$ functions is made. In a somewhat surprising result no such assumption turns out to be necessary to construct a natural family of morphisms 
$$\chi_{{\bf\Phi},n}:\Phi(I_p^n(Y))\sr I_{p'}^n(\Phi(Y))$$
that satisfy, for $d\in {\bf\Phi}^n(X,Y)$, the equalities
$$\eta_n({\bf\Phi}^n(d))=\Phi(\eta_n(d))\circ \chi_n(Y)$$

In Section \ref{Sec.10} we apply the results of the previous section and Section \ref{Sec.8} to describe the interaction of isomorphisms $\mu_n$ and $\wt{\mu}_n$ with universe category functors. This is the last result of the present paper. 

In \cite{fromunivwithPiII} the result of Section \ref{Sec.10} it is used to prove the functoriality theorem for the construction of a $(\Pi,\lambda)$-structure on $CC({\cal C},p)$ from a $P$-structure on $p$. 

Throughout the paper we work in the Zermelo-Fraenkel foundations. The methods of this paper are constructive in the following sense. Neither the axiom of the excluded middle nor the axiom of choice are used. Unbounded universal quantification is used since we make statements about ``any C-system'' etc. and operate with concepts such as a presheaf or a family without specifying the target universe. I seems that this use of unbounded quantification can be easily eliminated at the price of extending the foundational system to include one or two universes. We have not fully analyzed the requirements that these universes would have to satisfy. At the moment it appears that it would be sufficient to use ``small'' universes whose existence can be proved in ZF. 

At the same time as complying with the requirements imposed by ZF we made an effort to ensure that the paper can be translated into the UniMath language. 

The paper is written in the formalization-ready style, that is, in such a way that no long arguments are hidden even when they are required only to substantiate an assertion that may feel obvious to readers who are closely associated with a particular tradition of mathematical thought. 

As a result, a number of lemmas, especially in the appendices, may be well know to many readers. Their proofs are nevertheless included to comply with the requirements of the formalization ready style. 

On the other hand, not all preliminary lemmas are included or a reference to a complete proof is given. There are some, but very much fewer than is usual in today's papers, exceptions. 

The concept of ``a family'' can be formalized as suggested in \cite[Remark 3.9]{fromunivwithPiI}. 

The main results of this paper are not theorems but constructions and so are many of the intermediate results. Because of the importance of constructions for this paper we use a special pair of names Problem-Construction for the specification of the goal of a construction and the description of the particular solution.

In the case of a Theorem-Proof pair one usually refers (by name or number) to the theorem when  using the proof of this theorem. This is acceptable in the case of theorems because the future use of their proofs is such that only the fact that there is a proof but not the particulars of the proof matter. 

In the case of a Problem-Construction pair the content of the construction often matters in the future use. Because of this we have to refer to the construction and not to the problem and we assign in this paper numbers both to Problems and to Constructions. 

Acknowledgements are at the end of the paper. 

\newpage

\section{Presheaves $\Ob_n$ and $\wt{\Ob_n}$}

\subsection{Functor $Sig$ and functor isomorphisms $S\Ob_n$ and $S\wt{Ob}_n$}
\llabel{Sec.1}
Let $CC$ be a C-system. Presheaves $\Ob_n$ and $\wOb_n$ on $CC$ where defined in \cite[Section 3]{fromunivwithPiI}. On objects they are given by
\begin{eq}
\llabel{2016.11.15.eq5}
\Ob_n(\Gamma)=\{T\in Ob(CC)\,|\,l(T)=l(\Gamma)+n,\, ft^n(T)=\Gamma\}
\end{eq}
\begin{eq}
\llabel{2016.11.15.eq6}
\wOb_n(\Gamma)=\{o\in Mor(CC)\,|\,codom(o)\in\Ob_n(\Gamma),o\in sec(p_{codom(o)}),codom(o)>\Gamma\}
\end{eq}
where for a morphism $p:Y\sr X$ we set
$$sec(p)=\{s\in Mor(X,Y)\,|\,s\circ p=Id_X\}$$
Elements of $sec(p)$ are called sections of $p$. 

On morphisms these presheaves are defined by 
$$\Ob_n(f)(T)=f^*(T)$$
$$\wOb_n(o)=f^*(o)$$
where $f^*(T)$ is defined in \cite[above Lemma 2.4]{fromunivwithPiI} and $f^*(o)$ is defined in  \cite[Lemma 2.13]{fromunivwithPiI}.

For $o\in\wOb_n(\Gamma)$ we write $\partial_{n,\Gamma}(o)$ for $codom(o)$. We will often omit the indexes $n$ and $\Gamma$ at $\partial$. The family of functions $\partial_{n,\Gamma}$ forms a morphism of presheaves $\partial_n:\wOb_n\sr \Ob_n$. 

In this section we consider three constructions that apply to any C-system $CC$ -- a functor $Sig:PreShv(CC)\sr PreShv(CC)$ and two families of isomorphisms paramerized by $n\in\nat$:
$$S\Ob_n:Sig(\Ob_n)\sr \Ob_{n+1}$$
and
$$S\wt{\Ob}_n:Sig(\wt{\Ob}_n)\sr \wt{Ob}_{n+1}$$
such that $S\wOb_n\circ \partial_{n+1}=Sig(\partial_n)\circ S\Ob_n$.

Let ${\cal G}$ be a presheaf on $CC$. For $\Gamma\in CC$ we set
\begin{eq}
\llabel{2016.08.30.eq7}
Sig({\cal G})(\Gamma)=\amalg_{T\in Ob_1(\Gamma)}{\cal G}(T)
\end{eq}
and for $f:\Gamma'\sr \Gamma$ 
\begin{eq}
\llabel{2016.08.30.eq8}
Sig({\cal G})(f)(T,g)=(f^*(T),{\cal G}(q(f,T))(T))
\end{eq}
\begin{lemma}
\llabel{2016.08.28.l1}
The presheaf data $Sig$ is a presheaf, that is, one has:
\begin{enumerate}
\item for $\Gamma\in CC$, 
$$Sig({\cal G})(Id_{\Gamma})=Id_{Sig({\cal G})(\Gamma)}$$
\item for $f':\Gamma''\sr \Gamma'$, $f:\Gamma'\sr \Gamma$,
$$Sig({\cal G})(f'\circ f)=Sig({\cal G})(f)\circ Sig({\cal G})(f')$$
\end{enumerate}
\end{lemma}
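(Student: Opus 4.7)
The plan is to verify the two presheaf axioms by unwinding the definition \eqref{2016.08.30.eq8} on an arbitrary element $(T,g)$ of the coproduct $\amalg_{T\in\Ob_1(\Gamma)}{\cal G}(T)$ and reducing to two standard facts: the behavior of the pullback operation $f^*$ and the canonical morphism $q(f,T)$ under identities and composition in the C-system $CC$, together with the presheaf axioms satisfied by ${\cal G}$ itself.

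For part (1), I would compute $Sig({\cal G})(Id_\Gamma)(T,g)=(Id_\Gamma^*(T),\,{\cal G}(q(Id_\Gamma,T))(g))$. The C-system axioms give $Id_\Gamma^*(T)=T$ and $q(Id_\Gamma,T)=Id_T$, and the presheaf axiom ${\cal G}(Id_T)=Id_{{\cal G}(T)}$ then yields $(T,g)$, as required. For part (2), with $f':\Gamma''\sr \Gamma'$ and $f:\Gamma'\sr \Gamma$, I would compute both sides on $(T,g)$. The left-hand side is, by definition,
$$Sig({\cal G})(f'\circ f)(T,g)=\big((f'\circ f)^*(T),\,{\cal G}(q(f'\circ f,T))(g)\big).$$
For the right-hand side I would first apply $Sig({\cal G})(f)$ to obtain $(f^*(T),\,{\cal G}(q(f,T))(g))$, and then apply $Sig({\cal G})(f')$ to this pair, giving
$$\big((f')^*(f^*(T)),\,{\cal G}(q(f',f^*(T)))({\cal G}(q(f,T))(g))\big).$$

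To identify the two sides it is enough to invoke the two compatibility properties of the C-system structure: $(f'\circ f)^*(T)=(f')^*(f^*(T))$ and $q(f'\circ f,T)=q(f',f^*(T))\circ q(f,T)$. Applying the functoriality of ${\cal G}$ to the latter equality (remembering that ${\cal G}$ is contravariant, so that in the diagrammatic convention of this paper ${\cal G}(h\circ k)={\cal G}(k)\circ {\cal G}(h)$) rewrites ${\cal G}(q(f'\circ f,T))(g)$ as ${\cal G}(q(f',f^*(T)))({\cal G}(q(f,T))(g))$, which matches the second component of the right-hand side. This yields the desired equality.

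There is no real obstacle here: the lemma is a bookkeeping verification, and the only mild pitfall is keeping the diagrammatic order of composition consistent when transporting the functoriality of ${\cal G}$ across the iterated $q$'s. Once the two C-system identities for $f^*$ and $q$ under identity and composition are cited (they are basic to the definition of a C-system as recalled in \cite{Csubsystems}), both axioms collapse to direct substitution.
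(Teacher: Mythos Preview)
Your proposal is correct and follows essentially the same approach as the paper: both proofs evaluate $Sig({\cal G})$ on an arbitrary $(T,g)$, invoke the C-system identities $Id_\Gamma^*(T)=T$, $q(Id_\Gamma,T)=Id_T$, $(f'\circ f)^*(T)=(f')^*(f^*(T))$, $q(f'\circ f,T)=q(f',f^*(T))\circ q(f,T)$, and then apply the presheaf axioms of ${\cal G}$. The only cosmetic difference is that the paper writes out the chain of equalities for part (2) in a single display, whereas you separate the two sides and compare them at the end.
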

\begin{proof}
For the identity we have
$$Sig({\cal G})(Id_{\Gamma})(T,g)=(Id_{\Gamma}^*(T), {\cal G}(q(Id_{\Gamma},T))(g))=(T,g)$$
where the second equality is by axioms of the C-system structure. For the composition we have
$$Sig({\cal G})(f')(Sig({\cal G}(f)(T,g)))=
Sig({\cal G})(f')(f^*(T), {\cal G}(q(f,T))(g))=$$$$
((f')^*(f^*(T)), {\cal G}(q(f',f^*(T)))({\cal G}(q(f,T))(g)))=
((f')^*(f^*(T)), {\cal G}(q(f',f^*(T))\circ q(f,T))(g))=$$$$
((f'\circ f)^*(T), {\cal G}(q(f'\circ f,T))(g))=
Sig({\cal G})(f'\circ f)(T,g)$$
where the first two equalities are by definition of $Sig({\cal G})$, the third by the composition property of $\cal G$, the fourth by the axioms of the C-system structure and the fifth again by the definition of $Sig({\cal G})$. This completes the proof of Lemma \ref{2016.08.28.l1}.
\end{proof}

One defines $Sig$ on morphisms of presheaves $r:{\cal G}\sr {\cal G}'$ by the family of morphisms
\begin{eq}
\llabel{2016.08.30.eq9}
Sig(r)_{\Gamma}(T,g)=(T,r_{T}(g))
\end{eq}
For $r:{\cal G}\sr {\cal G}'$ and $f:\Gamma'\sr \Gamma$, we have
$$Sig({\cal G})(f)\circ Sig(r)_{\Gamma'}=Sig(r)_{\Gamma}\circ Sig({\cal G}')(f)$$
that is, the family of functions $Sig(r)_{\Gamma}$ parametrized by $\Gamma\in CC$ is a morphism of presheaves.

For ${\cal G}\in PreShv(CC)$ we have 
\begin{eq}
\llabel{2016.12.14.eq1}
Sig(Id_{\cal G})_{\Gamma}(T,g)=(T,(Id_{\cal G})_{T}(g))=(T,g)
\end{eq}
and for $r:{\cal G}\sr {\cal G}'$, $r':{\cal G}'\sr {\cal G}''$ we have
\begin{eq}
\llabel{2016.12.14.eq2}
Sig(r\circ r')_{\Gamma}(T,g)=(T,(r\circ r')_{T}(g))=(T,r'_{T}(r_{T}(g)))=Sig(r')(Sig(r)(T,g))
\end{eq}
These two equalities show that the functor data given by $Sig$ on presheaves and $Sig$ on morphisms of presheaves is a functor that we also denote by 
$$Sig:PreShv(CC)\sr PreShv(CC)$$
\begin{remark}\rm
\llabel{2016.12.14.rem1}
The construction of $Sig$ works in more general setting than presheaves.

Indeed, for any family of sets $G(\Gamma)$ parametrized by $\Gamma\in CC$ the formula (\ref{2016.08.30.eq7}) defines a new family of sets $Sig(G)(\Gamma)$ also parametrized by $\Gamma\in CC$. For any two families $G,G'$ and a family of functions $r_{\Gamma}:G({\Gamma})\sr G'({\Gamma})$ the formula (\ref{2016.08.30.eq9}) defines a family of functions $Sig(r)_{\Gamma}:Sig(G)(X)\sr Sig(G')(X)$. The  properties (\ref{2016.12.14.eq1}) and (\ref{2016.12.14.eq2}) hold in this more general setting.

We can also define $Sig(G)$ for any presheaf data, that is, for any pair consisting of a family $G(\Gamma)$ of sets parametrized by $\Gamma\in CC$ and a family of functions $G(f):G(\Gamma)\sr G(\Gamma')$ parametrized by $f:\Gamma'\sr \Gamma$ in $Mor(CC)$. For this we can  again use formulas  (\ref{2016.08.30.eq7}) and (\ref{2016.08.30.eq8}). 

If $r_{\Gamma}:G({\Gamma})\sr G'({\Gamma})$ is a morphism of functor data, that is functions $r_{*}$ commute with functions $G(*)$, then $Sig(r)$ is a morphism of functor data as well.
\end{remark}
\begin{problem}
\llabel{2016.08.30.prob1}
For $n\ge 0$ to construct an isomorphism of presheaves
$$S\Ob_n:Sig(\Ob_n)\sr \Ob_{n+1}$$
\end{problem}
In constructing a solution of this problem and other problems where one needs to build an of isomorphism of presheaves we will use the following lemma that is often applied without an explicit reference.
\begin{lemma}
\llabel{2016.11.14.l1}
Let $\Phi,\Phi':{\cal C}\sr {\cal D}$ be functors and $\phi:\Phi\sr \Phi'$ a natural transformation. Then $\phi$ is an isomorphism of functors if and only if for all objects $X$ of $\cal C$ the morphism $\phi_X:\Phi(X)\sr \Phi'(X)$ is an isomorphism in $\cal D$.

When this condition is satisfied, the inverse isomorphism is formed by the family of morphisms $\phi^{-1}_{X}=(\phi_X)^{-1}$.
\end{lemma}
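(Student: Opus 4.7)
The plan is a completely standard two-direction argument, with the only real content being the naturality of the pointwise inverse family.

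For the ``only if'' direction, suppose $\phi$ admits an inverse natural transformation $\psi:\Phi'\sr \Phi$, so that $\phi\circ\psi=Id_{\Phi}$ and $\psi\circ\phi=Id_{\Phi'}$. Composition and identities of natural transformations are computed componentwise, so evaluating at any object $X$ gives $\phi_X\circ\psi_X=Id_{\Phi(X)}$ and $\psi_X\circ\phi_X=Id_{\Phi'(X)}$, which exhibits $\phi_X$ as an isomorphism with inverse $\psi_X$.

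For the ``if'' direction, assume that for every $X$ the morphism $\phi_X$ is an isomorphism in $\cal D$, and define $\psi_X:=(\phi_X)^{-1}$. First I verify that the family $\psi_X$ is a natural transformation $\Phi'\sr \Phi$: for any morphism $f:X\sr Y$ of $\cal C$, naturality of $\phi$ gives
$$\Phi(f)\circ \phi_Y=\phi_X\circ \Phi'(f).$$
Pre-composing both sides with $\psi_X$ and post-composing with $\psi_Y$ (and using $\psi_X\circ \phi_X=Id_{\Phi'(X)}$ and $\phi_Y\circ \psi_Y=Id_{\Phi(Y)}$ in diagrammatic order) yields
$$\psi_X\circ \Phi(f)=\Phi'(f)\circ \psi_Y,$$
which is exactly the naturality square for $\psi$. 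Then the equalities $\phi_X\circ \psi_X=Id_{\Phi(X)}$ and $\psi_X\circ \phi_X=Id_{\Phi'(X)}$, holding for every $X$ by construction, assemble into the global identities $\phi\circ \psi=Id_{\Phi}$ and $\psi\circ \phi=Id_{\Phi'}$ of natural transformations, since such identities are checked pointwise.

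This simultaneously establishes both directions and gives the explicit description $\phi^{-1}_X=(\phi_X)^{-1}$ of the inverse. The only step that is not a tautological unfolding of the componentwise definition of composition of natural transformations is the derivation of the naturality square for the inverse family, which is the slightly delicate point in the formalization-ready style since one must invoke the correct pre/post-composition of the naturality square of $\phi$ with the appropriate two-sided inverses; everything else reduces to component-by-component checking.
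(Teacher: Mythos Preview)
Your proof is correct and follows essentially the same approach as the paper's own proof: both directions are handled by unfolding the componentwise definition of composition and identity of natural transformations, with the only substantive step being the derivation of the naturality square for the inverse family by pre- and post-composing the naturality square of $\phi$ with the appropriate inverses. The paper additionally remarks explicitly that the identity natural transformation is given componentwise by $Id_{\Phi(X)}$, which you absorb into the phrase ``composition and identities of natural transformations are computed componentwise''; otherwise the arguments coincide.
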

\begin{proof}
One should first verify that identity isomorphism of functors is formed by the family $Id_{\Phi(X)}$. This is immediate from the definitions.

If $\phi$ is an isomorphism as above and $\phi^{-1}$ is its inverse, then the composition of these two morphisms is $Id_{\Phi(X)}$ and since it consists of the identity morphisms of the objects $\Phi(X)$ we condclude that the morphisms $\phi^{-1}_X$ form inverses to the morphisms $\phi_X$. This proves the ``only if part''. 

If all morphisms $\phi_X$ are isomorphisms then the family $(\phi_X)^{-1}$ forms a morphism of presheaves $\phi^{-1}:\Phi'\sr \Phi$. Indeed, for $f:X\sr Y$ one has
$$\phi^{-1}_X\circ \Phi(f)=\Phi'(f)\circ \phi^{-1}_Y$$
This equality follows by taking its composition with $\phi_X$ on the left and $\phi_Y$ on the right. That $\phi^{-1}$ is both the left and the right inverse to $\phi$ is immediate from its definition. This proves the ``if'' part. 
\end{proof}
We will also need the following lemma.  
\begin{lemma}
\llabel{2016.09.01.l1}
Let $\Gamma\in CC$. Then one has:
\begin{enumerate}
\item if $T\in\Ob_1(\Gamma)$ and $X\in\Ob_n(T)$ then $X\in\Ob_{n+1}(\Gamma)$,
\item if $X\in \Ob_{n+1}(\Gamma)$ then $ft^n(X)\in \Ob_1(\Gamma)$ and $X\in \Ob_n(ft^n(X))$.
\end{enumerate}
\end{lemma}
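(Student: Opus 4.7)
The proof will be a direct verification from the definition \eqref{2016.11.15.eq5}, using only the two basic facts about a C-system that $l(ft(Y))+1 = l(Y)$ whenever $l(Y)>0$ and that $ft^{a+b} = ft^a \circ ft^b$. In both parts the key is to bookkeep how the length function $l$ and the iterated father functor $ft^n$ propagate through the assumptions.

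For part (1), assume $T \in \Ob_1(\Gamma)$ and $X \in \Ob_n(T)$. Unfolding the definitions, we have $l(T)=l(\Gamma)+1$, $ft(T)=\Gamma$, $l(X)=l(T)+n$, and $ft^n(X)=T$. Then $l(X)=l(T)+n=l(\Gamma)+(n+1)$, and $ft^{n+1}(X)=ft(ft^n(X))=ft(T)=\Gamma$, so $X \in \Ob_{n+1}(\Gamma)$.

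For part (2), assume $X \in \Ob_{n+1}(\Gamma)$, so $l(X)=l(\Gamma)+(n+1)$ and $ft^{n+1}(X)=\Gamma$. Set $T=ft^n(X)$. To verify $T \in \Ob_1(\Gamma)$: applying $l$ to $ft^n(X)$ yields $l(T)=l(X)-n=l(\Gamma)+1$, and $ft(T)=ft(ft^n(X))=ft^{n+1}(X)=\Gamma$. To verify $X \in \Ob_n(T)$: $ft^n(X)=T$ holds by construction, and $l(X)=l(\Gamma)+(n+1)=l(T)+n$.

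No step is an obstacle; the only care required is to distinguish the two occurrences of the level-function equality and to apply $ft^{n+1} = ft \circ ft^n$ in the correct direction in each of the two parts. Everything else is immediate from \eqref{2016.11.15.eq5}.
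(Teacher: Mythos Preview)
Your proof is correct and follows essentially the same approach as the paper's own proof: both verify the length and $ft$ conditions directly from the definition (\ref{2016.11.15.eq5}). The only minor difference is that the paper explicitly notes $l(X)\ge n$ before writing $l(ft^n(X))=l(X)-n$, whereas you leave this implicit; but since $l(X)=l(\Gamma)+(n+1)\ge n$ is immediate, this is not a gap.
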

\begin{proof}
The first assertion follows from the equalities $l(X)=l(T)+n=l(\Gamma)+1+n$ and $ft^{n+1}(X)=ft(ft^n(X))=ft(T)=\Gamma$. 

To prove the second assertion let $X\in\Ob_{n+1}(\Gamma)$. Since $l(X)\ge n$ we have $l(ft^n(X))=l(X)-n=l(\Gamma)+(n+1)-n=l(\Gamma)+1$. The equality $ft^1(ft^n(X))=ft^{n+1}(X)=\Gamma$ is obvious and we conclude that $ft^n(X)\in\Ob_1(\Gamma)$. Next, again because $l(X)\ge n$, we have $l(X)=l(ft^n(X))+n$ and since $ft^n(X)=ft^n(X)$ we have that  $X\in\Ob_n(ft^n(X))$. 
\end{proof}
\begin{construction}\rm
\llabel{2016.08.30.constr1}
Let $\Gamma\in CC$. Then $Sig(\Ob_n)(\Gamma)$ is the set of pairs $(T,X)$ where $T\in\Ob_1(\Gamma)$ and $X\in \Ob_n(T)$. By Lemma \ref{2016.09.01.l1}(1), the formula 
\begin{eq}
\llabel{2016.09.01.eq4}
S\Ob_{n,\Gamma}(T,X)=X
\end{eq}
defines a function $Sig(\Ob_n)(\Gamma)\sr \Ob_{n+1}(\Gamma)$.

Conversely, by Lemma \ref{2016.09.01.l1}(2), the formula 
\begin{eq}
\llabel{2016.09.01.eq5}
S\Ob^{-1}_{n,\Gamma}(X)=(ft^n(X),X)
\end{eq}
defines a function $\Ob_{n+1}(\Gamma)\sr Sig(\Ob_n)(\Gamma)$.

If $\Phi=S\Ob_{n,\Gamma}$ and $\Psi=S\Ob^{-1}_{n,\Gamma}$ then 
$$\Phi(\Psi(X))=\Phi((ft^n(X),X))=X$$
and
$$\Psi(\Phi(T,X))=\Psi(X)=(ft^n(X),X)=(T,X)$$
where the last equality follows from the equality $T=ft^n(X)$. We conclude that $S\Ob_{n,\Gamma}$ and $S\Ob^{-1}_{n,\Gamma}$ are mutually inverse bijections.

In view of Lemma \ref{2016.11.14.l1}, it remains to verify that the family of bijections $S\Ob_{n,\Gamma}$ parametrized by $\Gamma\in CC$ is a morphism of presheaves, that is, that for any $f:\Gamma'\sr\Gamma$ and $(T,X)\in Sig(\Ob_n)(\Gamma)$ we have
\begin{eq}
\llabel{2016.08.30.eq10}
\Ob_{n+1}(f)(S\Ob_{n,\Gamma}((T,X)))=S\Ob_{n,\Gamma'}(Sig(\Ob_n)(f)((T,X)))
\end{eq}
Computing we get
$$\Ob_{n+1}(f)(S\Ob_{n,\Gamma}((T,X)))=f^*(X)$$
$$S\Ob_{n,\Gamma'}(Sig(\Ob_n)(f)((T,X)))=S\Ob_{n,\Gamma'}(f^*(T),q(f,T)^*(X))=q(f,T)^*(X)$$
and (\ref{2016.08.30.eq10}) follows from \cite[Lemma 2.7]{fromunivwithPiI}. This completes Construction \ref{2016.08.30.constr1}.
\end{construction}
As a corollary of Construction \ref{2016.08.30.constr1} and Lemma \ref{2016.11.14.l1} we obtain the fact that the family of functions (\ref{2016.09.01.eq5})  parametrized by $\Gamma\in CC$ is an isomorphism of presheaves that is inverse to $S\Ob_n$.

We proceed now to the construction of isomorphisms $S\wt{Ob}_n$. 

Recall from \cite[Sec. 3]{Csubsystems}, that $\wt{Ob}(CC)$ is the set of elements $o\in Mor(CC)$ such that $o\in sec(p_{codom(o)})$ and $l(codom(o))>0$. For such elements we also denote $codom(o)$ by $\partial(o)$. 

It follows easily from (\ref{2016.11.15.eq6}) that for $\Gamma\in Ob(CC)$ and $n>0$ one has $o\in\wt{\Ob}_n(\Gamma)$ if and only if $o\in\wt{Ob}(CC)$ and $\partial(o)\in \Ob_n(\Gamma)$. It also follows from (\ref{2016.11.15.eq6}) that $\Ob_0(\Gamma)=\emptyset$. 
\begin{problem}
\llabel{2016.08.30.prob2}
For $n\ge 1$ to construct an isomorphism of presheaves
$$S\wt{\Ob}_n:Sig(\wt{\Ob}_n)\sr \wt{\Ob}_{n+1}$$
\end{problem}
\begin{lemma}
\llabel{2016.11.18.l1}
Let $\Gamma\in CC$. Then one has:
\begin{enumerate}
\item if $T\in\Ob_1(\Gamma)$ and $o\in\wt{\Ob}_n(T)$ then $o\in\wt{\Ob}_{n+1}(\Gamma)$,
\item if $o\in \wt{\Ob}_{n+1}(\Gamma)$ then $ft^n(\partial(o))\in \Ob_1(\Gamma)$ and $o\in \wt{\Ob}_n(ft^n(\partial(o)))$.
\end{enumerate}
\end{lemma}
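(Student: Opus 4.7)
The plan is to reduce both parts of Lemma \ref{2016.11.18.l1} directly to the corresponding parts of Lemma \ref{2016.09.01.l1} applied to $\partial(o)$, using the characterization stated just above Problem \ref{2016.08.30.prob2}: for $k > 0$ one has $o \in \wt{\Ob}_k(\Gamma)$ if and only if $o \in \wt{Ob}(CC)$ and $\partial(o) \in \Ob_k(\Gamma)$. The key observation that makes the reduction work is that membership in $\wt{Ob}(CC)$ depends only on $o$ itself (it says that $o$ is a section of $p_{codom(o)}$ with $l(codom(o)) > 0$) and not on any chosen base object, so this condition transports freely between different bases.

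For part (1), I would assume $T \in \Ob_1(\Gamma)$ and $o \in \wt{\Ob}_n(T)$ with $n \geq 1$, and extract from the characterization the two pieces $o \in \wt{Ob}(CC)$ and $\partial(o) \in \Ob_n(T)$. Applying Lemma \ref{2016.09.01.l1}(1) with $X = \partial(o)$ and the given $T$ yields $\partial(o) \in \Ob_{n+1}(\Gamma)$. Combining this with $o \in \wt{Ob}(CC)$ and invoking the characterization at level $k = n+1 \geq 2$, we conclude $o \in \wt{\Ob}_{n+1}(\Gamma)$.

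For part (2), I would start from $o \in \wt{\Ob}_{n+1}(\Gamma)$ and extract $o \in \wt{Ob}(CC)$ together with $\partial(o) \in \Ob_{n+1}(\Gamma)$. Applying Lemma \ref{2016.09.01.l1}(2) with $X = \partial(o)$ then delivers simultaneously the first conclusion $ft^n(\partial(o)) \in \Ob_1(\Gamma)$ and the auxiliary fact $\partial(o) \in \Ob_n(ft^n(\partial(o)))$. Combining the latter with $o \in \wt{Ob}(CC)$ and the characterization at level $k = n$ (available since $n \geq 1$) gives the second conclusion $o \in \wt{\Ob}_n(ft^n(\partial(o)))$.

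There is no substantive obstacle here; the argument is entirely bookkeeping. The only point requiring mild care is to keep track of the range $n \geq 1$ so that the characterization of $\wt{\Ob}_k$ in terms of $\wt{Ob}(CC)$ and $\Ob_k$ is legitimately available at each level at which it is invoked, namely $k = n$ and $k = n+1$. Both are at least $1$ under the standing hypothesis tied to Problem \ref{2016.08.30.prob2}, so the reduction to Lemma \ref{2016.09.01.l1} is unobstructed.
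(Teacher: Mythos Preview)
Your proof is correct and is essentially identical to the paper's: both unpack the characterization of $\wt{\Ob}_k$ as $\wt{Ob}(CC)\cap\partial^{-1}(\Ob_k)$, apply the corresponding part of Lemma \ref{2016.09.01.l1} to $\partial(o)$, and then repackage. The only cosmetic difference is that the paper infers $n>0$ from the nonemptiness of $\wt{\Ob}_n(T)$ whereas you take $n\ge 1$ as a standing hypothesis from Problem \ref{2016.08.30.prob2}.
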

\begin{proof}
If $o\in\wt{\Ob}_n(T)$ we have $n>0$ an therefore $o\in\wt{Ob}(CC)$ and $\partial(o)\in \Ob_n(T)$. By Lemma \ref{2016.09.01.l1}(1) we have $\partial(o)\in \Ob_{n+1}(\Gamma)$. Therefore $o\in\wt{\Ob}_{n+1}(T)$. This proves the first assertion. 

If $o\in \wt{\Ob}_{n+1}(\Gamma)$ then $o\in\wt{Ob}(CC)$ and $\partial(o)\in \Ob_{n+1}(\Gamma)$. By Lemma \ref{2016.09.01.l1}(2) we have $ft^n(\partial(o))\in \Ob_1(\Gamma)$ and $\partial(o)\in \Ob_n(ft^n(\partial(o)))$.  Therefore $o\in\wt{\Ob}_n(ft^n(\partial(o)))$. 
\end{proof}
We can now provide a construction for Problem \ref{2016.08.30.prob2}.
\begin{construction}\rm\llabel{2016.09.01.constr2}
For $\Gamma\in CC$ we have 
$$Sig(\wt{\Ob}_n)(\Gamma)=\{(T,o)\,|\,T\in \Ob_1(\Gamma),\,\,o\in \wt{\Ob}_n(T)\}$$
For $(T,o)\in Sig(\wt{\Ob}_n)(\Gamma)$ we have $o\in\wt{\Ob}_{n+1}(\Gamma)$ by Lemma \ref{2016.11.18.l1}(1) and therefore the formula
\begin{eq}
\llabel{2016.09.01.eq6}
S\wt{\Ob}_{n,\Gamma}(T,o)=o
\end{eq}
defines a function $Sig(\wt{\Ob}_n)(\Gamma)\sr \wt{\Ob}_{n+1}(\Gamma)$.

If $o\in \wt{\Ob}_{n+1}(\Gamma)$ then by Lemma \ref{2016.11.18.l1}(2), $ft^n(\partial(o))\in \wt{Ob}_1(\Gamma)$ and $o\in \wt{Ob}_n(ft^n(\partial(o)))$. Therefore the formula
\begin{eq}
\llabel{2016.09.01.eq7}
S\wt{Ob}^{-1}_{n,\Gamma}(o)=(ft^n(\partial(o)),o)
\end{eq}
defines a function $\wt{\Ob}_{n+1}(\Gamma)\sr Sig(\wt{\Ob}_n)(\Gamma)$.

One verifies in the same way as in Construction \ref{2016.08.30.constr1} that $S\wt{\Ob}_{n,\Gamma}$ and $S\wt{Ob}^{-1}_{n,\Gamma}$ are mutually inverse bijections.

In view of Lemma \ref{2016.11.14.l1} it remains to verify that the family of functions  $S\wt{\Ob}_{n,\Gamma}$ parametrized by $\Gamma\in CC$ is a morphism of functors, that is, that for $f:\Gamma'\sr\Gamma$ and $(T,o)\in S\Ob_{n,\Gamma}$ one has
\begin{eq}
\llabel{2016.09.01.eq2b}
\wt{\Ob}_{n+1}(f)(S\wt{\Ob}_{n,\Gamma}(T,o))=S\wt{\Ob}_{n,\Gamma'}(Sig(\wt{\Ob}_n)(f)(T,o))
\end{eq}
Computing we get
$$\wt{\Ob}_{n+1}(f)(S\wt{\Ob}_{n,\Gamma}(T,o))=\wt{\Ob}_{n+1}(f)(o)=f^*(o)$$
$$S\wt{\Ob}_{n,\Gamma'}(Sig(\wt{\Ob}_n)(f)(T,o))=S\wt{\Ob}_{n,\Gamma'}(f^*(T),q(f,T)^*(o))=q(f,T)^*(o)$$
and we conclude that (\ref{2016.09.01.eq2b}) holds by \cite[Lemma 2.15]{fromunivwithPiI}.

This completes Construction \ref{2016.09.01.constr2}.
\end{construction}
As a corollary of Construction \ref{2016.09.01.constr2} and Lemma \ref{2016.11.14.l1} we obtain the fact that the family of functions (\ref{2016.09.01.eq7})  parametrized by $\Gamma\in CC$ is an isomorphism of presheaves that is inverse to $S\wt{\Ob}_n$.
\begin{lemma}
\llabel{2016.12.04.l1}
For any $n\ge 1$ the square of morphisms of presheaves
\begin{eq}
\llabel{2016.12.04.eq1}
\begin{CD}
Sig(\wt{\Ob}_n) @>S\wt{\Ob}_n>> \wt{\Ob}_{n+1}\\
@VSig(\partial) VV @VV\partial V\\
Sig(\Ob_n) @>S\Ob_n>> \Ob_{n+1}
\end{CD}
\end{eq}
commutes.
\end{lemma}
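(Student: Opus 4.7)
The plan is to reduce the check to a pointwise verification via Lemma \ref{2016.11.14.l1} and then chase an arbitrary element $(T,o) \in Sig(\wt{\Ob}_n)(\Gamma)$ around both ways. Since all four maps in the square have already been described explicitly on elements (in Constructions \ref{2016.08.30.constr1} and \ref{2016.09.01.constr2}, in formula (\ref{2016.08.30.eq9}) defining $Sig$ on morphisms, and in the definition $\partial_{n,\Gamma}(o)=codom(o)$ appearing just after (\ref{2016.11.15.eq6})), the verification is a direct computation with no ambiguous intermediate data.

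Fix $\Gamma\in CC$ and $(T,o) \in Sig(\wt{\Ob}_n)(\Gamma)$, so $T \in \Ob_1(\Gamma)$ and $o \in \wt{\Ob}_n(T)$. Going along the top and then down, we get $S\wt{\Ob}_{n,\Gamma}(T,o) = o$ by (\ref{2016.09.01.eq6}), and then $\partial_{n+1,\Gamma}(o) = codom(o)$. Going down and then along the bottom, formula (\ref{2016.08.30.eq9}) applied to the morphism of presheaves $\partial_n : \wt{\Ob}_n \sr \Ob_n$ gives $Sig(\partial_n)_\Gamma(T,o) = (T, \partial_{n,T}(o)) = (T, codom(o))$, after which (\ref{2016.09.01.eq4}) yields $S\Ob_{n,\Gamma}(T, codom(o)) = codom(o)$. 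Both paths produce $codom(o)$, so the two morphisms of presheaves agree on every element of every $Sig(\wt{\Ob}_n)(\Gamma)$.

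There is no real obstacle here; the only subtlety worth flagging is that the formula (\ref{2016.08.30.eq9}) for $Sig$ on morphisms is written component-wise as $(T,g) \mapsto (T, r_T(g))$, so when we specialize $r = \partial_n$ we must take $\partial_{n,T}$ (the component of $\partial_n$ at $T$) rather than $\partial_{n,\Gamma}$ — a point that matters conceptually but is immaterial for the outcome because $codom(o)$ does not depend on which presheaf argument we view $o$ as living over. Hence both composites coincide pointwise, and Lemma \ref{2016.11.14.l1} (or rather its trivial version for morphisms of presheaves, that two morphisms coinciding on every object coincide as natural transformations) delivers the equality of morphisms of presheaves, completing the proof of Lemma \ref{2016.12.04.l1}.
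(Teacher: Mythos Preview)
Your proof is correct and follows essentially the same approach as the paper: a pointwise chase of an element $(T,o)$ around both paths of the square, using the explicit formulas (\ref{2016.09.01.eq6}), (\ref{2016.09.01.eq4}), (\ref{2016.08.30.eq9}) and the definition of $\partial$. Your remark about $\partial_{n,T}$ versus $\partial_{n,\Gamma}$ is in fact more careful than the paper's own notation, which writes $\partial_\Gamma(o)$ in both places; as you note, this is harmless since $codom(o)$ does not depend on the indexing.
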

\begin{proof}
Let $\Gamma\in CC$. By definition we have
$$Sig(\wOb_n)(\Gamma)=\{(T,o)\,|\,T\in \Ob_1(\Gamma), o\in \wOb_n(T)\}$$
Let $(T,o)\in Sig(\wOb_n)(\Gamma)$. Then, again by definitions,
$$\partial_{\Gamma}(S\wOb_{n,\Gamma}(T,o))=\partial_{\Gamma}(o)$$
and
$$S\Ob_{n,\Gamma}(Sig(\partial)_{\Gamma}(T,o))=S\Ob_{n,\Gamma}(T,\partial_{\Gamma}(o))=\partial_{\Gamma}(o)$$
The lemma is proved. 
\end{proof}
\begin{remark}\rm
\llabel{2016.11.18.rem1}
Define $Sig^n$ by induction on $n$, setting $Sig^0=Id_{PreShv(CC)}$ and $Sig^{n+1}=Sig^n\circ Sig$. Then, also by induction on $n$,  we can construct isomorphisms 
$$S\Ob^n_m:Sig^n(\Ob_m)\sr \Ob_{n+m}$$
where $S\Ob^0_m=Id_{\Ob_m}$ and $S\Ob^{n+1}_m$ is the composition 
$$
\begin{CD}
Sig^{n+1}(\Ob_m)= Sig(Sig^n(\Ob_m)) @>Sig(S\Ob^n_m)>> Sig(\Ob_{n+m}) @>S\Ob_{n+m}>> \Ob_{n+m+1}
\end{CD}
$$
In exactly the same way we construct isomorphisms
$$S\wt{\Ob}^n_m:Sig^n(\wt{\Ob}_m)\sr \wt{\Ob}_{n+m}$$
\end{remark}

\subsection{Functor $D_p$, sets $D_p^n(X,Y)$ and the $\circ$-notation}
\llabel{Sec.2}

In this section we work in the context of a category $\cal C$ with a universe $p$. The goal of the section is to construct, for any such pair, a functor
$$D_p:PreShv({\cal C})\sr PreShv({\cal C})$$
The definition of a universe in a category was given in \cite[Definition 2.1]{Cfromauniverse}. We repeat it here for the convenience of the reader.
\begin{definition}
\llabel{2009.11.1.def1}
Let $\cal C$ be a category. A universe structure on a morphism $p:\wt{U}\sr U$ in $\cal C$ is a mapping that assigns to any morphism $f:X\sr U$ in $\cal C$ a pullback of the form
\begin{eq}
\llabel{2016.12.02.eq8}
\begin{CD}
(X,f) @>Q(F)>>  \wt{U}\\
@Vp_{X,F}VV  @VVpV\\
X @>F>> U
\end{CD}
\end{eq}
A universe in $\cal C$ is a morphism together with a universe structure on it. 
\end{definition}
We usually refer to a universe by the name of the corresponding morphism without mentioning the choices of pullbacks explicitly. To shorten the notation we will write $p_F$ instead of $p_{X,F}$. 

For $f:W\sr X$ and $g:W\sr \wt{U}$ such that $f\circ F=g\circ p$ we will denote by $f*_Fg$ the unique morphism  $W\sr (X;F)$ such that 
\begin{eq}
\llabel{2016.11.10.eq1a}
(f*_Fg)\circ p_{F}=f
\end{eq}
\begin{eq}
\llabel{2016.11.10.eq1b}
(f*_Fg)\circ Q(F)=g
\end{eq}
For $X'\stackrel{f}{\sr}X\stackrel{F}{\sr}U$ we let $Q(f,F)$ denote the morphism
\begin{eq}
\llabel{2016.12.02.eq4}
Q(f,F)=(p_{f\circ F}\circ f)*_FQ(f\circ F):(X';f\circ F)\sr (X;F)
\end{eq}
Observe that one has
\begin{eq}
\llabel{2016.08.24.eq4}
Q(f\circ F)=Q(f,F)\circ Q(F)
\end{eq}
\begin{eq}
\llabel{2016.08.26.eq2}
Q(Id_X,F)=Id_{(X;F)}
\end{eq}
\begin{eq}
\llabel{2016.08.26.eq3}
Q(f'\circ f,F)=Q(f',f\circ F)\circ Q(f,F)
\end{eq}
where the first equality follows directly from the definition, the second from the definition and the uniqueness of the morphisms $f*_Fg$ satisfying (\ref{2016.11.10.eq1a}) and (\ref{2016.11.10.eq1b}) and the third is proved in \cite[Lemma 2.5]{Cfromauniverse}.

Let us fix a category $\cal C$ and a universe $p$ in it. 

For any ${\cal G}\in PreShv({\cal C})$ we define functor data $D_p({\cal G})$ given on objects by
\begin{eq}
\llabel{2016.08.30.eq4}
D_p({\cal G})(X) := \amalg_{F:X\sr U}{\cal G}((X;F))
\end{eq}
and on morphisms by
\begin{eq}
\llabel{2016.08.30.eq5}
D_p({\cal G})(f):(F,\gamma)\mapsto (f\circ F, {\cal G}(Q(f,F))(\gamma))
\end{eq}
\begin{lemma}
\llabel{2016.09.07.l1}
The functor data $D_p({\cal G})$ specified above is a presheaf, i.e., one has
\begin{enumerate}
\item for any $X\in {\cal C}$, $D_p({\cal G})(Id_X)=Id_{D_p({\cal G})(X)}$,
\item for any $f:X\sr Y$, $g:Y\sr Z$ in $\cal C$,
$$D_p({\cal G})(f\circ g)=D_p({\cal G})(g)\circ D_p({\cal G})(f)$$
\end{enumerate}
\end{lemma}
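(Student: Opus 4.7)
The plan is a direct verification parallel in structure to the proof of Lemma \ref{2016.08.28.l1} for the functor $Sig$. The two essential inputs are the identity and associativity properties of the $Q(f,F)$ construction, namely equations (\ref{2016.08.26.eq2}) and (\ref{2016.08.26.eq3}), combined with the fact that $\cal G$ is itself a presheaf, i.e., sends identities to identities and reverses composition. No genuine obstacle is expected; the only subtlety is carefully tracking the diagrammatic composition order.

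For part (1), I would take an arbitrary pair $(F,\gamma)\in D_p({\cal G})(X)$, with $F:X\sr U$ and $\gamma\in{\cal G}((X;F))$, and compute directly from (\ref{2016.08.30.eq5}):
$$D_p({\cal G})(Id_X)(F,\gamma)=(Id_X\circ F,\,{\cal G}(Q(Id_X,F))(\gamma)).$$
The first coordinate collapses to $F$ by the identity axiom in $\cal C$, and by (\ref{2016.08.26.eq2}) we have $Q(Id_X,F)=Id_{(X;F)}$, so ${\cal G}(Q(Id_X,F))={\cal G}(Id_{(X;F)})=Id_{{\cal G}((X;F))}$ because $\cal G$ is a presheaf. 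The output is therefore $(F,\gamma)$, as required.

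For part (2), let $(F,\gamma)\in D_p({\cal G})(Z)$ with $F:Z\sr U$. Unwinding the left-hand side gives
$$D_p({\cal G})(f\circ g)(F,\gamma)=((f\circ g)\circ F,\,{\cal G}(Q(f\circ g,F))(\gamma)).$$
For the right-hand side I first apply $D_p({\cal G})(g)$, obtaining $(g\circ F,\,{\cal G}(Q(g,F))(\gamma))$, and then apply $D_p({\cal G})(f)$, obtaining
$$(f\circ(g\circ F),\,{\cal G}(Q(f,g\circ F))({\cal G}(Q(g,F))(\gamma))).$$
The first coordinates agree by associativity in $\cal C$. For the second coordinates I invoke (\ref{2016.08.26.eq3}) with $f'$ replaced by $f$ and $f$ replaced by $g$, which yields $Q(f\circ g,F)=Q(f,g\circ F)\circ Q(g,F)$. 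Applying the presheaf composition law for $\cal G$ to this equality gives
$${\cal G}(Q(f\circ g,F))={\cal G}(Q(g,F))\circ {\cal G}(Q(f,g\circ F))$$
(composition in diagrammatic order), and evaluating at $\gamma$ matches the second coordinate obtained from the right-hand side. This completes the verification.

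Since the argument is entirely a direct substitution guided by (\ref{2016.08.26.eq2}) and (\ref{2016.08.26.eq3}), there is no genuine difficulty; the proof should be presented in the same style as that of Lemma \ref{2016.08.28.l1}, with explicit annotation of which equality justifies each step so as to remain formalization-ready.
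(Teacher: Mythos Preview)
Your proposal is correct and follows essentially the same approach as the paper's own proof: a direct computation on an arbitrary element $(F,\gamma)$, with part (1) reduced to (\ref{2016.08.26.eq2}) plus the identity axiom for $\cal G$, and part (2) reduced to (\ref{2016.08.26.eq3}) plus the composition axiom for $\cal G$. The paper presents the same chain of equalities; your extra care in matching the variable names of (\ref{2016.08.26.eq3}) and in tracking the diagrammatic composition order is appropriate and does not diverge from the paper's argument.
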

\begin{proof}
For the first property we have
$$D_p({\cal G})(Id_X)((F,\gamma))=(Id_X\circ F, {\cal G}(Q(Id_X,F))(\gamma))=(F, \gamma)$$
where the second equality is by (\ref{2016.08.26.eq2}) and the identity morphism axiom for  the presheaf ${\cal G}$.

For the second one we have
$$D_p({\cal G})(f\circ g)(F,\gamma)=(f\circ g\circ F,{\cal G}(Q(f\circ g,F))(\gamma)))=$$$$
(f\circ g\circ F, {\cal G}(Q(f,g\circ F)\circ Q(g,F))(\gamma))=
(f\circ (g\circ F), {\cal G}(Q(f,g\circ F))({\cal G}(Q(g,F))(\gamma)))=$$
$$D_p({\cal G})(f)(D_p({\cal G})(g)(F,\gamma))=(D_p({\cal G})(g)\circ D_p({\cal G})(f))(F,\gamma)$$
where the second equality is by (\ref{2016.08.26.eq3}) and the third one by the composition axiom of the presheaf $\cal G$.
\end{proof}

One defines $D_p$ on morphisms of presheaves $r:{\cal G}\sr {\cal G}'$ by the family of morphisms 
\begin{eq}
\llabel{2016.08.30.eq6}
D_p(r)_X(F,\gamma)=(F,r_{(X;F)}(\gamma))
\end{eq}
For $f:X\sr X'$ and $r:{\cal G}\sr {\cal G}'$ we have
\begin{eq}
\llabel{2016.11.14.eq2}
D_p({\cal G})(f)\circ D_p(r)_X=D_p(r)_{X'}\circ D_p({\cal G}')(f)
\end{eq}
that is, the family of functions $D_p(r)_X$ parametrized by $X\in {\cal C}$ is a morphism of presheaves. 

For ${\cal G}\in PreShv({\cal C})$ we have 
%
$$D_p(Id_{{\cal G}})_X=Id_{D_p({\cal G})(X)}$$
and for $r:{\cal G}\sr {\cal G}'$ and $r':{\cal G}'\sr {\cal G}''$ we have 
\begin{eq}
\llabel{2016.12.18.eq4}
D_p(r\circ r')_X=D_p(r)_X\circ D_p(r')_X
\end{eq}
These two equalities show that the functor data given by $D_p$ on presheaves and $D_p$ on morphisms of presheaves is a functor that we also denote by
$$D_p:PreShv({\cal C})\sr PreShv({\cal C})$$
Note that for the presheaves of the form $Yo(A)$, where $Yo$ is the Yoneda embedding, we have
\begin{eq}
\llabel{2016.11.14.eq4}
D_p(Yo(A))(X)=\amalg_{F:X\sr U}Mor_{\cal C}((X;F),A)
\end{eq} 
and for a morphism $f:X\sr X'$,
\begin{eq}
\llabel{2016.11.14.eq4a}
D_p(Yo(A))(f)(F_1,F_2)=(f\circ F_1, Q(f,F_1)\circ F_2)
\end{eq}
For a morphism $a:A'\sr A$ we have
\begin{eq}
\llabel{2016.12.02.eq6}
D_p(Yo(a))_X(F_1,F_2)=(F_1,F_2\circ a)
\end{eq}
Define 
\begin{eq}
\llabel{2016.12.24.eq1}
D_p^n(X,Y)=D_p^{n}(Yo(Y))(X)
\end{eq}
such that in particular one has
$$D_p^0(X,Y)=Mor_{\cal C}(X,Y)$$
Since $D_p^n(Yo(Y))$ is a presheaf we have, for any $f:X'\sr X$, the function 
$$D_p^n(Yo(Y))(f):D_p^n(Yo(Y))(X)\sr D_p^n(Yo(Y))(X')$$
that we denote by 
\begin{eq}
\llabel{2016.12.24.eq2}
D_p^n(f,Y):D_p^n(X,Y)\sr D_p^n(X',Y)
\end{eq}
Since $D_p^n$ and $Yo$ are functors we have, for any $g:Y\sr Y'$, a function
$$D_p^n(Yo(g))_X:D_p^n(Yo(Y))(X)\sr D_p^n(Yo(Y'))(X)$$
that we denote by
\begin{eq}
\llabel{2016.12.24.eq3}
D_p^n(X,g):D_p^n(X,Y)\sr D_p^n(X,Y')
\end{eq}
Let $d\in D_p^n(X,Y)$. For $f:X'\sr X$ we let $f\circ^n d$ denote $D_p^n(f,Y)(d)$. For $g:Y\sr Y'$ we let $d \,^n\!\circ g$ denote $D_p^n(X,g)(d)$. When no confusion is possible we will abbreviate both $\circ^n$ and $^n\circ$ to $\circ$.

Let us summarize, using this ``$\circ$-notation'' some of the results proved above in the following lemma.
\begin{lemma}
\llabel{2017.01.07.l1}
For $d\in D_p^n(X,Y)$ we have the following formulas:
\begin{enumerate}
\item $Id_X\circ d=d$,
\item $(f'\circ f)\circ d=f'\circ (f\circ d)$,
\item $d\circ Id_Y=d$,
\item $d\circ (g\circ g')=(d\circ g)\circ g'$,
\item $f\circ (d\circ g)=(f\circ d)\circ g$.
\end{enumerate}
\end{lemma}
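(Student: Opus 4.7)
The plan is to unfold each of the five equalities using the definitions \eqref{2016.12.24.eq2} and \eqref{2016.12.24.eq3} of $f\circ d$ and $d\circ g$, and then recognize the resulting statement as an instance of a structural property that is already available: either that $D_p^n(Yo(Y))$ is a presheaf on $\cal C$, that $D_p^n \circ Yo: {\cal C}\sr PreShv({\cal C})$ is a functor, or that $D_p^n(Yo(g))$ is a morphism of presheaves. No new geometric content is needed; the work is purely one of bookkeeping. A preliminary remark is that although Lemma \ref{2016.09.07.l1} and the statement that $D_p$ is a functor on presheaves are formulated for a single application of $D_p$, they give by a trivial induction on $n$ that $D_p^n$ is a functor $PreShv({\cal C})\sr PreShv({\cal C})$, so in particular $D_p^n(Yo(Y))$ is a presheaf and $D_p^n(Yo(g))$ is a morphism of presheaves for every $n$.

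For (1) the identity presheaf axiom for $D_p^n(Yo(Y))$ gives $D_p^n(Id_X,Y)=Id_{D_p^n(X,Y)}$, hence $Id_X\circ d=d$. For (2), with $f:X'\sr X$ and $f':X''\sr X'$, the composition axiom for the presheaf $D_p^n(Yo(Y))$ gives $D_p^n(Yo(Y))(f'\circ f)=D_p^n(Yo(Y))(f)\circ D_p^n(Yo(Y))(f')$; evaluating at $d$ in the diagrammatic convention (first the leftmost map) produces $f'\circ(f\circ d)$. For (3), using that $Yo$ and $D_p^n$ are both functors, $D_p^n(Yo(Id_Y))=Id_{D_p^n(Yo(Y))}$, whose $X$-component is the identity, so $d\circ Id_Y=d$. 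For (4), again by functoriality of $Yo$ and of $D_p^n$, $D_p^n(Yo(g\circ g'))=D_p^n(Yo(g))\circ D_p^n(Yo(g'))$; taking $X$-components and evaluating at $d$ yields $(d\circ g)\circ g'$.

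The only genuinely two-variable identity is (5), and it is the naturality of the presheaf morphism $D_p^n(Yo(g)):D_p^n(Yo(Y))\sr D_p^n(Yo(Y'))$ applied to $f:X'\sr X$. The naturality square expresses exactly $D_p^n(Yo(Y'))(f)\circ D_p^n(Yo(g))_{X'}=D_p^n(Yo(g))_X\circ D_p^n(Yo(Y))(f)$ (in diagrammatic order), and substituting $d$ translates this into $(f\circ d)\circ g=f\circ(d\circ g)$ via the definitions of the two $\circ$-actions. The main obstacle, such as it is, lies in (2) and (5): one must keep careful track of the diagrammatic convention for composition so that the order in which $f$ and $f'$ (respectively $f$ and $g$) appear on the two sides of the equality matches the order in which the corresponding presheaf maps are applied. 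Once that is kept straight, each of the five assertions reduces to a one-line computation.
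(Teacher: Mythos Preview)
Your proof is correct and follows exactly the same approach as the paper's own proof: items (1)--(2) from the presheaf axioms for $D_p^n(Yo(Y))$, items (3)--(4) from the functoriality of $Y\mapsto D_p^n(Yo(Y))$, and item (5) from the fact that $D_p^n(Yo(g))$ is a morphism of presheaves. The only difference is that you spell out the bookkeeping in more detail than the paper does.
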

\begin{proof}
The first two equalities follow from the axioms of presheaf for $D_p^n(Yo(Y))$, the second two from the fact that $Yo\circ D_p^n$ is a functor and the last one from the fact that the family of functions $D_p(Yo(g))_{-}$ is a morphism of presheaves. 
\end{proof}
\begin{lemma}
\llabel{2016.12.24.l1}
Let $({\cal C},p)$ be a universe category, $n\ge 1$, $X,Y\in {\cal C}$, and 
$$(F,a)\in \amalg_{F:X\sr U}D_p^{n-1}((X;F),Y)=D_p(D_p^{n-1}(Yo(Y)))(X)=D_p^n(X,Y)$$
Then one has
\begin{enumerate}
\item for $f:X'\sr X$ 
$$f\circ (F,a)=(f\circ F,Q(f,F)\circ a)$$
\item for $g:Y\sr Y'$ 
$$(F,a)\circ g=(F,a\circ g)$$
\end{enumerate}
\end{lemma}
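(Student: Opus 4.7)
The plan is to unfold both $\circ$-notations at level $n$ in terms of the definition of $D_p$ on presheaves and on morphisms, and then read off the result directly from formulas (\ref{2016.08.30.eq5}) and (\ref{2016.08.30.eq6}). The key observation is that since $D_p^n = D_p \circ D_p^{n-1}$ as an endofunctor of $PreShv(\mathcal{C})$, applying it to $Yo(Y)$ gives the presheaf $D_p(D_p^{n-1}(Yo(Y)))$, and applying it to the morphism $Yo(g)$ gives the morphism $D_p(D_p^{n-1}(Yo(g)))$. I do not expect any real obstacle here; the proof is essentially a direct calculation, and the only thing to be attentive to is matching the $\circ$-notation at level $n-1$ with the definitions of $D_p^{n-1}(-,Y)$ and $D_p^{n-1}(X,-)$.

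For part (1), I would start from $f \circ (F,a) = D_p^n(f,Y)(F,a) = D_p(D_p^{n-1}(Yo(Y)))(f)(F,a)$. Applying formula (\ref{2016.08.30.eq5}) with $\mathcal{G} = D_p^{n-1}(Yo(Y))$ and $\gamma = a$ yields
$$(f \circ F,\ D_p^{n-1}(Yo(Y))(Q(f,F))(a)).$$
By the definition (\ref{2016.12.24.eq2}) of $D_p^{n-1}(Q(f,F),Y)$ and the associated $\circ$-notation on $D_p^{n-1}$, the second coordinate is exactly $Q(f,F) \circ a$, giving the desired formula.

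For part (2), I would similarly unfold $(F,a) \circ g = D_p^n(X,g)(F,a) = D_p(D_p^{n-1}(Yo(g)))_X(F,a)$. Applying formula (\ref{2016.08.30.eq6}) with $r = D_p^{n-1}(Yo(g))$ and $\gamma = a$ yields
$$(F,\ D_p^{n-1}(Yo(g))_{(X;F)}(a)).$$
By the definition (\ref{2016.12.24.eq3}) of $D_p^{n-1}((X;F),g)$ and the $\circ$-notation, the second coordinate equals $a \circ g$, as claimed. This completes both parts of the proof.
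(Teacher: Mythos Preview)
Your proof is correct and follows essentially the same approach as the paper: unfold the $\circ$-notation at level $n$ in terms of $D_p^n = D_p \circ D_p^{n-1}$, apply formulas (\ref{2016.08.30.eq5}) and (\ref{2016.08.30.eq6}) for the action of $D_p$ on presheaves and on morphisms of presheaves respectively, and then recognise the resulting second coordinate via the $\circ$-notation at level $n-1$. In fact your citation of (\ref{2016.08.30.eq6}) for part (2) is the correct one, whereas the paper's own proof (perhaps by a slip) refers to (\ref{2016.08.30.eq5}) in both cases.
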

\begin{proof}
In the first case we have
$$f\circ (F,a)=
$$$$D_p^n(Yo(Y))(f)((F,a))=
(f\circ F,D_p^{n-1}(Yo(Y))(Q(f,F))(a))=
$$$$(f\circ F,Q(f,F)\circ a)$$
where the first equality is by the definition of $D_p^n(f,Y)$, the second by (\ref{2016.08.30.eq5}) and the third by the definition of $D_p^{n-1}(Q(f,F),Y)$. 

In the second case we have
$$(F,a)\circ g=
$$$$D_p^n(Yo(g))_X((F,a))=
(F,D_p^{n-1}(Yo(g))_{(X;F)}(a))=
$$$$(F,a\circ g)
$$
where the first equality is by the definition of $D_p^n(X,g)$, the second by (\ref{2016.08.30.eq5}) and the third by the definition of $D_p^{n-1}((X;F),g)$. The lemma is proved.
\end{proof}
\begin{remark}\rm
\llabel{2015.07.29.rem2}
It is likely that the functions (\ref{2016.12.24.eq2}) and (\ref{2016.12.24.eq3}) generalize to composition functions
\begin{eq}
\llabel{2016.12.18.eq3}
D_p^n(X,Y)\times D_p^m(Y,Z)\sr D_p^{n+m}(X,Z)
\end{eq}
The formulas 1.-5. suggest that these composition functions satisfy the unity and associativity axioms and therefore one obtains, from any universe category $({\cal C},p)$, a new category $({\cal C},p)_*$ with the same collection of objects and morphisms between two objects given by
$$Mor_{({\cal C},p)_*}(X,Y)=\amalg_{n\ge 0}D_p^n(X,Y)$$
The study of the composition functions (\ref{2016.12.18.eq3}) and categories $({\cal C},p)_*$ is deferred to a later paper.  
\end{remark}
\begin{remark}\rm
\llabel{2016.12.14.rem2}
The observations of Remark \ref{2016.12.14.rem1} apply, with obvious modifications, to the construction $D_p$ as well.
\end{remark}

\subsection{Isomorphisms of presheaves $u_1$ and $\wt{u}_1$}
\llabel{Sec.3}

We now consider a universe category, that is, a category $\cal C$ with a universe $p$ and a choice of a final object $pt$. We usually denote a universe category as $({\cal C},p)$ without mentioning the final object. For any universe category we have constructed in \cite[Section 2]{Cfromauniverse} a C-system $CC({\cal C},p)$.

The main goal of this section is to provide constructions for Problems 
\ref{2015.04.30.prob1a} and \ref{2015.04.30.prob1b}.

Let us first recall the construction of $CC({\cal C},p)$. One defines first, by induction on $n$, pairs $(Ob_n, int_n:Ob_n\sr {\cal C})$ where $Ob_n=Ob_n({\cal C},p)$ is a set and $int_n$ is a function from $Ob_n$ to objects of $\cal C$. The definition is as follows:
\begin{enumerate}
\item $Ob_0$ is the standard one point set $unit$ whose element we denote by $tt$. The function $int_0$ maps $tt$ to the final object $pt$ of the universe category structure on $\cal C$,
\item $Ob_{n+1}=\amalg_{A\in Ob_n}Mor(int(A),U)$ and $int_{n+1}(A,F)=(int(A);F)$.
\end{enumerate}
We then define $Ob(CC({\cal C},p))$ as $\amalg_{n\ge 0}Ob_n$ such that elements of $Ob(CC({\cal C},p))$ are pairs $\Gamma=(n,A)$ where $A\in Ob_n({\cal C},p)$. We define the function $int:Ob(CC({\cal C},p))\sr {\cal C}$ as the sum of functions $int_n$. Where no confusion between $int$ and $int_n$ is likely we will omit the index $n$ at $int_n$. 

The morphisms in $CC({\cal C},p)$ are defined by
$$Mor_{CC({\cal C},p)}=\amalg_{\Gamma,\Gamma'\in Ob(CC)}Mor_{\cal C}(int(\Gamma),int(\Gamma'))$$
and the function $int$ on morphisms maps a triple $((\Gamma,\Gamma'),a)$ to $a$.
Note that the subset in $Mor$ that consists of $f$ such that $dom(f)=\Gamma$ and $codom(f)=\Gamma'$ is not equal to the set $Mor_{\cal C}(int(\Gamma),int(\Gamma'))$ but instead to the set of triples of the form $f=((\Gamma,\Gamma'),a)$ where $a\in Mor_{\cal C}(int(\Gamma),int(\Gamma'))$. The functor $int$ maps $((\Gamma,\Gamma'),a)$ to $a$. This map is bijective and therefore the functor is fully faithful but its morphism component is not the identity function. 

The length function is defined by $l((n,A))=n$.

One defines $pt$ as $pt=(0,tt)$. It is the only object of length $0$.

If $\Gamma=(n,B)$ where $n>0$ then, by construction, $B=(A,F)$ where $F:int(A)\sr U$. The $ft$ function is defined on such $\Gamma$ by $ft(\Gamma)=(n-1,A)$ and on $pt$ by $ft(pt)=pt$.
\begin{lemma}
\llabel{2016.08.22.l1}
For $\Gamma=(n,A)$ and  $T=(n',B)\in Ob(CC({\cal C},p))$ one has $T\in\Ob_1(\Gamma)$ if and only if $n'=n+1$ and there exists $F:int(A)\sr U$ such that $B=(A,F)$.
\end{lemma}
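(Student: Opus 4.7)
The plan is to proceed by direct unfolding of the relevant definitions, since the statement is essentially a translation of the abstract definition of $\Ob_1$ into the explicit description of $Ob(CC({\cal C},p))$ given just above the lemma. Recall that by (\ref{2016.11.15.eq5}) one has $T \in \Ob_1(\Gamma)$ if and only if $l(T) = l(\Gamma) + 1$ and $ft(T) = \Gamma$, so I need to translate each of these two conditions.

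First I would handle the length condition. Since $l$ is defined by $l((m,A)) = m$, the equality $l(T) = l(\Gamma) + 1$ becomes $n' = n + 1$. Next, for the $ft$-condition, I would invoke the definition of $ft$ on objects of positive length: if $n' > 0$ and $T = (n', B)$, then by construction of $Ob_{n'}$ we have $B \in Ob_{n'} = \amalg_{A' \in Ob_{n'-1}} Mor_{\cal C}(int(A'), U)$, so $B$ has the form $(A', F)$ for a unique $A' \in Ob_{n'-1}$ and $F : int(A') \sr U$, and then $ft(T) = (n'-1, A')$.

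For the forward direction, assuming $T \in \Ob_1(\Gamma)$, the length condition gives $n' = n+1 > 0$, so the decomposition $B = (A', F)$ exists, and the equality $ft(T) = \Gamma$ reads $(n, A') = (n, A)$, forcing $A' = A$ and hence $B = (A, F)$. For the reverse direction, assuming $n' = n+1$ and $B = (A, F)$, the length condition is immediate and $ft(T) = (n, A) = \Gamma$ is also immediate from the definition of $ft$.

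I do not foresee any real obstacle; the only thing to be careful about is that the decomposition $B = (A', F)$ in $Ob_{n'}$ is unique (as a consequence of $Ob_{n'}$ being defined as a disjoint union), which is what allows the passage from $ft(T) = \Gamma$ to the equality $A' = A$ in the forward direction. Everything else is bookkeeping of the explicit definitions of $Ob(CC({\cal C},p))$, $l$, and $ft$ recalled in the paragraphs immediately preceding the lemma.
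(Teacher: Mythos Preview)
Your proposal is correct and follows essentially the same approach as the paper: both unfold the definition of $\Ob_1$ into the conditions $n'=n+1$ and $ft(T)=\Gamma$, then use the structure of $Ob_{n'}$ as a disjoint union to decompose $B=(A',F)$ and match $A'=A$ via the $ft$-condition. The paper's proof is slightly terser but the logical structure is identical.
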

\begin{proof}
By definition of the length function $l$, we have $l(\Gamma)=n$ and $l(T)=n'$. By definition of $\Ob_1$, $T\in \Ob_1(\Gamma)$ if and only if $n'=n+1$ and $ft(T)=\Gamma$.

If $T=(n+1,(A,F))$ then $n'=n+1$. In particular, $l(T)>0$ and therefore $ft(T)=(n,A)=\Gamma$. This proves the "if" part. 

Assume that $T=(n',B)\in \Ob_1(\Gamma)$. Then $n'=n+1$. Since $n'>0$, $B$ is a pair of the form $(A',F)$. Since $ft(T)=(n,A')=(n,A)$ we have $A'=A$. This proves the ``only if'' part. 
\end{proof}
The $p$-morphism for $\Gamma=(n,A)$ where $n>0$ and $A=(B,F)$ is given by $((\Gamma,ft(\Gamma)),p_{F})$ where $p_{F}$ are the $p$-morphisms of the universe structure.

For $f:(n,A')\sr (n,A)$ and $T$ such that $l(T)=l(\Gamma)+1$ and $ft(T)=\Gamma$ one has, by Lemma \ref{2016.08.22.l1}, $T=(n+1,(A,F))$ and one defines 
\begin{eq}
\llabel{2016.08.22.eq2}
f^*(T)=(n+1,(A',int(f)\circ F))
\end{eq}
and
\begin{eq}
\llabel{2016.08.22.eq3}
q(f,T)=((f^*(T),T),Q(int(f),F))
\end{eq}
The C-system axioms are verified in \cite{Cfromauniverse}.

Let us denote by
$$int^{\circ}:PreShv({\cal C})\sr PreShv(CC)$$
the functor of pre-composition with $int^{op}$ and by 
$$Yo:{\cal C}\sr PreShv({\cal C})$$
the Yoneda embedding of $\cal C$. 

\begin{problem}
\llabel{2015.04.30.prob1a}
To construct an isomorphism of presheaves 
\begin{eq}
\llabel{2016.11.12.eq2}
u_1:\Ob_1\sr int^{\circ}(Yo(U))
\end{eq}
such that for $\Gamma=(n,A)$ and $T=(n+1,(A,F))$ one has 
\begin{eq}
\llabel{2015.04.30.eq3a}
u_{1,\Gamma}(T)=F
\end{eq}
\end{problem}
\begin{construction}\rm\llabel{2016.08.22.constr1}
By definition of $int^{\circ}$ and $Yo$ and Lemma \ref{2016.11.14.l1}, an isomorphism of presheaves of the form (\ref{2016.11.12.eq2}) is a family of functions of the form
$$u_{1,\Gamma}:\Ob_1(\Gamma)\sr Mor_{\cal C}(int(\Gamma),U)$$
parametrized by $\Gamma\in Ob(CC({\cal C},p))$ such that for any $f:\Gamma'\sr \Gamma$ and any $T\in \Ob_1(\Gamma)$ one has
\begin{eq}
\llabel{2015.04.30.eq1a}
u_{1,\Gamma'}(f^*(T))=int(f)\circ u_{1,\Gamma}(T)
\end{eq}
and for any $\Gamma$ the function $u_{1,\Gamma}$ is a bijection.

By Lemma \ref{2016.08.22.l1}, the conditions (\ref{2015.04.30.eq3a}) define our family completely and it remains to verify (\ref{2015.04.30.eq1a}) and the bijectivity condition. 

For $\Gamma=(n,A)$, $T=(n+1,(A,F))$, $\Gamma'=(n',A')$ and $f:\Gamma'\sr \Gamma$ we have, by (\ref{2016.08.22.eq2}),
$$f^*(T)=(n'+1,(A', int(f)\circ F))$$
Therefore,
$$u_{1,\Gamma'}(f^*(T))=u_{1,\Gamma'}((n'+1,(A', int(f)\circ F)))=int(f)\circ F=int(f)\circ u_{1,\Gamma}(T)$$
which proves (\ref{2015.04.30.eq1a}).

By Lemma \ref{2016.08.22.l1}, for $\Gamma=(n,A)$, the formula $F\mapsto (n+1,(A,F))$ defines a function 
$$Mor_{\cal C}(int(A),U)\sr \Ob_1(\Gamma)$$
By the same lemma and (\ref{2015.04.30.eq3a}) this function is inverse to $u_{1,\Gamma}$. This proves the bijectivity condition and completes Construction \ref{2016.08.22.constr1}.
\end{construction}
Using again Lemma \ref{2016.08.22.l1} and (\ref{2015.04.30.eq3a}) we see that for any $\Gamma\in Ob(CC({\cal C},p))$ and $T\in \Ob_1(\Gamma)$,
\begin{eq}
\llabel{2015.05.02.eq1a}
int(T)=(int(\Gamma); u_{1,\Gamma}(T))
\end{eq}
and
\begin{eq}
\llabel{2016.08.24.eq3}
int(p_T)=p_{u_{1,\Gamma}(T)}
\end{eq}
For $f:\Gamma'\sr \Gamma$ and $T$ as above we have
\begin{eq}
\llabel{2016.08.30.eq3}
int(q(f,T))=Q(int(f),u_{1,\Gamma}(T))
\end{eq}
\begin{lemma}
\llabel{2016.08.22.l2}
For $\Gamma=(n,A)$ and $o\in \wt{\Ob_1}(\Gamma)$ one has 
\begin{eq}
\llabel{2016.08.22.eq1}
codom(int(o))=(int(\Gamma);u_{1,\Gamma}(\partial(o)))
\end{eq}
\end{lemma}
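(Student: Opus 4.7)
The plan is to unwind the definitions and then apply the already-established formula (\ref{2015.05.02.eq1a}). The statement is essentially a triviality once one notes that $int$ preserves codomains and that $\partial(o)$ is by definition the codomain of $o$ as a morphism in $CC({\cal C},p)$.

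First I would observe that for $o\in\wt{\Ob}_1(\Gamma)$, the partial element $\partial(o)$ is, by definition (see (\ref{2016.11.15.eq6}) and the paragraph defining $\partial$), equal to $codom(o)$ in the C-system $CC({\cal C},p)$. Moreover $\partial(o)\in\Ob_1(\Gamma)$ since $o\in\wt{\Ob}_1(\Gamma)$.

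Next I would use the fact that $int:CC({\cal C},p)\sr {\cal C}$ is a functor. Inspecting the construction of $CC({\cal C},p)$ recalled just above, a morphism in $CC({\cal C},p)$ is a triple $((\Gamma_1,\Gamma_2),a)$ with $a\in Mor_{\cal C}(int(\Gamma_1),int(\Gamma_2))$, and $int$ sends this triple to $a$. In particular $int$ preserves codomains, so
$$codom(int(o))=int(codom(o))=int(\partial(o)).$$

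Finally I would apply the already established equality (\ref{2015.05.02.eq1a}) to the object $\partial(o)\in\Ob_1(\Gamma)$, which gives $int(\partial(o))=(int(\Gamma);u_{1,\Gamma}(\partial(o)))$. Combining this with the previous displayed equation yields (\ref{2016.08.22.eq1}). There is no real obstacle here; the only thing to be careful about is that one is using the definition of $\partial$ on $\wt{\Ob}_1$ rather than on the larger set $\wt{Ob}(CC)$, but as noted in the paragraph preceding Problem \ref{2016.08.30.prob2} the two agree on $\wt{\Ob}_1(\Gamma)$.
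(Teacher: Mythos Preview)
Your proof is correct and follows essentially the same approach as the paper: both note that $\partial(o)=codom(o)\in\Ob_1(\Gamma)$, use that $int$ preserves codomains so that $codom(int(o))=int(\partial(o))$, and then apply (\ref{2015.05.02.eq1a}). The paper's proof is simply a more terse version of what you wrote.
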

\begin{proof}
We have $codom(o)=\partial(o)\in \Ob_1(\Gamma)$. Therefore (\ref{2016.08.22.eq1}) follows from the equality $codom(int(f))=int(codom(f))$ and (\ref{2015.05.02.eq1a}).
\end{proof}
The second problem whose solution is constructed in this section is as follows.
\begin{problem}
\llabel{2015.04.30.prob1b}
To construct an isomorphism of presheaves 
\begin{eq}
\llabel{2016.11.12.eq3}
\wt{u}_1:\wt{\Ob}_1\sr int^{\circ}(Yo(\wt{U}))
\end{eq}
such that for $o\in \wOb_1(\Gamma)$ one has 
\begin{eq}
\llabel{2015.04.30.eq4a}
\wt{u}_{1,\Gamma}(o)=int(o)\circ Q(u_{1,\Gamma}(\partial(o)))
\end{eq}
where the right hand side is defined by (\ref{2016.08.22.eq1}) and the equality $dom(Q(F))=(dom(F);F)$.
\end{problem}
To construct a solution for this problem we will need the following two lemmas.
\begin{lemma}
\llabel{2016.08.26.l1}
For a universe $p$ in $\cal C$ and $X\in {\cal C}$, the function
$$\amalg_{F\in Mor(X,U)}sec(p_{F})\sr Mor(X,\wt{U})$$
given by the formula $(F,s)\mapsto s\circ Q(F)$ is a bijection.  The inverse bijection is given by the formula $\wt{F}\mapsto (\wt{F}\circ p, Id_X*_{\wt{F}\circ p}\wt{F})$ where $Id_X*_{\wt{F}\circ p}\wt{F}$ is defined because $Id_X\circ \wt{F}\circ p=\wt{F}\circ p$.
\end{lemma}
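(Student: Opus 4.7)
The plan is to verify that the two stated maps are well-defined and mutually inverse by direct computation using only the pullback square (\ref{2016.12.02.eq8}) and the characterising equalities (\ref{2016.11.10.eq1a}) and (\ref{2016.11.10.eq1b}) of the morphism $f *_F g$. No clever idea is needed; the content is just a careful bookkeeping of what the universe structure provides.

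First I would check that the two maps are well-defined. For the forward map $(F,s)\mapsto s\circ Q(F)$, this is immediate as a composition of morphisms in $\cal C$. For the backward map, given $\wt{F}:X\sr\wt{U}$, I need to verify that $Id_X *_{\wt{F}\circ p}\wt{F}$ exists, which requires $Id_X\circ(\wt{F}\circ p)=\wt{F}\circ p$; this is trivial. I then need to check that this morphism actually lies in $sec(p_{\wt{F}\circ p})$, which is precisely the content of (\ref{2016.11.10.eq1a}) applied with $f=Id_X$ and $g=\wt{F}$.

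Next I would check that the two composites are identity functions. Starting with $\wt{F}:X\sr\wt{U}$, the composite (backward then forward) is $(Id_X *_{\wt{F}\circ p}\wt{F})\circ Q(\wt{F}\circ p)$, which equals $\wt{F}$ by (\ref{2016.11.10.eq1b}). Going the other direction, starting with $(F,s)$, the composite produces the pair whose first component is $(s\circ Q(F))\circ p$. Using commutativity of the pullback square (\ref{2016.12.02.eq8}), namely $Q(F)\circ p = p_F\circ F$, together with the section condition $s\circ p_F=Id_X$, this first component simplifies to $F$. The second component is $Id_X *_F (s\circ Q(F))$; by the uniqueness clause implicit in the universal property of the pullback — equivalently, by the uniqueness of a morphism satisfying (\ref{2016.11.10.eq1a}) and (\ref{2016.11.10.eq1b}) — it suffices to verify that $s$ itself satisfies $s\circ p_F = Id_X$ (the section property) and $s\circ Q(F)=s\circ Q(F)$ (tautology), giving $s=Id_X *_F(s\circ Q(F))$.

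The only mildly delicate point is the second composite, where one must remember to split the equality of pairs into its two coordinates and invoke the uniqueness of $*_F$-style morphisms for the second coordinate rather than trying to compute it directly. Once this is done, all the required equalities reduce to (\ref{2016.11.10.eq1a}), (\ref{2016.11.10.eq1b}), the commutativity of (\ref{2016.12.02.eq8}), and the section property $s\circ p_F=Id_X$, so I do not expect any real obstacle.
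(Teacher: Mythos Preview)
Your proposal is correct and follows essentially the same approach as the paper's proof: both directions of the inverse check are carried out exactly as you describe, with the first coordinate of $\Psi(\Phi(F,s))$ reduced via $Q(F)\circ p=p_F\circ F$ and the section condition, and the second coordinate identified with $s$ by checking its post-compositions with $p_F$ and $Q(F)$ and invoking uniqueness. The only difference is that you spell out the well-definedness of the two maps explicitly, which the paper leaves implicit.
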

\begin{proof}
Let us denote the first function by $\Phi$ and second one by $\Psi$. We have
$$\Phi(\Psi(\wt{F}))=\Phi(\wt{F}\circ p, Id_X*_{\wt{F}\circ p} \wt{F})=(Id_X*_{\wt{F}\circ p}\wt{F})\circ Q(\wt{F}\circ p)=\wt{F}$$
where the last equality is by the definition of $*_{\wt{F}\circ p}$, and
$$\Psi(\Phi(F,s))=\Psi(s\circ Q(F))=((s\circ Q(F))\circ p, Id_X*_{(s\circ Q(F))\circ p}(s\circ Q(F)))$$
Next we have 
\begin{eq}
\llabel{2016.11.12.eq1}
s\circ Q(F)\circ p=s\circ p_{F}\circ F=F
\end{eq}
It remains to compare $Id_X*_{s\circ Q(F)\circ p}(s\circ Q(F))$ with $s$. 
To do it we need to compare its post-compositions with $p_{F}$ and $Q(F)$ with the same post-compositions for $s$. 

By (\ref{2016.11.12.eq1}) we may replace ${s\circ Q(F)\circ p}$ with $F$.  
We have
$$Id_X*_{F}(s\circ Q(F))\circ p_{F}=Id_X=s\circ p_{F}$$
$$Id_X*_{F}(s\circ Q(F))\circ Q(F)=s\circ Q(F)=s\circ Q(F)$$
Therefore, $Id_X*_{F}(s\circ Q(F))=s$ and
$$\Psi(\Phi(F,s))=(F,s)$$
The lemma is proved.
\end{proof}
\begin{lemma}
\llabel{2016.08.26.l4}
Let $p:Y\sr X$ be a morphism in $\cal C$ and $\Phi:{\cal C}\sr {\cal C}'$ a functor. Then for $s\in sec(p)$ one has $\Phi(s)\in sec(\Phi(p))$. 

If $\Phi$ is fully faithful then the resulting function 
$$\Phi_{sec,p}:sec(p)\sr sec(\Phi(p))$$
is a bijection.
\end{lemma}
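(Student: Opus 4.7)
The first assertion is an immediate consequence of functoriality. By definition of $sec(p)$ we have $s\circ p = Id_X$, and applying $\Phi$ to both sides (recall that the paper uses diagrammatic order, so functoriality reads $\Phi(s\circ p) = \Phi(s)\circ \Phi(p)$) yields $\Phi(s)\circ \Phi(p) = \Phi(Id_X) = Id_{\Phi(X)}$. Hence $\Phi(s) \in sec(\Phi(p))$, so the function $\Phi_{sec,p}$ is well-defined.

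For the second assertion, suppose $\Phi$ is fully faithful. I will prove bijectivity by establishing injectivity from faithfulness and surjectivity from fullness plus faithfulness. Injectivity is immediate: if $s_1,s_2\in sec(p)$ satisfy $\Phi(s_1)=\Phi(s_2)$, then faithfulness of $\Phi$ on $Mor(X,Y)$ forces $s_1=s_2$.

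For surjectivity, let $t\in sec(\Phi(p))$, so $t\circ \Phi(p)=Id_{\Phi(X)}$. Since $\Phi$ is full on $Mor(X,Y)\sr Mor(\Phi(X),\Phi(Y))$, there exists $s\in Mor(X,Y)$ with $\Phi(s)=t$. It remains to verify $s\in sec(p)$, i.e. that $s\circ p=Id_X$. Compute $\Phi(s\circ p)=\Phi(s)\circ \Phi(p)=t\circ \Phi(p)=Id_{\Phi(X)}=\Phi(Id_X)$, and then apply faithfulness of $\Phi$ on $Mor(X,X)$ to conclude $s\circ p = Id_X$. Then $\Phi_{sec,p}(s)=\Phi(s)=t$, which completes the proof.

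There is no real obstacle here; the argument is purely formal, using only functoriality together with the injectivity and surjectivity parts of the definition of a fully faithful functor. The only mild subtlety is keeping track of which hom-set's faithfulness is being used at each step (once on $Mor(X,Y)$ for injectivity and surjectivity of $\Phi_{sec,p}$, and once on $Mor(X,X)$ to transport the section condition back through $\Phi$).
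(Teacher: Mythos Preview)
Your proof is correct and essentially matches the paper's argument. The paper phrases the second part slightly differently, explicitly naming the inverse bijections $\Phi^{-1}_{A,B}$ on hom-sets and verifying that the restriction of $\Phi^{-1}_{X,Y}$ to $sec(\Phi(p))$ is a two-sided inverse to $\Phi_{sec,p}$, whereas you separate injectivity and surjectivity; but the content is the same, and your surjectivity step is exactly the paper's construction of the inverse.
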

\begin{proof}
The first assertion follows immediately from the definition of $sec$ and the axioms of a functor. 

Assume that $\Phi$ is fully faithful. To prove that $\Phi_{sec,p}$ is a bijection let 
$$\Phi^{-1}_{A,B}:Mor_{{\cal C}'}(\Phi(A),\Phi(B))\sr Mor_{\cal C}(A,B)$$
be the inverse to the function $\Phi_{A,B}:Mor_{\cal C}(A,B)\sr Mor_{{\cal C}'}(\Phi(B),\Phi(B))$ that we denoted simply by $\Phi$. One verifies easily that for any $A,B,C\in {\cal C}$ the functions $\Phi^{-1}_{A,B},\Phi^{-1}_{B,C}$ and $\Phi^{-1}_{A,C}$ commute with the compositions and for any $A\in {\cal C}$ one has $\Phi^{-1}_{A,A}(Id_{\Phi(A)})=Id_A$. 

Therefore, for $s'\in sec(\Phi_{Y,X}(p))$ we have $\Phi^{-1}_{X,Y}(s')\in sec(p)$. Indeed, 
$$\Phi_{X,X}(\Phi^{-1}_{X,Y}(s')\circ p)=\Phi_{X,Y}(\Phi^{-1}_{X,Y}(s'))\circ \Phi_{Y,X}(p)=s'\circ \Phi_{Y,X}(p)=Id_{\Phi(X)}$$
and since $\Phi_{X,X}^{-1}(Id_{\Phi(X)})=Id_X$ we obtain that $\Phi^{-1}_{X,Y}(s')\circ p=Id_X$. This implies that $\Phi_{sec,p}$ and the restriction of $\Phi^{-1}_{X,Y}$ to $sec(\Phi(p))$ form a pair of functions between $sec(p)$ and $sec(\Phi(p))$ and one sees immediately that they are mutually inverse.
\end{proof}
\begin{construction}\rm\llabel{2016.08.22.constr2}
By definition of $int^{\circ}$ and $Yo$ and Lemma \ref{2016.11.14.l1}, an isomorphism of presheaves of the form (\ref{2016.11.12.eq3}) is a family of functions of the form
$$\wt{u}_{1,\Gamma}:\wt{\Ob}_1(\Gamma)\sr Mor_{\cal C}(int(\Gamma),\wt{U})$$
parametrized by $\Gamma\in Ob(CC({\cal C},p))$ such that for any $f:\Gamma'\sr \Gamma$ and any $o\in \wt{\Ob}_1(\Gamma)$ one has
\begin{eq}
\llabel{2015.04.30.eq1b}
\wt{u}_{1,\Gamma'}(f^*(o))=int(f)\circ \wt{u}_{1,\Gamma}(o)
\end{eq}
and for any $\Gamma$ the function $\wt{u}_{1,\Gamma}$ is a bijection.

The equalities (\ref{2015.04.30.eq4a}) define our family completely and it remains to prove (\ref{2015.04.30.eq1b}) and the bijectivity condition. 

For the proof of (\ref{2015.04.30.eq1b}) we have the following, where we write $u$ instead of $u_{1,\Gamma}$ and $u_{1,\Gamma'}$ and $\wt{u}$ instead of $\wt{u}_{1,\Gamma}$ and $\wt{u}_{1,\Gamma'}$,
$$\wt{u}(f^*(o))=
int(f^*(o))\circ Q(u(\partial(f^*(o))))=
int(f^*(o))\circ Q(u(f^*(\partial(o))))=$$$$
int(f^*(o))\circ Q(int(f)\circ u(\partial(o)))=
int(f^*(o))\circ Q(int(f), u(\partial(o)))\circ Q(u(\partial(o)))=$$$$
int(f^*(o))\circ int(q(f,\partial(o)))\circ Q(u(\partial(o)))=
int(f^*(o)\circ q(f,\partial(o)))\circ Q(u(\partial(o)))=$$$$
int(q(f,\Gamma)\circ o)\circ Q(u(\partial(o)))=
int(f\circ o)\circ Q(u(\partial(o)))=$$$$
int(f)\circ int(o)\circ Q(u(\partial(o)))=
int(f)\circ \wt{u}(o)$$
where the first equality is by (\ref{2015.04.30.eq4a}),
second is by definition of $f^*(o)$,
the third is by (\ref{2015.04.30.eq1a}),
the fourth is by (\ref{2016.08.24.eq4}), 
the fifth is by (\ref{2016.08.30.eq3}),
the sixth is because $int$ is a functor, 
the seventh is by \cite[(2.19)]{fromunivwithPiI},
the eighth is by definition of $q(f,-)$,
the ninth is because $int$ is a functor
and the tenth is again by (\ref{2015.04.30.eq4a}). This completes the proof of (\ref{2015.04.30.eq1b}).

To prove that the function $\wt{u}_{1,\Gamma}$ is a bijection we will represent it as the composition of functions that we can show to be bijections. The functions are of the form
$$\wt{\Ob}_1(\Gamma)\sr \amalg_{T\in\Ob_1(\Gamma)}\partial^{-1}(T)\sr \amalg_{F:int(\Gamma)\sr U}sec(p_F)\sr Mor(int(\Gamma),\wt{U})$$
and are given by the formulas
$$o\mapsto (\partial(o),o)\spc\spc (T,o)\mapsto (u(T),int(o))\spc\spc  (F,s)\mapsto s\circ Q(F)$$
The first function is the function $X\sr \amalg_{y\in Y}f^{-1}(y)$, which is defined and is a bijection for any function of sets $f:X\sr Y$. The second one is the total function of the function $u$ and the family of functions $int_{sec,p_T}$ of Lemma \ref{2016.08.26.l4}. Since $u$ and the functions $int_{sec,p_T}$ are bijections the total function is a bijection. The third function is the bijection of Lemma \ref{2016.08.26.l1}.

Let us show that the composition of these bijections equals $\wt{u}$. Indeed, for $o\in \wt{\Ob}_1(\Gamma)$ we have
$$o\mapsto (\partial(o),o)\mapsto (u(\partial(o)),int(o))\mapsto int(o)\circ Q(u(\partial(o)))=\wt{u}(o)$$
This completes Construction \ref{2016.08.22.constr2}.
\end{construction}
\begin{remark}\rm\llabel{2016.08.26.rem1}
The inverse to $\wt{u}_{1,\Gamma}$ can be expressed by the formula 
$$\wt{u}_{1,\Gamma}^{-1}(H)=int_{\Gamma,u_{1,\Gamma}^{-1}(H\circ p)}^{-1}(Id_{int(\Gamma)}*_{H\circ p}H)$$
Note that while we can omit explicitly mentioning $dom(f)$ and $codom(f)$ when we write $int(f)$ we must specify them when we write $int^{-1}(f)$ because $int$ is bijective only on the subsets of morphisms with fixed domain and codomain. This makes the expression for $\wt{u}_{1,\Gamma}^{-1}$ longer than one would prefer.
\end{remark}
The family of functions $\partial_{\Gamma}$ forms a morphism of presheaves $\wt{Ob}_n\sr \Ob_n$ that we usually denote simply by $\partial$.  
\begin{lemma}
\llabel{2016.12.02.l4}
The square of  morphisms of presheaves
\begin{eq}\llabel{2016.08.20.eq1}
\begin{CD}
\wt{\Ob}_1 @>\wt{u}_1>> int^{\circ}(Yo(\wt{U}))\\
@V\partial VV @VV int^{\circ}(Yo(p))V\\
\Ob_1 @>u_1>> int^{\circ}(Yo(U))
\end{CD}
\end{eq}
commutes. 
\end{lemma}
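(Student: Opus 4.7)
The plan is to verify commutativity pointwise, using Lemma \ref{2016.11.14.l1}. Fix $\Gamma \in CC({\cal C},p)$ and $o \in \wt{\Ob}_1(\Gamma)$; I need to show that
$$u_{1,\Gamma}(\partial(o)) = \wt{u}_{1,\Gamma}(o) \circ p,$$
since $(int^{\circ}(Yo(p)))_{\Gamma}$ is post-composition with $p$ and $\partial_{\Gamma}(o) = \partial(o)$.

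Writing $F = u_{1,\Gamma}(\partial(o))$ for short, the defining formula (\ref{2015.04.30.eq4a}) gives $\wt{u}_{1,\Gamma}(o) = int(o) \circ Q(F)$. Therefore
$$\wt{u}_{1,\Gamma}(o) \circ p = int(o) \circ Q(F) \circ p = int(o) \circ p_F \circ F,$$
where the second equality is the commutativity of the pullback square (\ref{2016.12.02.eq8}) defining the universe structure.

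Now I apply (\ref{2016.08.24.eq3}) to rewrite $p_F = int(p_{\partial(o)})$, giving
$$int(o) \circ p_F \circ F = int(o \circ p_{\partial(o)}) \circ F.$$
Since $o \in \wt{\Ob}_1(\Gamma)$ is a section of $p_{\partial(o)}$, we have $o \circ p_{\partial(o)} = Id_{\Gamma}$, so the composite equals $int(Id_{\Gamma}) \circ F = F = u_{1,\Gamma}(\partial(o))$, which is exactly what was needed.

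There is no real obstacle here: the proof is just a short chain of substitutions using the definition of $\wt{u}_1$, the universe pullback identity $Q(F) \circ p = p_F \circ F$, the compatibility (\ref{2016.08.24.eq3}) between $int$ and $p$-morphisms, and the section condition on $o$. The only care required is keeping the indices $\Gamma$, $\partial(o)$ straight and remembering that $u_{1,\Gamma}$ takes values in $Mor_{\cal C}(int(\Gamma), U)$ so that post-composition with $p$ is well-defined.
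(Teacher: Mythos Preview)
Your proof is correct and follows essentially the same approach as the paper: both verify the equality pointwise by expanding $\wt{u}_{1,\Gamma}(o)\circ p$ via (\ref{2015.04.30.eq4a}), the pullback square (\ref{2016.12.02.eq8}), the identity (\ref{2016.08.24.eq3}), and the section property of $o$. The only minor remark is that your reference to Lemma \ref{2016.11.14.l1} is slightly off (that lemma concerns when a natural transformation is an isomorphism), but the underlying principle you use---that equality of presheaf morphisms is checked componentwise---is of course correct.
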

\begin{proof}
For $\Gamma$ and $o\in \wt{\Ob}_1(\Gamma)$ we have 
$$int^{\circ}(Yo(p))_{\Gamma}(\wt{u}_{1,\Gamma}(o))=
(\wt{u}_{1,\Gamma}(o))\circ p=
int(o)\circ Q(u_{1,\Gamma}(\partial(o)))\circ p=
$$$$int(o)\circ p_{u_{1,\Gamma}(\partial(o))}\circ u_{1,\Gamma}(\partial(o))=
int(o\circ p_{\partial(o)})\circ u_{1,\Gamma}(\partial(o))=
u_{1,\Gamma}(\partial(o))$$
where the first equality is by definition of $int^{\circ}$ and $Yo$, the second  by (\ref{2015.04.30.eq4a}), the third by commutativity of (\ref{2016.12.02.eq8}), the fourth by (\ref{2016.08.24.eq3}) and the fifth by the definition $\wt{\Ob}_1(\Gamma)$ in (\ref{2016.11.15.eq6}) and the fact that $\partial(o)=codom(o)$. The lemma is proved. 
\end{proof}

\subsection{Functor isomorphisms $SD_p$}
\llabel{Sec.4}

In this section we continue to consider a universe category $({\cal C},p)$. For any $({\cal C},p)$ we will relate the functor $D_p$ on $PreShv({\cal C})$ and the functor $Sig$ on $PreShv(CC({\cal C},p))$.
\begin{problem}\llabel{2016.08.28.prob1}
For a universe category $({\cal C},p)$ to construct an isomorphism of functors from $PreShv({\cal C})$ to $PreShv(CC)$ of the form
$$SD_p:int^{\circ}\circ Sig \sr D_p\circ int^{\circ}$$
\end{problem}
\begin{construction}\rm\llabel{2016.08.28.constr1}
In view of Lemma \ref{2016.11.14.l1}, we have to construct, for any ${\cal G}\in PreShv({\cal C})$, an isomorphism of presheaves on $CC({\cal C},p)$ of the form
$$SD_{p,{\cal G}}:Sig(int^{op}\circ {\cal G})\sr int^{op}\circ D_p({\cal G})$$
and to show that these isomorphisms are natural in ${\cal G}$, that is, that for a morphism of presheaves $r:{\cal G}\sr {\cal G}'$ one has 
$$SD_{p,{\cal G}}\circ int^{\circ}(D_p(r))=Sig(int^{\circ}(r))\circ SD_{p,{\cal G}'}$$
Applying Lemma \ref{2016.11.14.l1} again, we see that we need to construct, for each $\cal G$ and $\Gamma\in CC({\cal C},p)$, a bijection $SD_{p,{\cal G},\Gamma}$, which we will denote $\phi_{{\cal G},\Gamma}$ for the duration of the proof,  of the form  
$$\phi_{{\cal G},\Gamma}:Sig(int^{op}\circ {\cal G})(\Gamma)\sr (int^{op}\circ D_p({\cal G}))(\Gamma)=D_p({\cal G})(int(\Gamma))$$
and to show that two conditions hold:
\begin{enumerate}
\item for any $f:\Gamma'\sr \Gamma$ we have
\begin{eq}
\llabel{2016.08.30.eq1}
\phi_{{\cal G},\Gamma}\circ D_p({\cal G})(int(f))=Sig(int^{op}\circ {\cal G})(f)\circ \phi_{{\cal G},\Gamma'}
\end{eq}
that is, the square
\begin{eq}
\llabel{2016.11.19.eq1}
\begin{CD}
Sig(int^{op}\circ {\cal G})(\Gamma) @>\phi_{{\cal G},\Gamma}>> D_p({\cal G})(int(\Gamma))\\
@VSig(int^{op}\circ {\cal G})(f)VV @VVD_p({\cal G})(int(f))V\\
Sig(int^{op}\circ {\cal G})(\Gamma') @>\phi_{{\cal G},\Gamma'}>> D_p({\cal G})(int(\Gamma'))
\end{CD}
\end{eq}
commutes.
\item for any $r:{\cal G}\sr {\cal G}'$ and $\Gamma\in CC({\cal C},p)$ we have
\begin{eq}
\llabel{2016.08.30.eq2}
\phi_{{\cal G},\Gamma}\circ D_p(r)_{int(\Gamma)}=Sig(int^{\circ}(r))_{\Gamma}\circ \phi_{{\cal G}',\Gamma}
\end{eq}
that is, the square
\begin{eq}
\llabel{2016.11.19.eq2}
\begin{CD}
Sig(int^{op}\circ {\cal G})(\Gamma) @>\phi_{{\cal G},\Gamma}>> D_p({\cal G})(int(\Gamma))\\
@VSig(int^{\circ}(r))_{\Gamma}VV @VVD_p(r)_{int(\Gamma)}V\\
Sig(int^{op}\circ {\cal G}')(\Gamma) @>\phi_{{\cal G},\Gamma}>> D_p({\cal G}')(int(\Gamma))
\end{CD}
\end{eq}
commutes.
\end{enumerate}
To construct $\phi_{{\cal G},\Gamma}$ we first compute using (\ref{2016.08.30.eq7})
$$Sig(int^{op}\circ {\cal G})(\Gamma)=\amalg_{T\in\Ob_1(\Gamma)}{\cal G}(int(T))$$
and using (\ref{2016.08.30.eq4})
$$D_p({\cal G})(int(\Gamma))=\amalg_{F:int(\Gamma)\sr U}{\cal G}((int(\Gamma);F))$$
and define the function $\phi_{{\cal G},\Gamma}$ by the formula
\begin{eq}
\llabel{2016.09.01.eq3}
\phi_{{\cal G},\Gamma}((T,g))=(u_{1,\Gamma}(T),g)
\end{eq}
where the right hand side is defined because of (\ref{2015.05.02.eq1a}).  The function $\phi_{{\cal G},\Gamma}$ is a bijection as the total function of the bijection $u_{1,\Gamma}$ and the family of bijections, namely the identity functions.

To prove equality (\ref{2016.08.30.eq1}) we compute using (\ref{2016.08.30.eq8})
$$Sig(int\circ {\cal G})(f)(T,g)=(f^*(T),{\cal G}(int(q(f,T)))(int(T)))$$
and using (\ref{2016.08.30.eq5})
$$D_p({\cal G})(int(f))(F,g)=(int(f)\circ F,{\cal G}(Q(int(f),F))(g))$$
Equality (\ref{2016.08.30.eq1}) follows now from (\ref{2015.04.30.eq1a}) and (\ref{2016.08.30.eq3}). 

To prove equality (\ref{2016.08.30.eq2}) we compute using (\ref{2016.08.30.eq9})
$$Sig(int^{\circ}(r))_{\Gamma}(T,g)=(T,r_{int(T)}(g))$$
and using (\ref{2016.08.30.eq6})
$$D_p(r)_{int(\Gamma)}(F,g)=(F,r_{(int(\Gamma);F)}(g))$$
and (\ref{2016.08.30.eq2}) follows from (\ref{2015.05.02.eq1a}).

This completes Construction \ref{2016.08.28.constr1}.
\end{construction}

\subsection{Isomorphisms of presheaves $u_n$ and $\wt{u}_n$ for $n\ge 2$}
\llabel{Sec.5}

In this section we continue to consider a universe category $({\cal C},p)$. For any such $({\cal C},p)$ and any $n\ge 1$, we construct isomorphisms of presheaves on $CC({\cal C},p)$ of the form
\begin{eq}
\llabel{2016.11.22.eq1}
u_n:\Ob_n\sr int^{\circ}(D_p^{n-1}(Yo(U)))
\end{eq}
and
\begin{eq}
\llabel{2016.11.22.eq2}
\wt{u}_n:\wt{\Ob}_n\sr int^{\circ}(D_p^{n-1}(Yo(\wt{U})))
\end{eq}
where $D_p^0=Id_{PreShv({\cal C})}$, and $u_1$ and $\wt{u}_1$ are the isomorphisms constructed in Section \ref{Sec.3}. We show that 
\begin{eq}
\llabel{2016.12.02.eq7}
\wt{u}_n\circ int^{\circ}(D_p^{n-1}(Yo(p)))=\partial\circ \wt{u}_n
\end{eq}

Let us fix a universe category $({\cal C},p)$.
\begin{problem}
\llabel{2016.11.22.prob1}
Let $n\ge 2$. To construct an isomorphism of presheaves on $CC({\cal C},p)$ of the form (\ref{2016.11.22.eq1}). 
\end{problem}
\begin{construction}\rm
\llabel{2016.11.22.constr1}
We proceed by induction on $n$ starting with $n=1$. Observe that $SD_{p,{\cal G}}$ is an isomorphism of the form
\begin{eq}
\llabel{2016.11.22.eq3}
Sig(int^{\circ}({\cal G}))\sr int^{\circ}(D_p({\cal G}))
\end{eq}
The isomorphism $u_1$ was constructed in Section \ref{Sec.3}. For the successor, define $u_{n+1}$ as the following composition
$$
\begin{CD}
\Ob_{n+1} @>S\Ob_{n}^{-1}>> Sig(\Ob_n) @>Sig(u_n)>> Sig(int^{\circ}(D_p^{n-1}(Yo(U)))) @>SD_{p,D_p^{n-1}(Yo(U))}>> \\
@. int^{\circ}(D_p(D_p^{n-1}(Yo(U)))) @= int^{\circ}(D_p^n(Yo(U)))
\end{CD}
$$
\end{construction}
The isomorphism $u_{n+1,\Gamma}$ is of the form
\begin{eq}
\llabel{2016.12.22.eq1}
T\mapsto (ft^n(T),T)\mapsto (ft^n(T),u_{n,ft^n(T)}(T))\mapsto (u_{1,\Gamma}(ft^n(T)),u_{n,ft^n(T)}(T))
\end{eq}
where the form of the first map is by (\ref{2016.09.01.eq5}), the second by (\ref{2016.08.30.eq9}) and the third by (\ref{2016.09.01.eq3}). In particular,
for $n=1$ we get
$$u_{2,\Gamma}(T)=(u_{1,\Gamma}(ft(T)),u_{1,ft(T)}(T))$$
\begin{problem}
\llabel{2016.11.22.prob2}
Let $n\ge 2$. To construct an isomorphism of presheaves on $CC({\cal C},p)$ of the form (\ref{2016.11.22.eq2}).
\end{problem}
\begin{construction}\rm
\llabel{2016.11.22.constr2}
We proceed by induction on $n$ starting with $n=1$. The isomorphism $\wt{u}_1$ was constructed in Section \ref{Sec.3}. For the successor, define $\wt{u}_{n+1}$ as the following composition, where we use that $SD_{p,{\cal G}}$ is of the form (\ref{2016.11.22.eq3}),
$$
\begin{CD}
\wt{\Ob}_{n+1} @>S\wt{\Ob}_{n}^{-1}>> Sig(\wt{\Ob}_n) @>Sig(\wt{u}_n)>> Sig(int^{\circ}(D_p^{n-1}(Yo(\wt{U})))) @>SD_{p,D_p^{n-1}(Yo(\wt{U}))}>> \\
@. int^{\circ}(D_p(D_p^{n-1}(Yo(\wt{U})))) @= int^{\circ}(D_p^n(Yo(\wt{U})))
\end{CD}
$$
\end{construction}
The isomorphism $\wt{u}_{n+1,\Gamma}$ is of the form
\begin{eq}
\llabel{2016.12.22.eq2}
o\mapsto (ft^n(\partial(o)),o)\mapsto (ft^n(\partial(o)),\wt{u}_{n,ft^n(\partial(o))}(o))\mapsto (u_{1,\Gamma}(ft^n(\partial(o))),\wt{u}_{n,ft^n(\partial(o))}(o))
\end{eq}
where the form of the first map is by (\ref{2016.09.01.eq7}), the second by (\ref{2016.08.30.eq9}) and the third by (\ref{2016.09.01.eq3}). In particular,
for $n=1$ we get
$$\wt{u}_{2,\Gamma}(o)=(u_{1,\Gamma}(ft(\partial(o))),\wt{u}_{1,ft(\partial(o))}(o))$$
\begin{lemma}
\llabel{2016.12.02.l3}
For any $n\ge 1$, (\ref{2016.12.02.eq7}) holds, that is, the square
$$
\begin{CD}
\wt{\Ob}_n @>\wt{u}_n>> int^{\circ}(D_p^{n-1}(Yo(\wt{U})))\\
@V\partial VV @VVint^{\circ}(D_p^{n-1}(Yo(p)))V\\
\Ob_n@>u_n>> int^{\circ}(D_p^{n-1}(Yo(U)))
\end{CD}
$$
commutes.
\end{lemma}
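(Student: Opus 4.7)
The plan is to argue by induction on $n\ge 1$. The base case $n=1$ is precisely Lemma \ref{2016.12.02.l4}, already established above.

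For the inductive step, fix $n\ge 1$ and suppose the square commutes for $n$. By Constructions \ref{2016.11.22.constr1} and \ref{2016.11.22.constr2}, each of $u_{n+1}$ and $\wt{u}_{n+1}$ is defined as a three-step composition, and stacking these factorizations one on top of the other decomposes the $(n+1)$-square into three sub-squares. The top piece has horizontal arrows $S\wt{\Ob}_n^{-1}$ (top) and $S\Ob_n^{-1}$ (bottom), joined vertically by $\partial$ and $Sig(\partial)$. Its commutativity is the inverse form of Lemma \ref{2016.12.04.l1}: that lemma gives $S\wt{\Ob}_n\circ\partial=Sig(\partial)\circ S\Ob_n$, and pre- and post-composing with the isomorphisms $S\wt{\Ob}_n^{-1}$ and $S\Ob_n^{-1}$ yields $\partial\circ S\Ob_n^{-1}=S\wt{\Ob}_n^{-1}\circ Sig(\partial)$.

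The middle piece has horizontal arrows $Sig(\wt{u}_n)$ and $Sig(u_n)$ and vertical arrows $Sig(\partial)$ and $Sig(int^{\circ}(D_p^{n-1}(Yo(p))))$. It is obtained by applying the functor $Sig$ to the inductive hypothesis, using (\ref{2016.12.14.eq2}) to identify $Sig$ of a composition with the composition of the $Sig$'s. The bottom piece has horizontal arrows $SD_{p,D_p^{n-1}(Yo(\wt{U}))}$ and $SD_{p,D_p^{n-1}(Yo(U))}$ and vertical arrows $Sig(int^{\circ}(D_p^{n-1}(Yo(p))))$ and $int^{\circ}(D_p^n(Yo(p)))$. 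This is an instance of the naturality of $SD_p$ in its presheaf argument established as diagram (\ref{2016.11.19.eq2}) in Construction \ref{2016.08.28.constr1}, applied to the morphism of presheaves $r=D_p^{n-1}(Yo(p)):D_p^{n-1}(Yo(\wt{U}))\sr D_p^{n-1}(Yo(U))$, together with the identification $D_p(D_p^{n-1}(-))=D_p^n(-)$.

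No individual step is difficult, since the substantive content has been packaged into three naturality-type results (the compatibility of $\partial$ with $S\Ob_n$ and $S\wt{\Ob}_n$, the functoriality of $Sig$, and the naturality of $SD_p$ in its presheaf argument). The only real bookkeeping obstacle is to invert $S\Ob_n$ and $S\wt{\Ob}_n$ correctly when reducing the top piece to Lemma \ref{2016.12.04.l1}, and to match indices so that the three sub-squares share the correct common edges.
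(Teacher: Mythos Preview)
Your proof is correct and follows essentially the same approach as the paper: induction on $n$ with base case Lemma \ref{2016.12.02.l4}, and the inductive step handled by decomposing the square along the three factors of $u_{n+1}$ and $\wt{u}_{n+1}$, invoking Lemma \ref{2016.12.04.l1}, functoriality of $Sig$ applied to the inductive hypothesis, and naturality of $SD_p$ in its presheaf argument. The only cosmetic difference is that you orient the three sub-squares as top/middle/bottom whereas the paper lays them out left/middle/right, and you spell out the inversion of the $S\Ob_n$, $S\wt{\Ob}_n$ isomorphisms a bit more explicitly.
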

\begin{proof}
We proceed by induction on $n$ starting at $n=1$. For $n=1$ it is shown in Lemma \ref{2016.12.02.l4}.

For the successor of $n$ we have the diagram
$$
\begin{CD}
\wOb_{n+1} @>S\wOb_n^{-1}>> Sig(\wOb_n) @>Sig(\wt{u}_n)>> Sig(int^{\circ}(D_p^{n}(Yo(\wt{U})))) @>SD_p>> int^{\circ}(D_p(D_p^{n}(Yo(\wt{U}))))\\
@V\partial VV @VSig(\partial) VV @VSig(int^{\circ}(D_p^n(Yo(p)))) VV @Vint^{\circ}(D_p(D_{p}^n(Yo(p)))) VV\\
\Ob_{n+1} @>S\Ob_n^{-1}>> Sig(\Ob_n) @>Sig(u_n)>> Sig(int^{\circ}(D_p^{n}(Yo(U)))) @>SD_p>> int^{\circ}(D_p(D_p^{n}(Yo(U))))
\end{CD}
$$
where the composition of the upper horizontal arrows is $\wt{u}_n$ and the composition of the lower horizontal ones is $u_n$. To prove the lemma it is sufficient to show that the three squares of the diagram commute. 

The commutativity of the left square follows easily from  Lemma \ref{2016.12.04.l1}. The middle square commutes by the inductive assumption using the fact that $Sig$ is a functor. The right square commutes because $SD_p$ is an isomorphism of functors, that is, it is natural in morphisms of presheaves. 
\end{proof}

\subsection{The case of a locally cartesian closed $\cal C$ - isomorphisms $\eta_n$ and $\mu_n$}
\llabel{Sec.6}
In this section $\cal C$ is a locally cartesian closed category (see Appendix \ref{App.2}) with a binary product structure (see Appendix \ref{App.1}). 

The main construction of this section is Construction \ref{2015.03.29.constr1}  for Problem \ref{2015.03.29.prob1} that provides, for a universe $p$ in a category $\cal C$ as above, representations for the presheaves $D_p(Yo(Y))$.  As a corollary we provide constructions for Problems \ref{2016.12.02.prob1} and \ref{2015.03.17.prob3}.

For a morphism $p:\wt{U}\sr U$ in $\cal C$ and an object $Y$ of $\cal C$  let 
$$I_p(Y):=\uu{Hom}_U((\wt{U},p),(U\times Y,pr_1))$$
and let 
$$prI_p(Y)=p\triangle pr_1:I_p(Y)\sr U$$
be the canonical morphism.

For a morphism $f:Y\sr Y'$ let
$$I_p(f)=\uu{Hom}_U((\wt{U},p),U\times f)$$
By (\ref{2016.11.30.eq1}),(\ref{2016.11.30.eq2}) and Definition \ref{2016.11.28.def1}(3) we have
$$I_p(Id_Y)=Id_{I_p(Y)}$$
and for $f':Y'\sr Y''$ we have
$$I_p(f\circ f')=I_p(f)\circ I_p(f')$$
which shows that the mappings $Y\mapsto I_p(Y)$ and $f\mapsto I_p(f)$ define a functor from $\cal C$ to itself.

The main goal of this section is to construct an isomorphism $\eta$ between functors from ${\cal C}$ to $PreShv({\cal C})$ of the form:
$$\eta:Yo\circ D_p\sr I_p\circ Yo$$

This isomorphism provides, in particular, a family, parametrized by $Y\in {\cal C}$,  of representations for the functors $D_p(Yo(Y))$.

Note that $I_p$ depends on the choice of both the locally cartesian closed and the binary product structures on $\cal C$, but does not depend on the universe structure. On the other hand, the construction of the functors $D_p(F)$ requires a universe structure on $p$ but does not require either the locally cartesian closed or the binary product structure on $\cal C$. 

The computations below are required because we have to deal with this fact. In particular, we have to take into the account that for $F:X\sr U$ the fiber product $(X,F)\times_U(\wt{U},p)$ that we have from the structure of a category with pullbacks on $\cal C$ need not be equal to $(X;F)$ that we have from the universe structure on $p$. 

Let $p:\wt{U}\sr U$ be a universe and $Y$ an object of $\cal C$. We assume that $\cal C$ is equipped with a locally cartesian closed and a binary product structures. For $F:X\sr U$ there is a unique morphism
$$\iota_F:(X;F)\sr (X,F)\times_U(\wt{U},p)$$
such that 
\begin{eq}
\llabel{2016.12.02.eq3}
\begin{array}{c}
\iota_F\circ pr_1=p_{F}\\\\
\iota_F\circ pr_2=Q(F)
\end{array}
\end{eq}
which is a particular case of the morphisms $\iota$ of Lemma \ref{2015.04.16.l1}. 

The evaluation morphism in the case of $I_p(Y)$ is a morphism in ${\cal C}/U$ of the form 
$$evI_p: (I_p(Y),prI_p(Y))\times_U(U\times Y, pr_1)\sr (U\times Y,pr_1)$$
Define a morphism
$$st_p(Y):(I_p(Y);prI_p(Y))\sr Y$$
as the composition:
\begin{eq}
\llabel{2016.12.02.eq2}
st_p(Y):=\iota_{prI_p(Y)}\circ evI_p(Y)\circ pr_2
\end{eq}
We will need to use some properties of these morphisms.
\begin{lemma}
\llabel{2015.04.14.l2a}
Let $f:Y\sr Y'$ be a morphism, then one has
$$Q(I_p(f),prI_p(Y'))\circ st_p(Y')=st_p(Y)\circ f$$
\end{lemma}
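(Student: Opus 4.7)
The plan is to reduce the identity to a chain of naturality statements, expanding both sides with the definition $st_p(Y):=\iota_{prI_p(Y)}\circ evI_p(Y)\circ pr_2$ and then matching terms.

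First, I would expand $Q(I_p(f),prI_p(Y'))\circ st_p(Y')$ as
$$Q(I_p(f),prI_p(Y'))\circ \iota_{prI_p(Y')}\circ evI_p(Y')\circ pr_2.$$
The first technical step is to establish the compatibility of $\iota$ with the universe $Q$-morphisms: for $g:X'\sr X$ and $F:X\sr U$,
$$Q(g,F)\circ \iota_F=\iota_{g\circ F}\circ (g\times_U Id_{(\wt{U},p)}),$$
where $g\times_U Id$ denotes the canonical morphism between the fiber-product objects $(X',g\circ F)\times_U(\wt{U},p)\sr (X,F)\times_U(\wt{U},p)$ induced by $g$. One verifies this by composing both sides with $pr_1$ and $pr_2$ and using (\ref{2016.12.02.eq3}) together with the defining equations of $Q(g,F)$ given by (\ref{2011.11.10.eq1a})--(\ref{2016.11.10.eq1b}), (\ref{2016.08.24.eq4}). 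Applying this with $g=I_p(f)$ and $F=prI_p(Y')$ (using the identity $I_p(f)\circ prI_p(Y')=prI_p(Y)$, which follows from the fact that $I_p(f)$ is a morphism over $U$) rewrites the expression as
$$\iota_{prI_p(Y)}\circ(I_p(f)\times_U Id_{(\wt{U},p)})\circ evI_p(Y')\circ pr_2.$$

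Second, I would invoke the naturality of the evaluation morphism for $\uu{Hom}_U$ in its second argument. Since $I_p(f)=\uu{Hom}_U((\wt{U},p),U\times f)$ and $U\times f:(U\times Y,pr_1)\sr (U\times Y',pr_1)$ is a morphism over $U$, the standard characterization of the induced map on internal Homs yields
$$(I_p(f)\times_U Id_{(\wt{U},p)})\circ evI_p(Y')=evI_p(Y)\circ(U\times f).$$
This identity is the content of the appendix on locally cartesian closed structures and will be applied as a black box. Substituting gives
$$\iota_{prI_p(Y)}\circ evI_p(Y)\circ(U\times f)\circ pr_2.$$

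Finally, the naturality of $pr_2$ for the binary product functor ($U\times-$) asserts $(U\times f)\circ pr_2=pr_2\circ f$, and substituting produces $\iota_{prI_p(Y)}\circ evI_p(Y)\circ pr_2\circ f=st_p(Y)\circ f$, as required. The main obstacle is the first step, where care is needed to disentangle the two different pullback structures in play: the canonical one from $\cal C$ having pullbacks (used in the definition of $\uu{Hom}_U$ and $evI_p$) versus the chosen one from the universe structure on $p$ (used in the definition of $Q$ and $(X;F)$). The morphism $\iota_F$ is precisely the comparison between these two structures, and all other steps are routine applications of naturality.
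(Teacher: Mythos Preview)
Your proof is correct and takes essentially the same approach as the paper: the paper presents the argument as the commutativity of a three-square diagram whose left, middle, and right squares are exactly your steps 2, 3, and 4 respectively, citing Lemma \ref{2015.04.16.l1} for the $\iota$-compatibility, the naturality square (\ref{2016.11.28.eq1}) for the evaluation morphism, and the definition of $Id_U\times f$ for the projection. The only cosmetic difference is that you unroll the diagram into a linear chain of rewrites.
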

\begin{proof}
Let $pr=prI_p(Y)$, $pr'=prI_{p}(Y')$, $\iota=\iota_{pr}$, $\iota'=\iota_{pr'}$, $ev=evI_p(Y)$ and $ev'=evI_p(Y')$. Then we have to verify that the outer square of the following diagram commutes:
$$
\begin{CD}
(I_p(Y);pr) @>\iota>> (I_p(Y),pr)\times_U(\wt{U},p) @>ev>> U\times Y @>pr_2 >> Y\\
@VQ(I_p(f),pr') VV @V I_p(f)\times Id_{\wt{U}} VV@V Id_U\times f VV @VV f V\\
(I_p(Y');pr') @>\iota'>> (I_p(Y'),pr')\times_U(\wt{U},p) @>ev'>> U\times Y' @>pr_2 >> Y'
\end{CD}
$$
The commutativity of the left square is a particular case of Lemma \ref{2015.04.16.l1}. The commutativity of the right square is an immediate corollary of the definition of $Id_U\times f$.  The commutativity of the middle square is a particular case of (\ref{2016.11.28.eq1}). 
\end{proof}
\begin{remark}\rm
\llabel{2016.04.23.rem1}
In \cite{GambinoHyland} generalized polynomial functors are defined as functors isomorphic to functors of the form $I_p$. 
\end{remark}
\begin{problem}
\llabel{2015.03.29.prob1}
Let $\cal C$ be a locally cartesian closed category with a binary product structure and $p$ a universe in $\cal C$. To construct, for all $Y\in{\cal C}$, isomorphisms of presheaves
$$\eta_Y:D_p(Yo(Y))\sr Yo(I_p(Y))$$
that are natural in $Y$, i.e., such that for all  $f:Y\sr Y'$ the square
$$
\begin{CD}
D_p(Yo(Y)) @>D_p(Yo(f))>> D_p(Yo(Y'))\\
@V\eta_{Y} VV @VV\eta_{Y'}V\\
Yo(I_p(Y)) @>Yo(I_p(f))>> Yo(I_p(Y'))
\end{CD}
$$
commutes.
\end{problem}
\begin{construction}\rm
\llabel{2015.03.29.constr1}
We will use the notation introduced before Remark \ref{2015.07.29.rem2}. We need to construct bijections 
$$\eta_{Y,X}:D_p(X,Y)\sr Mor_{\cal C}(X,I_p(Y))$$
such that for all $f:Y\sr Y'$, $X\in{\cal C}$ and $d\in D_p(X,Y)$ one has
\begin{eq}
\llabel{2016.09.11.eq1}
\eta_{Y,X}(d)\circ I_p(f)=\eta_{Y',X}(d\circ f)
\end{eq}
and for any $f:X'\sr X$ and $d\in D_p(Yo(Y))(X)$ one has
\begin{eq}
\llabel{2016.09.11.eq2}
f\circ \eta_{Y,X}(d)=\eta_{Y,X'}(f\circ d)
\end{eq}
We will construct bijections 
$$\eta^{!}_{Y,X}:Mor(X,I_p(Y))\sr D_p(X,Y)$$
such that for all $g:X\sr I_p(Y)$ one has:
\begin{enumerate}
\item for all $f:Y\sr Y'$ one has 
\begin{eq}
\llabel{2016.09.11.eq3}
\eta^!(g)\circ f=\eta^!(g\circ I_p(f))
\end{eq}
\item for all $f:X'\sr X$ one has 
\begin{eq}
\llabel{2016.09.11.eq4}
f\circ \eta^!(g)=\eta^!(f\circ g)
\end{eq}
\end{enumerate}
and then define $\eta_{Y,X}$ as the inverse to $\eta^!_{Y,X}$. One proves easily that (\ref{2016.09.11.eq1}) implies (\ref{2016.09.11.eq3}) and (\ref{2016.09.11.eq2}) implies (\ref{2016.09.11.eq4}).

By (\ref{2016.11.14.eq4}) we have
$$D_p(X,Y)=\amalg_{F:X\sr U}Mor_{\cal C}((X;F),Y)$$
For $g:X\sr I_p(Y)$ we set
\begin{eq}
\llabel{2016.12.02.eq5}
\eta^{!}_{Y,X}(g):=(g\circ prI_p(Y), Q(g,prI_p(Y))\circ st_p(Y))
\end{eq}
as can be seen on the diagram
$$
\begin{CD}
@. Y\\
@. @AAst_p(Y)A\\
(X;g\circ prI_p(Y)) @>Q(g,prI_p(Y))>> (I_p(Y);prI_p(Y)) @>Q(prI_p(Y))>> \wt{U}\\
@VVV @VVV @VVpV\\
X @>g>> I_p(Y) @>prI_p(Y)>> U
\end{CD}
$$

To see that this is a bijection observe first that it equals the composition
$$Mor(X,I_p(Y))\sr \amalg_{F:X\sr U}Mor_U((X,F),(I_p(Y),prI_p(Y)))\sr \amalg_{F:X\sr U}Mor((X;F),Y)$$
where the first function is given by the formula $g\mapsto (g\circ prI_p(Y),g)$  and the second is the sum over all $F:X\sr U$ of functions $g\mapsto Q(g,prI_p(Y))\circ st_p(Y)$. 

The first function is a function of the form $A\sr \amalg_{b\in B}h^{-1}(b)$, which is defined and is a bijection for any function of sets $h:A\sr B$. It remains to show that the second one is a bijection for every $F$.

By definition of the $\uu{Hom}_U$ structure we know that for each $F$ the function
$$adj:Mor_U((X,F),(I_{p}(Y),prI_p(Y)))\sr Mor_U((X,F)\times_U(\wt{U},p),(U\times Y,pr_1))$$
given by $g\mapsto (g\times_U Id_{(\wt{U},p)})\circ evI_p(Y)$ is a bijection. 

By definition of the binary product, the function of post-composition with $pr_2$, 
$$Mor_U((X,F)\times_U(\wt{U},p),(U\times Y,pr_1))\sr Mor((X,F)\times_U(\wt{U},p), Y)$$
is a bijection. By Lemma \ref{2016.12.02.l1}, $\iota_F$ is an isomorphism
and therefore the pre-composition with it is a bijection. Now we have two functions
$$Mor_U((X,F),(I_{p}(Y),prI_p(Y)))\sr Mor((X;F),Y)$$
given by $g\mapsto \iota_F\circ (g\times_U Id_{\wt{U}})\circ evI_p(Y)\circ pr_2$ and $g\mapsto Q(g,prI_p(Y))\circ st_p(Y)$ of which the first one is the bijection. It remains to show that these functions are equal. In view of (\ref{2016.12.02.eq2}) it is sufficient to show that 
$$\iota_F\circ(g\times_U Id_{\wt{U}})=Q(g,prI_p(Y))\circ \iota_{prI_p(Y)}$$
To do it we have to to show that the compositions of the left and right hand sides with $pr_1$ (to $I_p(Y)$) and $pr_2$ (to $\wt{U}$) are equal.

For $pr_1$ we have
$$\iota_F\circ(g\times_U Id_{\wt{U}})\circ pr_1=\iota_F\circ pr_1\circ g=p_F\circ g$$
$$Q(g,prI_p(Y))\circ \iota_{prI_p(Y)}\circ pr_1=Q(g,prI_p(Y))\circ p_{prI_p(Y)}=p_{g\circ prI_p(Y)}\circ g=p_F\circ g$$
where we used the defining equations (\ref{2016.12.02.eq3}) of $\iota$, the definition (\ref{2016.12.02.eq4}) of $Q(-,-)$ and the fact that $g$ is a morphism over $U$. 

For $pr_2$ we have
$$\iota_F\circ(g\times_U Id_{\wt{U}})\circ pr_2=\iota_F\circ pr_2\circ Id_{\wt{U}}=\iota_F\circ pr_2=Q(F)$$
$$Q(g,prI_p(Y))\circ \iota_{prI_p(Y)}\circ pr_2=Q(g,prI_p(Y))\circ Q(prI_p(Y))=Q(g\circ prI_p(Y))=Q(F)$$
where we used the defining equations (\ref{2016.12.02.eq3}) of $\iota$, (\ref{2016.08.24.eq4}) and the fact that $g$ is a morphism over $U$.

We now have to check the behavior of $\eta^!$ with respect to morphisms in $Y$ (equality (\ref{2016.09.11.eq3})) and $X$ (equality (\ref{2016.09.11.eq4}).

Let $pr=prI_p(Y)$ and $pr'=prI_p(Y')$. Let $g:X\sr I_p(Y)$ be as above. For $f:Y\sr Y'$ we have
$$\eta^!(g)\circ f=D_p(Yo(f))_X(g\circ pr, Q(g,pr)\circ st_p(Y))=(g\circ pr, Q(g,pr)\circ st_p(Y)\circ f)$$
where the first equality is by (\ref{2016.12.02.eq5}) and the second by (\ref{2016.12.02.eq6}) and
$$\eta^!(g\circ I_p(f))=(g\circ I_p(f)\circ pr', Q(g\circ I_p(f),pr')\circ st_p(Y'))$$
where the equality is by (\ref{2016.12.02.eq5}). We have $pr=I_p(f)\circ pr'$ because $I_p(f)$ is a morphism over $U$. It remains to check that 
$$Q(g,pr)\circ st_p(Y)\circ f=Q(g\circ I_p(f),pr')\circ st_p(Y')$$
By \cite[Lemma 2.5]{Cfromauniverse} we have
$$Q(g\circ I_p(f),pr')=Q(g,pr)\circ Q(I_p(f),pr')$$
and the remaining equality
$$Q(g,pr)\circ st_p(Y)\circ f=Q(g,pr)\circ Q(I_p(f),pr')\circ st_p(Y')$$
follows from Lemma \ref{2015.04.14.l2a}.

Consider now $f:X'\sr X$. Then 
$$f\circ \eta^!(g)=D_p(Yo(Y))(f)(g\circ pr, Q(g,pr)\circ st_p(Y))=$$$$(f\circ g\circ pr,  Q(f, g\circ pr)\circ Q(g,pr)\circ st_p(Y))$$
and
$$\eta^!(f\circ g)=(f\circ g\circ pr, Q(f\circ g, pr)\circ st_p(Y))$$
where we used (\ref{2016.12.02.eq5}) and (\ref{2016.11.14.eq4a}) 
and the required equality follows from \cite[Lemma 2.5]{Cfromauniverse}.  
\end{construction}

\begin{problem}
\llabel{2016.12.02.prob1}
For a locally cartesian closed category $\cal C$ with a binary product structure and a universe $p$ in $\cal C$ to construct, for all $n\ge 0$ and $Y\in {\cal C}$, isomorphisms of presheaves
$$\eta_{n,Y}:D_p^n(Yo(Y))\sr Yo(I_p^n(Y))$$
that are natural in $Y$, i.e., such that for all  $f:Y\sr Y'$ the square
\begin{eq}
\llabel{2017.01.03.eq1}
\begin{CD}
D_p^n(Yo(Y)) @>D_p^n(Yo(f))>> D_p^n(Yo(Y'))\\
@V\eta_{n,Y} VV @VV\eta_{n,Y'}V\\
Yo(I_p^n(Y)) @>Yo(I_p^n(f))>> Yo(I_p^n(Y'))
\end{CD}
\end{eq}
commutes.
\end{problem}
\begin{construction}
\llabel{2016.12.02.constr1}\rm
Proceed by induction on $n$ starting with $n=0$. By our convention, $D_p^0=Id_{PreShv({\cal C})}$ and $I_p^0=Id_{\cal C}$. We set $\eta_{0,Y}=Id_{Yo(Y)}$. For the successor we define $\eta_{n+1,Y}$ as the composition
$$
\begin{CD}
@. D_p^{n+1}(Yo(Y))=\\
D_p(D_p^n(Yo(Y)))@>D_p(\eta_{n,Y})>> D_p(Yo(I_p^n(Y))) @>\eta_{1,I_p^n(Y)}>> Yo(I_p(I_p^n(Y)))=\\
@. Yo(I_p^{n+1}(Y))
\end{CD}
$$
The naturality in $Y$ is easily proved by induction.
\end{construction}
Note that we can write $\eta_{n,Y,X}$ as a function of the form
$$D_p^n(X,Y)\sr Mor_{\cal C}(X,I_p^n(Y))$$
Let us spell out the formulas expressing the fact that $\eta_{n,Y}$ is a morphism of presheaves and the naturality of $\eta_{n,Y}$ in $Y$ in the $\circ$-notation. Let $d\in D_p^n(X,Y)$. Then for $f:X'\sr X$ one has
\begin{eq}
\llabel{2017.01.03.eq2}
\eta_n(f\circ d)=f\circ \eta_n(d)
\end{eq}
and for $g:Y\sr Y'$ one has
\begin{eq}
\llabel{2017.01.03.eq3}
\eta_n(d\circ g)=\eta_n(d)\circ I_p^n(g)
\end{eq}
Indeed, the first formula is an expression of the fact that the family of functions $\eta_{n,Y,-}$ is a morphism of presheaves and the second formula an expression of the commutativity of the square (\ref{2017.01.03.eq1}). 

We let $\eta_{n,Y}^!$ denote the isomorphism inverse to $\eta_{n,Y}$. For $m:X\sr I_p^n(Y)$ we have the following formulas that follow from (\ref{2017.01.03.eq2}) and (\ref{2017.01.03.eq3}). For $f:X\sr X'$ one has 
\begin{eq}
\llabel{2016.09.11.eq4n}
f\circ \eta_n^!(m)=\eta_n^!(f\circ m)
\end{eq}
and for $g:Y\sr Y'$ one has 
\begin{eq}
\llabel{2016.09.11.eq3n}
\eta_n^!(m)\circ g=\eta_n^!(m\circ I_p^n(g))
\end{eq}
Let us also introduce the following notation that will be useful below. For $Y\in {\cal C}$ let 
\begin{eq}
\llabel{2017.01.07.eq1}
Id^n_Y=\eta_n^!(Id_{I_p^n(Y)})\in D_p^n(I_p^n(Y),Y)
\end{eq}
We have the following formulas.
\begin{lemma}
\llabel{2017.01.07.l2}
In the notations introduced above one has:
\begin{enumerate}
\item for $m:X\sr I_p^n(Y)$ one has
\begin{eq}
\llabel{2017.01.07.eq3}
m\circ Id^n_Y=\eta_n^!(m)
\end{eq}
\item for $g:Y\sr Y'$ one has
\begin{eq}
\llabel{2017.01.07.eq4}
Id^n_Y\circ g=\eta_n^!(I_p^n(g))
\end{eq}
\end{enumerate}
\end{lemma}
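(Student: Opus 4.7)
The plan is to derive both equalities as immediate consequences of the naturality formulas~(\ref{2016.09.11.eq4n}) and~(\ref{2016.09.11.eq3n}) satisfied by $\eta_n^!$, combined with the identity axioms for composition (namely, $h \circ Id = h$ and $Id \circ h = h$ in $\cal C$, together with items~(1) and~(3) of Lemma~\ref{2017.01.07.l1}).

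For part~(1), start from the definition $Id^n_Y = \eta_n^!(Id_{I_p^n(Y)})$ and apply~(\ref{2016.09.11.eq4n}) with $f$ taken to be the morphism $m$ itself and the internal argument taken to be $Id_{I_p^n(Y)}$. This yields
$$m \circ Id^n_Y = m \circ \eta_n^!(Id_{I_p^n(Y)}) = \eta_n^!(m \circ Id_{I_p^n(Y)}) = \eta_n^!(m),$$
where the last step uses that $Id_{I_p^n(Y)}$ acts as a right identity for ordinary composition in $\cal C$.

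For part~(2), apply~(\ref{2016.09.11.eq3n}) with the internal argument taken to be $Id_{I_p^n(Y)}$ and with the $Y$-side morphism taken to be $g$. This gives
$$Id^n_Y \circ g = \eta_n^!(Id_{I_p^n(Y)}) \circ g = \eta_n^!(Id_{I_p^n(Y)} \circ I_p^n(g)) = \eta_n^!(I_p^n(g)),$$
where the last step uses that $Id_{I_p^n(Y)}$ is a left identity for composition with $I_p^n(g) : I_p^n(Y) \to I_p^n(Y')$ in $\cal C$.

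Both steps are one-line calculations, so there is no real obstacle beyond verifying that the formulas~(\ref{2016.09.11.eq4n}) and~(\ref{2016.09.11.eq3n}) apply in the form needed; these were already recorded in the excerpt as formal consequences of~(\ref{2017.01.03.eq2}) and~(\ref{2017.01.03.eq3}). The lemma is effectively the Yoneda-style observation that the universal element $Id^n_Y$, obtained by transporting the identity across the isomorphism $\eta_n$, reproduces the entire inverse isomorphism under post- and pre-composition.
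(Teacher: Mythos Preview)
Your proof is correct and follows exactly the same approach as the paper: unfold the definition of $Id^n_Y$, apply (\ref{2016.09.11.eq4n}) or (\ref{2016.09.11.eq3n}) respectively, and simplify using the identity axiom of $\cal C$. The paper's argument is line-for-line the same as yours.
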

\begin{proof}
For the first formula we have
$$m\circ Id^n_Y=m\circ \eta_n^!(Id_{I_p^n(Y)})=\eta_n^!(m\circ Id_{I_p^n(Y)})=\eta_n^!(m)$$
where the first equality is by the definition of $Id^n_Y$, the second by (\ref{2016.09.11.eq4n}) and the third by the identity axiom of $\cal C$.

For the second formula we have 
$$Id_n^Y\circ g=\eta_n^!(Id_{I_p^n(Y)})\circ g=\eta_n^!(Id_{I_p^n(Y)}\circ I_p^n(g))=\eta_n^!(I_p^n(g))$$
where the first equality is by the definition of $Id^n_Y$, the second by (\ref{2016.09.11.eq3n}) and the third by the identity axiom of $\cal C$. The lemma is proved.
\end{proof}
Note that (\ref{2017.01.07.eq3}) implies in particular that we have
\begin{eq}
\llabel{2017.01.07.eq5}
\eta_n(d)\circ Id^n_Y=\eta_n^!(\eta_n(d))=d
\end{eq}
\begin{problem}
\llabel{2015.03.17.prob3}
For $\cal C$ as above, a universe $p:\wt{U}\sr U$ in $\cal C$ and $n\ge 1$ to construct isomorphisms of presheaves
$$\mu_n: \Ob_n\sr int^{\circ}(Yo(I^{n-1}_p(U)))$$
and
$$\wt{\mu}_n: \wOb_n\sr int^{\circ}(Yo(I^{n-1}_p(\wt{U})))$$
such that the square
\begin{eq}
\llabel{2016.12.04.eq2}
\begin{CD}
\wOb_n@>\wt{\mu}_n>> int^{\circ}(Yo(I^{n-1}_p(\wt{U})))\\
@V\partial VV @VV int^{\circ}(Yo(I^{n-1}_p(p))) V\\
\Ob_n @>\mu_n>> int^{\circ}(Yo(I^{n-1}_p(U)))
\end{CD}
\end{eq}
commutes.
\end{problem}
\begin{construction}
\llabel{2015.03.17.constr2}\rm
Compose isomorphism $u_n$ of Construction \ref{2016.11.22.constr1} (resp. isomorphism $\wt{u}_n$ of Construction \ref{2016.11.22.constr2}) with the isomorphism $int^{\circ}(\eta_{n-1,U})$ (resp. $int^{\circ}(\eta_{n-1,\wt{U}})$) where $\eta_{n-1,U}$ (resp. $\eta_{n-1,\wt{U}}$) is the isomorphism of Construction \ref{2016.12.02.constr1}. 

To prove the commutativity of (\ref{2016.12.04.eq2}) consider the diagram
$$
\begin{CD}
\wOb_n @>\wt{u}_n>> int^{\circ}(D_p^{n-1}(Yo(\wt{U}))) @>int^{\circ}(\eta_{n-1,\wt{U}})>> int^{\circ}(Yo(I^{n-1}_p(\wt{U})))\\
@V\partial VV @Vint^{\circ}(D_p^{n-1}(Yo(p))) VV @VV int^{\circ}(Yo(I^{n-1}_p(p))) V\\
\Ob_n @>u_n>> int^{\circ}(D_p^{n-1}(Yo(U))) @>int^{\circ}(\eta_{n-1,U})>>  int^{\circ}(Yo(I^{n-1}_p(U)))
\end{CD}
$$
The composition of the upper arrows is $\wt{\mu}_n$ and the composition of the lower ones is $\mu_n$. It remains to show that the two squares commute. The left square commutes by Lemma \ref{2016.12.02.l3}. The right square commutes because $int^{\circ}$ is a functor and $\eta_{n-1,Y}$ is natural in $Y$.
\end{construction}
Observe that for $\Gamma\in CC({\cal C},p)$, $T\in \Ob_n(\Gamma)$ and $o\in \wOb_n(\Gamma)$ one has:
\begin{eq}
\llabel{2017.01.03.eq4}
\mu_{n,\Gamma}(T)=\eta_{n-1,U,int(\Gamma)}(u_{n,\Gamma}(T))\in int^{\circ}(Yo(I_p^{n-1}(U)))(\Gamma)=Mor_{\cal C}(int(\Gamma),I_p^{n-1}(U))
\end{eq}
and
\begin{eq}
\llabel{2017.01.03.eq5}
\wt{\mu}_{n,\Gamma}(o)=\eta_{n-1,\wt{U},int(\Gamma)}(\wt{u}_{n,\Gamma}(o))\in int^{\circ}(Yo(I_p^{n-1}(\wt{U})))(\Gamma)=Mor_{\cal C}(int(\Gamma),I_p^{n-1}(\wt{U}))
\end{eq}
and the commutativity of (\ref{2016.12.04.eq2}) is equivalent to the assertion that for all $\Gamma$ and $o$ as above one has
\begin{eq}
\llabel{2017.01.03.eq6}
\mu_{n,\Gamma}(\partial(o))=\wt{\mu}_{n,\Gamma}(o)\circ I_p^n(p)
\end{eq}

\section{Functoriality}
\subsection{Universe category functors and the $D_p$ construction}
\llabel{Sec.7}

Let $({\cal C},p,pt)$ and $({\cal C}',p',pt')$ be two universe categories. Recall from \cite{Cfromauniverse} the following definition.
\begin{definition}
\llabel{2016.12.09.def1}
A universe category functor from $({\cal C},p,pt)$ to  $({\cal C}',p',pt')$  is a triple ${\bf\Phi}=(\Phi,\phi,\wt{\phi})$ where $\Phi$ is a functor ${\cal C}\sr {\cal C}'$ and $\phi:\Phi(U)\sr U'$, $\wt{\phi}:\Phi(\wt{U})\sr \wt{U}'$ are two morphisms such that one has:
\begin{enumerate}
\item $\Phi$ takes $pt$ to a final object,
\item $\Phi$ takes the canonical pullbacks based on $p$ to pullbacks,
\item the square
\begin{eq}
\llabel{2015.03.21.sq1}
\begin{CD}
\Phi(\wt{U}) @>\wt{\phi}>> \wt{U}'\\
@V\Phi(p)VV @VVp' V\\
\Phi(U) @>\phi>> U'
\end{CD}
\end{eq}
is a pullback.
\end{enumerate}
\end{definition}

\begin{problem}
\llabel{2016.12.14.prob1}
Let ${\bf\Phi}=(\Phi,\phi,\wt{\phi})$ be a universe category functor $({\cal C},p)\sr ({\cal C}',p')$. To construct a functor morphism
\begin{eq}
\llabel{2016.12.14.eq3}
\Phi D:\Phi^{\circ}\circ D_p\sr D_{p'}\circ \Phi^{\circ}
\end{eq}
\end{problem}
\begin{construction}\rm
\llabel{2016.12.14.constr1}
Both the left and the right hand side of (\ref{2016.12.14.eq3}) are functors of the form
$$PreShv({\cal C}')\sr PreShv({\cal C})$$
Therefore, we need, for any presheaf ${\cal G}'$ on ${\cal C}'$ and any $X\in {\cal C}$, to construct a function
\begin{eq}
\llabel{2016.12.14.eq4}
\Phi D_{{\cal G}',X}:D_p(\Phi^{\circ}({\cal G}'))(X)\sr \Phi^{\circ}(D_{p'}({\cal G}'))(X)
\end{eq}
and to prove that
\begin{enumerate}
\item the family $\Phi D_{{\cal G}',-}$ is a morphism of presheaves, that is, for any $a:X\sr Y$ in $\cal C$, the square
\begin{eq}
\llabel{2016.12.16.eq2}
\begin{CD}
D_p(\Phi^{\circ}({\cal G}'))(Y) @>\Phi D_{{\cal G}',Y}>> \Phi^{\circ}(D_{p'}({\cal G}'))(Y)\\
@VD_p(\Phi^{\circ}({\cal G}'))(a)VV @VV\Phi^{\circ}(D_{p'}({\cal G}'))(a)V\\
D_p(\Phi^{\circ}({\cal G}'))(X) @>\Phi D_{{\cal G}',X}>> \Phi^{\circ}(D_{p'}({\cal G}'))(X)
\end{CD}
\end{eq}
commutes,
\item $\Phi D$ is a natural transformation of functors to presheaves, that is, for any $f':{\cal F}'\sr {\cal G}'$ and any $X\in {\cal C}$ the square
\begin{eq}
\llabel{2016.12.16.eq3}
\begin{CD}
D_p(\Phi^{\circ}({\cal F}'))(X) @>\Phi D_{{\cal F}',X}>> \Phi^{\circ}(D_{p'}({\cal F}'))(X)\\
@VD_p(\Phi^{\circ}(f'))_X VV @VV\Phi^{\circ}(D_{p'}(f'))_XV\\
D_p(\Phi^{\circ}({\cal G}'))(X) @>\Phi D_{{\cal G}',X}>> \Phi^{\circ}(D_{p'}({\cal G}'))(X)
\end{CD}
\end{eq}
commutes.
\end{enumerate}
Computing the left and right hand side of (\ref{2016.12.14.eq4}) we see that $\Phi D_{{\cal G}',X}$ should be a function of the form
$$\coprod_{F:X\sr U}{\cal G}'(\Phi((X;F)))\sr \coprod_{F':\Phi(X)\sr U'}{\cal G}'((\Phi(X);F'))$$
Let $F:X\sr U$. Consider $(\Phi(X);\Phi(F)\circ \phi)$. Since  (\ref{2015.03.21.sq1}) is a pullback there is a unique morphism $q$ such that $q\circ \wt{\phi}=Q(\Phi(F)\circ \phi)$ and $q\circ \Phi(p)=p_{\Phi(X),\Phi(F)\circ \phi}\circ \Phi(F)$.  Then the external square in the diagram
$$
\begin{CD}
(\Phi(X);\Phi(F)\circ \phi) @>q>> \Phi(\wt{U}) @>\wt{\phi}>> \wt{U}'\\
@VV p_{\Phi(X),\Phi(F)\circ \phi} V @V\Phi(p) VV @VV p' V\\
\Phi(X) @>\Phi(F)>> \Phi(U) @>\phi>> U'
\end{CD}
$$
is a pullback and since the right hand side square is a pullback, the left hand side square is a pullback as well. Together with the fact that $\Phi$ takes pullback squares based on $p$ to pullback squares this implies that we obtain two pullbacks based on $\Phi(F)$ ad $\Phi(p)$. 

By Lemma \ref{2016.12.16.l1} and Lemma \ref{2016.12.02.l1} we have a unique morphism, which is an isomorphism, 
$$\iota_{\bf\Phi}^{X,F}:(\Phi(X);\Phi(F)\circ \phi)\sr \Phi((X;F))$$
such that 
\begin{eq}
\llabel{2015.04.08.eq1}
\iota_{\bf\Phi}^{X,F}\circ \Phi(p_{X,F})=p_{\Phi(X),\Phi(F)\circ \phi}
\end{eq}
\begin{eq}
\llabel{2015.04.08.eq2}
\iota_{\bf\Phi}^{X,F}\circ \Phi(Q(F))\circ\wt{\phi}=Q(\Phi(F)\circ\phi)
\end{eq}
and we define:
\begin{eq}
\llabel{2016.12.16.eq4}
\Phi D_{{\cal G}',X}(F,\gamma')=(\Phi(F)\circ \phi, {\cal G}'(\iota_{\bf\Phi}^{X,F})(\gamma'))
\end{eq}
When no confusion is likely, we will omit the indexes at $\iota$.  

To prove that (\ref{2016.12.16.eq2}) commutes let 
$$(F:Y\sr U, \gamma'\in {\cal G}'(\Phi((Y;F))))\in D_p(\Phi^{\circ}({\cal G}'))(Y)$$
Then one path in the square gives us
$$(\Phi^{\circ}(D_{p'}({\cal G}'))(a))(\Phi D_{{\cal F}',X}((F,\gamma')))=
$$$$(\Phi^{\circ}(D_{p'}({\cal G}'))(a))((\Phi(F)\circ \phi, {\cal G}'(\iota)(\gamma'))=
D_{p'}({\cal G}')(\Phi(a))((\Phi(F)\circ \phi, {\cal G}'(\iota)(\gamma')))=
$$$$(\Phi(a)\circ \Phi(F)\circ \phi, {\cal G}'(Q(\Phi(a),\Phi(F)\circ \phi))({\cal G}'(\iota)(\gamma')))=
$$$$(\Phi(a\circ F)\circ \phi,{\cal G}'(Q(\Phi(a),\Phi(F)\circ \phi)\circ \iota)(\gamma'))$$
where the first equality is by (\ref{2016.12.16.eq4}), the second by the definition of $\Phi^{\circ}$, the third by (\ref{2016.08.30.eq5}) and the fourth by the composition axiom of $\Phi$ and ${\cal G}'$.

The other path gives us
$$\Phi D_{{\cal G}',X}(D_p(\Phi^{\circ}({\cal G}'))(a)((F,\gamma'))=
$$$$\Phi D_{{\cal G}',X}((a\circ F, \Phi^{\circ}({\cal G}')(Q(a,F))(\gamma')))=
\Phi D_{{\cal G}',X}((a\circ F, {\cal G}'(\Phi(Q(a,F)))(\gamma')))=
$$$$(\Phi(a\circ F)\circ \phi, {\cal G}'(\iota)({\cal G}'(\Phi(Q(a,F)))(\gamma')))=
$$$$(\Phi(a\circ F)\circ \phi, {\cal G}'(\iota\circ \Phi(Q(a,F)))(\gamma'))$$
where the first equality is by (\ref{2016.08.30.eq5}), the second by the definition of $\Phi^{\circ}$, the third by (\ref{2016.12.16.eq4}) and the fourth by the composition axiom of ${\cal G}'$. 

It remains to show that
\begin{eq}
\llabel{2016.12.16.eq7}
Q(\Phi(a),\Phi(F)\circ \phi)\circ \iota=\iota\circ \Phi(Q(a,F))
\end{eq}
We have four pullbacks 
$$
\begin{CD}
(\Phi(X);\Phi(a\circ F)\circ \phi) @>Q(\Phi(a\circ F)\circ \phi)>> \wt{U}'\\
@Vp_{\Phi(X),\Phi(a\circ F)\circ \phi}VV @VVp'V\\
\Phi(X) @>\Phi(a\circ F)\circ \phi>> U'
\end{CD}
\spc\spc
\begin{CD}
(\Phi(Y);\Phi(F)\circ \phi) @>Q(\Phi(F)\circ \phi)>> \wt{U}'\\
@Vp_{\Phi(Y),\Phi(F)\circ \phi}VV @VVp'V\\
\Phi(Y) @>\Phi(F)\circ \phi>> U'
\end{CD}
$$
and
$$ 
\begin{CD}
\Phi((X;a\circ F)) @>\Phi(Q(a\circ F))\circ \wt{\phi}>> \wt{U}'\\
@V\Phi(p_{X,a\circ F})VV @VVp'V\\
\Phi(X) @>\Phi(a\circ F)\circ \phi>> U'
\end{CD}
\spc\spc
\begin{CD}
\Phi((Y;F)) @>\Phi(Q(F))\circ \wt{\phi}>> \wt{U}'\\
@V\Phi(p_{Y,F})VV @VVp'V\\
\Phi(Y) @>\Phi(F)\circ \phi>> U'
\end{CD}
$$
and a morphism $\Phi(a):\Phi(X)\sr \Phi(Y)$ such that $\Phi(a\circ F)\circ \phi=\Phi(a)\circ \Phi(F)\circ \phi$. Applying to these pullbacks Lemma \ref{2016.12.16.l1} and then applying Lemma \ref{2015.04.16.l1} we  obtain a commutative square
$$
\begin{CD}
(\Phi(X);\Phi(a\circ F)\circ \phi) @>c_1(\Phi(a),Id_{\wt{U}'})>> (\Phi(Y);\Phi(F)\circ \phi)\\
@V\iota VV @VV\iota V\\
\Phi((X;a\circ F)) @>c_2(\Phi(a),Id_{\wt{U}'})>> \Phi((Y;F))
\end{CD}
$$
To prove (\ref{2016.12.16.eq7}) it remains to show that 
\begin{eq}
\llabel{2016.12.16.eq5}
c_1(\Phi(a),Id_{\wt{U}}')=Q(\Phi(a),\Phi(F)\circ \phi)
\end{eq}
and
\begin{eq}
\llabel{2016.12.16.eq6}
c_2(\Phi(a),Id_{\wt{U}}')=\Phi(Q(a,F))
\end{eq}
In view of the definition of the morphisms $c_1,c_2$ given in Lemma \ref{2015.04.16.l1} to prove (\ref{2016.12.16.eq5}) we need to show that
$$Q(\Phi(a),\Phi(F)\circ \phi)\circ p_{\Phi(Y),\Phi(F)\circ \phi}=
p_{\Phi(X),\Phi(a\circ F)\circ \phi}\circ \Phi(a)$$
$$Q(\Phi(a),\Phi(F)\circ \phi)\circ Q(\Phi(F)\circ \phi)=Q(\Phi(a\circ F)\circ \phi)$$
The first equality follows from (\ref{2016.12.02.eq4}). The second equality follows from (\ref{2016.08.24.eq4}). In both cases we need also to use that 
$\Phi(a\circ F)=\Phi(a)\circ \Phi(F)$.

To prove (\ref{2016.12.16.eq6}) we need to show that
$$\Phi(Q(a,F))\circ \Phi(p_{Y,F})=\Phi(p_{X,a\circ F})\circ \Phi(a)$$
$$\Phi(Q(a,F))\circ \Phi(Q(F))\circ \wt{\phi}= \Phi(Q(a\circ F))\circ \wt{\phi}$$
The first equality again follows from (\ref{2016.12.02.eq4}) and the composition axiom for $\Phi$ and the second equality follows from (\ref{2016.08.24.eq4}) and the composition axiom for $\Phi$. This completes the proof of commutativity of (\ref{2016.12.16.eq2}).

To prove that (\ref{2016.12.16.eq3}) commutes let 
$$(F:X\sr U,\beta'\in {\cal F}'(\Phi((X;F))))\in D_p(\Phi^{\circ}({\cal F}'))(X)$$
Then one path in the square gives us
$$\Phi^{\circ}(D_{p'}(f'))_X(\Phi D_{{\cal F}',X}((F,\beta'))=
$$$$\Phi^{\circ}(D_{p'}(f'))_X((\Phi(F)\circ \phi,{\cal F}'(\iota)(\beta')))=
D_{p'}(f')_{\Phi(X)}((\Phi(F)\circ \phi,{\cal F}'(\iota)(\beta')))=
$$$$(\Phi(F)\circ \phi,f'_{(\Phi(X);\Phi(F)\circ f)}({\cal F}'(\iota)(\beta')))
$$
where the first equality is by (\ref{2016.12.16.eq4}), the second by the definition of $\Phi^{\circ}$ and the third by (\ref{2016.08.30.eq6}). 

The other path gives us
$$\Phi D_{{\cal G}',X}(D_p(\Phi^{\circ}(f'))_X((F,\beta')))=
$$$$\Phi D_{{\cal G}',X}((F,(\Phi^{\circ}(f'))_{(X;F)}(\beta')))=\Phi D_{{\cal G}',X}((F,f'_{\Phi((X;F))}(\beta')))=
$$$$(\Phi(F)\circ \phi,{\cal G}'(\iota)(f'_{\Phi((X;F))}(\beta')))$$
where the first equality is by (\ref{2016.08.30.eq6}), the second by the definition of $\Phi^{\circ}$ and the third by (\ref{2016.12.16.eq4}). 

It remains to show that
$$f'_{(\Phi(X);\Phi(F)\circ f)}({\cal F}'(\iota)(\beta'))={\cal G}'(\iota)(f'_{\Phi((X;F))}(\beta'))$$
which follows from the axiom of compatibility with morphisms of the natural transformation $f':{\cal F}'\sr {\cal G}'$. This completes the proof of commutativity of (\ref{2016.12.16.eq3}) and with it Construction \ref{2016.12.14.constr1}.
\end{construction}
\begin{problem}
\llabel{2016.12.18.prob1}
Let $\bf\Phi:({\cal C},p)\sr ({\cal C}',p')$ be a universe category functor. Let ${\cal F}\in PreShv({\cal C})$, ${\cal F}'\in PreShv({\cal C}')$ and let
$$m:{\cal F}\sr \Phi^{\circ}({\cal F}')$$
be a morphism of presheaves. Let $n\in\nat$. To construct a morphism of presheaves
$$D_{\bf\Phi}^n(m):D_p^n({\cal F})\sr \Phi^{\circ}(D_{p'}^n({\cal F}'))$$
\end{problem}
\begin{construction}\rm
\llabel{2016.12.18.constr1}
We proceed by induction on $n$. 

For $n=0$ we set $D_{\bf\Phi}^0(m)=m$. 

For the successor of $n$ we need to construct a morphism
$$D_{\bf\Phi}^{n+1}(m):D_p(D_p^n({\cal F}))\sr \Phi^{\circ}(D_{p'}(D_{p'}^n({\cal F}')))$$
We define it as the composition
$$
\begin{CD}
D_p(D_p^n({\cal F})) @>D_p(D_{\bf\Phi}^n(m))>> D_p(\Phi^{\circ}(D_{p'}^n({\cal F}'))) @>\Phi D_{D_{p'}^n({\cal F}')}>> \Phi^{\circ}(D_{p'}(D_{p'}^n({\cal F}')))
\end{CD}
$$
\end{construction}
The explicit form of the morphism $D_p^n(m)$ when $n\ge 1$ is given by the following lemma.
\begin{lemma}
\llabel{2016.12.22.l1}
In the context of Problem \ref{2016.12.18.prob1}, let $n\ge 1$, $X\in {\cal C}$, and 
$$(F,a)\in  \amalg_{F:X\sr U} D_p^{n-1}({\cal F})((X;F))=D_p^n({\cal F})(X)$$
Then one has
$$D_{\bf\Phi}^n(m)_X((F,a))=(\Phi(F)\circ \phi;D_{p'}^{n-1}({\cal F}')(\iota)(D_{\bf\Phi}^{n-1}(m)_{(X;F)}(a)))$$
where 
$$\iota=\iota_{\bf\Phi}^{X,F}:(\Phi(X);\Phi(F)\circ \phi)\sr \Phi((X;F))$$
is the morphism defined by (\ref{2015.04.08.eq1}) and (\ref{2015.04.08.eq2}).
\end{lemma}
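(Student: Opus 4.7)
The plan is to unfold the definition of $D_{\bf\Phi}^n(m)$ from Construction \ref{2016.12.18.constr1} at $n$ and apply the explicit formulas for $D_p$ on morphisms and for $\Phi D$ that were established earlier. Since $n\ge 1$, the construction gives
$$D_{\bf\Phi}^n(m) \;=\; D_p(D_{\bf\Phi}^{n-1}(m))\circ \Phi D_{D_{p'}^{n-1}({\cal F}')},$$
where the composition is in the diagrammatic order used throughout the paper.

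I would then evaluate this composition at $X$ on the element $(F,a)\in D_p^n({\cal F})(X)$ in two steps. First, applying formula (\ref{2016.08.30.eq6}) for the action of $D_p$ on a presheaf morphism gives
$$D_p(D_{\bf\Phi}^{n-1}(m))_X(F,a) \;=\; \bigl(F,\; D_{\bf\Phi}^{n-1}(m)_{(X;F)}(a)\bigr),$$
which lies in $D_p(\Phi^{\circ}(D_{p'}^{n-1}({\cal F}')))(X) = \coprod_{F:X\to U} D_{p'}^{n-1}({\cal F}')(\Phi((X;F)))$.

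Second, applying Construction \ref{2016.12.14.constr1}, specifically formula (\ref{2016.12.16.eq4}) with ${\cal G}' = D_{p'}^{n-1}({\cal F}')$, yields
$$\Phi D_{D_{p'}^{n-1}({\cal F}'),X}\bigl(F,\; D_{\bf\Phi}^{n-1}(m)_{(X;F)}(a)\bigr)
\;=\; \bigl(\Phi(F)\circ \phi,\; D_{p'}^{n-1}({\cal F}')(\iota_{\bf\Phi}^{X,F})\bigl(D_{\bf\Phi}^{n-1}(m)_{(X;F)}(a)\bigr)\bigr),$$
which is precisely the stated formula.

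There is no real obstacle here since the lemma is just the explicit unfolding of the inductive step of Construction \ref{2016.12.18.constr1}; the only thing to be careful about is keeping track of the diagrammatic order of composition and matching the right argument ${\cal G}' = D_{p'}^{n-1}({\cal F}')$ when invoking (\ref{2016.12.16.eq4}), so that the morphism $\iota$ appearing in the final formula is indeed $\iota_{\bf\Phi}^{X,F}$ with ${\cal G}'$ being applied to it.
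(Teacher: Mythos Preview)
Your proposal is correct and follows essentially the same approach as the paper's own proof: unfold the inductive definition of $D_{\bf\Phi}^n(m)$ from Construction \ref{2016.12.18.constr1}, apply (\ref{2016.08.30.eq6}) for the action of $D_p$ on the presheaf morphism $D_{\bf\Phi}^{n-1}(m)$, and then apply (\ref{2016.12.16.eq4}) with ${\cal G}'=D_{p'}^{n-1}({\cal F}')$. The paper's proof is identical in structure and in the references invoked.
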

\begin{proof}
We have
$$D_{\bf\Phi}^n(m)_X((F,a))=
$$$$\Phi D_{D_{p'}^{n-1}({\cal F}'),X}(D_p(D_{\bf\Phi}^{n-1}(m))_X((F,a)))=
\Phi D_{D_{p'}^{n-1}({\cal F}'),X}((F,D_{\bf\Phi}^{n-1}(m)_{(X;F)}(a)))=
$$$$(\Phi(F)\circ \phi;D_{p'}^{n-1}({\cal F}')(\iota)(D_{\bf\Phi}^{n-1}(m)_{(X;F)}(a)))$$
where the first equality is by definition of $D_{\bf\Phi}^n(m)$, the second by (\ref{2016.08.30.eq6}) and the third by (\ref{2016.12.16.eq4}). The lemma is proved.
\end{proof}
\begin{lemma}
\llabel{2016.12.18.l1}
In the context of Problem \ref{2016.12.18.prob1} consider a commutative square in $PreShv({\cal C})$ of the form
\begin{eq}
\llabel{2016.12.18.eq1}
\begin{CD}
{\cal F}_1 @>m_1>> \Phi^{\circ}({\cal F}_1')\\
@Vv VV @VV\Phi^{\circ}(v')V\\
{\cal F}_2 @>m_2>> \Phi^{\circ}({\cal F}_2')
\end{CD}
\end{eq}
Then, for any $n\in\nat$, the square
\begin{eq}
\llabel{2016.12.18.eq2}
\begin{CD}
D_p^n({\cal F}_1) @>D_{\bf\Phi}^n(m_1)>> \Phi^{\circ}(D_{p'}^n({\cal F}_1'))\\
@VD_p^n({v}) VV @VV\Phi^{\circ}(D_{p'}^n({v}'))V\\
D_p^n({\cal F}_2) @>D_{\bf\Phi}^n(m_2)>> \Phi^{\circ}(D_{p'}^n({\cal F}_2'))
\end{CD}
\end{eq}
commutes.
\end{lemma}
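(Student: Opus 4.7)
The plan is to prove the lemma by induction on $n$. The base case $n=0$ is immediate: by Construction \ref{2016.12.18.constr1} we have $D_{\bf\Phi}^0(m_i) = m_i$ and $D_p^0(v) = v$, $D_{p'}^0(v') = v'$, so the square (\ref{2016.12.18.eq2}) coincides with the given square (\ref{2016.12.18.eq1}).

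For the inductive step, I would unfold the definition of $D_{\bf\Phi}^{n+1}(m_i)$ as the composition $D_p(D_{\bf\Phi}^n(m_i)) \circ \Phi D_{D_{p'}^n({\cal F}_i')}$, and similarly note that $D_p^{n+1}(v) = D_p(D_p^n(v))$ and $D_{p'}^{n+1}(v') = D_{p'}(D_{p'}^n(v'))$. This splits the square (\ref{2016.12.18.eq2}) for $n+1$ into two sub-squares stacked horizontally, with middle object $D_p(\Phi^{\circ}(D_{p'}^n({\cal F}_i')))$ on each side and middle vertical arrow $D_p(\Phi^{\circ}(D_{p'}^n(v')))$.

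The left sub-square is obtained by applying the functor $D_p$ to the square (\ref{2016.12.18.eq2}) at level $n$, which commutes by the inductive hypothesis; functoriality of $D_p$ on morphisms of presheaves (as established after Lemma \ref{2016.09.07.l1}, in particular equation (\ref{2016.12.18.eq4})) then gives commutativity of the left sub-square. The right sub-square is precisely the naturality square (\ref{2016.12.16.eq3}) of the natural transformation $\Phi D$ applied to the morphism of presheaves $D_{p'}^n(v'): D_{p'}^n({\cal F}_1') \to D_{p'}^n({\cal F}_2')$, which commutes by Construction \ref{2016.12.14.constr1}.

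There is no real obstacle here; the only minor bookkeeping issue is making sure the identification $\Phi^{\circ}(D_{p'}(D_{p'}^n(v'))) = \Phi^{\circ}(D_{p'}^{n+1}(v'))$ and the corresponding identification for objects are used consistently, so that pasting the two commutative sub-squares along their shared middle column yields precisely the square (\ref{2016.12.18.eq2}) for $n+1$. This completes the induction.
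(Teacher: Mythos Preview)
Your proof is correct and follows essentially the same approach as the paper: induction on $n$ with the base case reducing to the hypothesis, and the successor step splitting the square into a left sub-square obtained by applying $D_p$ to the inductive hypothesis (using functoriality (\ref{2016.12.18.eq4})) and a right sub-square given by naturality of $\Phi D$ (the square (\ref{2016.12.16.eq3})) at the morphism $D_{p'}^n(v')$.
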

\begin{proof}
We proceed by induction on $n$. 

For $n=0$ the square (\ref{2016.12.18.eq2}) coincides with the square (\ref{2016.12.18.eq1}).

For the successor of $n$, (\ref{2016.12.18.eq2}) is the external square of the diagram
$$
\begin{CD}
D_p(D_p^n({\cal F}_1)) @>D_p(D_{\bf\Phi}^n(m_1))>> D_p(\Phi^{\circ}(D_{p'}^n({\cal F}_1'))) @>\Phi D_{D_{p'}^n({\cal F}_1')}>> \Phi^{\circ}(D_{p'}(D_{p'}^n({\cal F}_1')))\\
@VD_p(D_p^n({v})) VV @V D_p(\Phi^{\circ}(D_{p'}^n({v}')))VV @VV\Phi^{\circ}(D_{p'}(D_{p'}^n({v}')))V\\
D_p(D_p^n({\cal F}_2)) @>D_p(D_{\bf\Phi}^n(m_2))>> D_p(\Phi^{\circ}(D_{p'}^n({\cal F}_2'))) @>\Phi D_{D_{p'}^n({\cal F}_2')}>> \Phi^{\circ}(D_{p'}(D_{p'}^n({\cal F}_2')))
\end{CD}
$$
The left hand side square in this diagram is obtained by applying $D_p$ to the square (\ref{2016.12.18.eq2}) for $n$. It is commutative because $D_p$ is a functor and in particular satisfies the composition axiom (\ref{2016.12.18.eq4}). 

The right hand side square is commutative because $\Phi D$ is a natural transformation of functors that satisfies the axiom of compatibility with morphisms of presheaves. In our particular case this axiom is applied to the morphism of presheaves $D_{p'}^n(v')$. 

This completes the proof of the lemma.
\end{proof}
The following problem and construction are the only ones in this section where we change our context from considering a universe category functor to simply a functor between two categories.
\begin{problem}
\llabel{2016.12.18.prob3}
Given a functor $\Phi:{\cal C}\sr {\cal C}'$ between two categories to construct, for each $Y\in {\cal C}$, a morphism of presheaves
$$yo^{\Phi,Y}:Yo(Y)\sr \Phi^{\circ}(Yo(\Phi(Y)))$$
and to show that for a morphism $g:Y\sr Y'$ the square
\begin{eq}
\llabel{2016.12.18.eq8}
\begin{CD}
Yo(Y)@>yo^{\Phi,Y}>> \Phi^{\circ}(Yo(\Phi(Y)))\\
@VYo(g) VV @VV\Phi^{\circ}(Yo(\Phi(g))) V\\
Yo(Y')@>yo^{\Phi,Y'}>> \Phi^{\circ}(Yo(\Phi(Y')))
\end{CD}
\end{eq}
commutes.
\end{problem}
\begin{construction}\rm
\llabel{2016.12.18.constr3}
We need to define, for all $X\in {\cal C}$, functions
$$Yo(Y)(X)=Mor_{\cal C}(X,Y)\sr Mor_{{\cal C}'}(\Phi(X),\Phi(Y))=\Phi^{\circ}(Yo(\Phi(Y)))(X)$$
which we define as the restriction of $\Phi_{Mor}$ to $Mor_{\cal C}(X,Y)$:
\begin{eq}
\llabel{2016.12.18.eq7}
yo^{\Phi,Y}_X(f)=\Phi(f)
\end{eq}
Let us show that this family is a morphism of presheaves, i.e., that for any $a:X'\sr X$ the square
\begin{eq}
\llabel{2016.12.18.eq5}
\begin{CD}
Yo(Y)(X) @>yo^{\Phi,Y}_{X}>> \Phi^{\circ}(Yo(\Phi(Y)))(X)\\
@VYo(Y)(a)VV @VV\Phi^{\circ}(Yo(\Phi(Y)))(a)V\\
Yo(Y)(X') @>yo^{\Phi,Y}_{X'}>> \Phi^{\circ}(Yo(\Phi(Y)))(X')
\end{CD}
\end{eq}
commutes. Note that for an element $f':\Phi(X)\sr \Phi(Y)$ of $\Phi^{\circ}(Yo(\Phi(Y)))(X)$ we have
\begin{eq}
\llabel{2016.12.18.eq6}
\Phi^{\circ}(Yo(\Phi(Y)))(a)(f')=\Phi(a)\circ f'
\end{eq}
Let $f:X\sr Y$ be an element of $Yo(Y)(X)$.

Applying one path in (\ref{2016.12.18.eq5}) to $f$ we get
$$\Phi^{\circ}(Yo(\Phi(Y)))(a)(yo^{\Phi,Y}_{X}(f))=\Phi^{\circ}(Yo(\Phi(Y)))(a)(\Phi(f))=\Phi(a)\circ \Phi(f)$$
where the first equality is by (\ref{2016.12.18.eq7}) and the second is by (\ref{2016.12.18.eq6}). 

Applying another path we get
$$yo^{\Phi,Y}_{X'}(Yo(Y)(a)(f))=\Phi Yo(Y)_{X'}(a\circ f)=\Phi(a\circ f)$$
where the first equality is by definition of $Yo(Y)$ and the second by (\ref{2016.12.18.eq7}). 

We conclude that (\ref{2016.12.18.eq5}) commutes by the composition axiom of $\Phi$. 

Let $g:Y\sr Y'$ be a morphism. Note that for an element $f':\Phi(X)\sr \Phi(Y)$ of $\Phi^{\circ}(Yo(\Phi(Y)))(X)$ we have
\begin{eq}
\llabel{2016.12.18.eq9}
\Phi^{\circ}(Yo(\Phi(g)))(f')=f'\circ \Phi(g)
\end{eq}
Let us show that the square (\ref{2016.12.18.eq8}) commutes. Let $X\in {\cal C}$ and $f\in Yo(Y)(X)$. 

Applying one path in (\ref{2016.12.18.eq8}) to $f$ we get
$$\Phi^{\circ}(Yo(\Phi(g)))(yo^{\Phi,Y}(f))=\Phi^{\circ}(Yo(\Phi(g)))(\Phi(f))=\Phi(f)\circ \Phi(g)$$
where the first equality is by (\ref{2016.12.18.eq7}) and the second by (\ref{2016.12.18.eq9}).

Applying another path we get
$$yo^{\Phi,Y'}(Yo(g)(f))=yo^{\Phi,Y'}(f\circ g)=\Phi(f\circ g)$$
where the first equality is by the definition of $Yo(g)$ and the second by (\ref{2016.12.18.eq7}). We conclude that (\ref{2016.12.18.eq8}) commutes by the composition axiom of $\Phi$.

This completes the construction. 
\end{construction}
Recall that for $X,Y\in {\cal C}$ and $n\ge 0$ we have defined in (\ref{2016.12.24.eq1}) the set $D_p^n(X,Y)$ as follows:
$$D_p^n(X,Y)=D_p^n(Yo(Y))(X)$$
We also introduced before Remark \ref{2015.07.29.rem2} the  $\circ$-notation that we will use below. 
\begin{problem}
\llabel{2016.12.18.prob2}
With assumptions as above, to define, for all $X,Y\in {\cal C}$ and $n\ge 0$, functions
$${\bf\Phi}^n_{X,Y}:D_p^n(X,Y)\sr D_{p'}^n(\Phi(X),\Phi(Y))$$
\end{problem}
\begin{construction}\rm
\llabel{2016.12.18.constr2}
Applying Construction \ref{2016.12.18.constr1} to the morphism of presheaves $yo^{\Phi,Y}$ of Construction \ref{2016.12.18.constr3} we obtain morphisms of presheaves
$$D_{\bf\Phi}^n(yo^{\Phi,Y}):D_p^n(Yo(Y))\sr \Phi^{\circ}(D_{p'}^n(Yo(\Phi(Y))))$$
Evaluating this morphism on $X$ we obtain a function
\begin{eq}
\llabel{2016.12.20.eq3}
D_p^n(X,Y) = D_p^n(Yo(Y))(X)\sr \Phi^{\circ}(D_p^n(Yo(\Phi(Y))))(X) = D_{p'}^n(\Phi(X),\Phi(Y))
\end{eq}
\end{construction}
For $n=0$ we have 
$$D_p^0(X,Y)=Yo(Y)(X)=Mor_{\cal C}(X,Y)$$
and 
\begin{eq}
\llabel{2017.01.13.eq1}
{\bf\Phi}^0_{X,Y}=\Phi_{X,Y}
\end{eq}
that is, the restriction of $\Phi_{Mor}$ to the subset $Mor_{\cal C}(X,Y)$ of $Mor({\cal C})$. 

The explicit form of the function ${\bf\Phi}^n_{X,Y}$ when $n\ge 1$ is given by the following lemma.
\begin{lemma}
\llabel{2016.12.22.l2}
In the context of Problem \ref{2016.12.18.prob2}, let $n\ge 1$, $X,Y\in {\cal C}$ and
$$(F,a)\in \amalg_{F:X\sr U} D_p^{n-1}((X;F),Y)=D_p(D_p^{n-1}(Yo(Y)))(X)=D_p^n(X,Y)$$
Then one has
$${\bf\Phi}^n_{X,Y}((F,a))=(\Phi(F)\circ \phi, \iota\circ {\bf\Phi}^{n-1}_{(X;F),Y}(a))$$
where $\iota:(\Phi(X),\Phi(F)\circ\phi)\sr \Phi((X;F))$ is the morphism defined by (\ref{2015.04.08.eq1}) and (\ref{2015.04.08.eq2}).
\end{lemma}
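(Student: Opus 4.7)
The plan is to deduce this lemma almost immediately from Lemma \ref{2016.12.22.l1} (the analogous explicit form for $D_{\bf\Phi}^n(m)$) applied to the particular morphism $m = yo^{\Phi,Y}$, followed by a translation of the presheaf action $D_{p'}^{n-1}(Yo(\Phi(Y)))(\iota)$ into the $\circ$-notation.

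First I would unfold the definition. By Construction \ref{2016.12.18.constr2}, the function ${\bf\Phi}^n_{X,Y}$ is, by definition, the value at $X$ of the morphism of presheaves
$$D_{\bf\Phi}^n(yo^{\Phi,Y}):D_p^n(Yo(Y))\sr \Phi^{\circ}(D_{p'}^n(Yo(\Phi(Y))))$$
obtained by applying Construction \ref{2016.12.18.constr1} to the morphism $yo^{\Phi,Y}:Yo(Y)\sr \Phi^{\circ}(Yo(\Phi(Y)))$ of Construction \ref{2016.12.18.constr3}. In particular, for any object $Z\in{\cal C}$, the evaluation $D_{\bf\Phi}^{n-1}(yo^{\Phi,Y})_{Z}$ coincides with ${\bf\Phi}^{n-1}_{Z,Y}$ as a function $D_p^{n-1}(Z,Y)\sr D_{p'}^{n-1}(\Phi(Z),\Phi(Y))$.

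Next I would apply Lemma \ref{2016.12.22.l1} with ${\cal F}=Yo(Y)$, ${\cal F}'=Yo(\Phi(Y))$ and $m=yo^{\Phi,Y}$, evaluated at the element $(F,a)$. This immediately yields
$${\bf\Phi}^n_{X,Y}((F,a))=\left(\Phi(F)\circ \phi,\,D_{p'}^{n-1}(Yo(\Phi(Y)))(\iota)\bigl({\bf\Phi}^{n-1}_{(X;F),Y}(a)\bigr)\right)$$
using the identification of $D_{\bf\Phi}^{n-1}(yo^{\Phi,Y})_{(X;F)}$ with ${\bf\Phi}^{n-1}_{(X;F),Y}$ from the previous step.

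Finally, it remains to recognise the second component as $\iota\circ {\bf\Phi}^{n-1}_{(X;F),Y}(a)$. This is precisely the $\circ$-notation introduced in (\ref{2016.12.24.eq2}): for any morphism $f:Z\sr Z'$ in ${\cal C}'$ and any $d\in D_{p'}^{n-1}(Z',\Phi(Y))$ one has, by definition, $f\circ d=D_{p'}^{n-1}(Yo(\Phi(Y)))(f)(d)$. Specialising to $f=\iota:(\Phi(X);\Phi(F)\circ\phi)\sr \Phi((X;F))$ and $d={\bf\Phi}^{n-1}_{(X;F),Y}(a)$ gives exactly the required identity, completing the proof. The only subtlety worth checking carefully is the bookkeeping between the two possible views of the same function (as a component of a natural transformation of presheaves on ${\cal C}$, versus as a function between sets of the form $D_p^n(X,Y)$); once this identification is in place, there is no further computation to do.
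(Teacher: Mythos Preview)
Your proposal is correct and follows essentially the same approach as the paper: unfold the definition ${\bf\Phi}^n_{X,Y}=D_{\bf\Phi}^n(yo^{\Phi,Y})_X$, apply Lemma \ref{2016.12.22.l1}, identify $D_{\bf\Phi}^{n-1}(yo^{\Phi,Y})_{(X;F)}$ with ${\bf\Phi}^{n-1}_{(X;F),Y}$, and rewrite the presheaf action $D_{p'}^{n-1}(Yo(\Phi(Y)))(\iota)$ as $\iota\circ -$ via the $\circ$-notation. The paper's proof is slightly more terse but the logical content is identical.
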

\begin{proof}
By construction we have ${\bf\Phi}^n_{X,Y}=D_{\bf\Phi}^n(yo^{\Phi,Y})_X$. By Lemma \ref{2016.12.22.l1} we have
$$D_{\bf\Phi}^n(yo^{\Phi,Y})_X((F,a))=(\Phi(F)\circ \phi, D_{p'}^{n-1}(Yo(\Phi(Y)))(\iota)(D_{\bf\Phi}^{n-1}(yo^{\Phi,Y})_{(X;F)}(a)))$$
Again by construction we have ${\bf\Phi}^{n-1}_{(X;F),Y}=D_{\bf\Phi}^{n-1}(yo^{\Phi,Y})_{(X;F)}$ and $D_{p'}^{n-1}(Yo(\Phi(Y)))(\iota)=D_{p'}^{n-1}(\iota,Y)=\iota\circ -$. The lemma is proved.
\end{proof}
\begin{lemma}
\llabel{2016.12.20.l1}
In the context of Construction \ref{2016.12.18.constr2} one has:
\begin{enumerate}
\item let $f:X'\sr X$ be a morphism, then the square
\begin{eq}
\llabel{2016.12.20.eq1}
\begin{CD}
D_p^n(X,Y) @>{\bf\Phi}^n_{X,Y}>> D_{p'}^n(\Phi(X),\Phi(Y))\\
@VD_{p}^n(f,Y)VV @VVD_{p'}^n(\Phi(f),\Phi(Y))V\\
D_p^n(X',Y) @>{\bf\Phi}^n_{X',Y}>> D_{p'}^n(\Phi(X'),\Phi(Y))
\end{CD}
\end{eq}
commutes,
\item let $g:Y\sr Y'$ be a morphism, then the square
\begin{eq}
\llabel{2016.12.20.eq2}
\begin{CD}
D_p^n(X,Y) @>{\bf\Phi}^n_{X,Y}>> D_{p'}^n(\Phi(X),\Phi(Y))\\
@VD_{p}^n(X,g)VV @VVD_{p'}^n(\Phi(X),\Phi(g))V\\
D_p^n(X,Y') @>{\bf\Phi}^n_{X,Y'}>> D_{p'}^n(\Phi(X),\Phi(Y'))
\end{CD}
\end{eq}
commutes.
\end{enumerate}
\end{lemma}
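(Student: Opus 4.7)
The plan is to unfold the definition ${\bf\Phi}^n_{X,Y} = D_{\bf\Phi}^n(yo^{\Phi,Y})_X$ given by Construction \ref{2016.12.18.constr2} and then reduce each square to a commutativity statement we already know. The first square is naturality in the first argument of the bifunctor-like construction $D_p^n(-,-)$, and the second is naturality in the second argument.

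For square (\ref{2016.12.20.eq1}), I would observe that $D_{\bf\Phi}^n(yo^{\Phi,Y})$ is a morphism of presheaves from $D_p^n(Yo(Y))$ to $\Phi^{\circ}(D_{p'}^n(Yo(\Phi(Y))))$ by Construction \ref{2016.12.18.constr1}. Evaluated at $f:X'\to X$, the presheaf-morphism compatibility gives
\[
D_{\bf\Phi}^n(yo^{\Phi,Y})_{X'} \circ D_p^n(Yo(Y))(f) = \Phi^{\circ}(D_{p'}^n(Yo(\Phi(Y))))(f)\circ D_{\bf\Phi}^n(yo^{\Phi,Y})_{X}.
\]
By the definition of $D_p^n(f,Y)$ in (\ref{2016.12.24.eq2}) the left vertical arrow in (\ref{2016.12.20.eq1}) is $D_p^n(Yo(Y))(f)$, and by the definition of $\Phi^{\circ}$ the right vertical arrow is $D_{p'}^n(Yo(\Phi(Y)))(\Phi(f))=D_{p'}^n(\Phi(f),\Phi(Y))$. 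So the square is exactly the naturality of $D_{\bf\Phi}^n(yo^{\Phi,Y})$ in $X$.

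For square (\ref{2016.12.20.eq2}), I would start from the commutative square (\ref{2016.12.18.eq8}) of Construction \ref{2016.12.18.constr3}, namely
\[
Yo(g)\circ yo^{\Phi,Y'} = yo^{\Phi,Y}\circ \Phi^{\circ}(Yo(\Phi(g))),
\]
and apply Lemma \ref{2016.12.18.l1} with ${\cal F}_1=Yo(Y)$, ${\cal F}_2=Yo(Y')$, ${\cal F}_1'=Yo(\Phi(Y))$, ${\cal F}_2'=Yo(\Phi(Y'))$, $v=Yo(g)$, $v'=Yo(\Phi(g))$, $m_1=yo^{\Phi,Y}$, $m_2=yo^{\Phi,Y'}$. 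This yields commutativity of
\[
D_p^n(Yo(g))\circ D_{\bf\Phi}^n(yo^{\Phi,Y'}) = D_{\bf\Phi}^n(yo^{\Phi,Y})\circ \Phi^{\circ}(D_{p'}^n(Yo(\Phi(g)))).
\]
Evaluating at $X$ and recognizing $D_p^n(Yo(g))_X = D_p^n(X,g)$ and $\Phi^{\circ}(D_{p'}^n(Yo(\Phi(g))))_X = D_{p'}^n(\Phi(X),\Phi(g))$ by (\ref{2016.12.24.eq3}), we get precisely square (\ref{2016.12.20.eq2}).

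There is no serious obstacle here: both parts follow by directly unpacking the definition of ${\bf\Phi}^n$ and citing, respectively, the fact that $D_{\bf\Phi}^n(-)$ always produces a morphism of presheaves (for part 1) and Lemma \ref{2016.12.18.l1} applied to the naturality square (\ref{2016.12.18.eq8}) of $yo^{\Phi,-}$ (for part 2). The only mildly delicate point is the bookkeeping that turns the abstract presheaf-level equalities into the explicit squares written in terms of $D_p^n(f,Y)$ and $D_p^n(X,g)$, but this is purely a matter of translating the $\circ$-notation through definitions (\ref{2016.12.24.eq2}) and (\ref{2016.12.24.eq3}).
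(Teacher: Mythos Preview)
Your proposal is correct and follows essentially the same approach as the paper: part~1 is precisely the naturality of the presheaf morphism $D_{\bf\Phi}^n(yo^{\Phi,Y})$ in $X$, and part~2 is Lemma~\ref{2016.12.18.l1} applied to the square~(\ref{2016.12.18.eq8}). The only minor point is that your displayed naturality equation appears to be written in right-to-left composition order rather than the paper's diagrammatic order, but the argument itself is unaffected.
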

\begin{proof}
Commutativity of (\ref{2016.12.20.eq1}) follows from (\ref{2016.12.20.eq3}) and the fact that $D_{\bf\Phi}^n(yo^{\Phi,Y})$ is a morphism of presheaves.

Commutativity of (\ref{2016.12.20.eq2}) follows from (\ref{2016.12.20.eq3}), the commutativity of (\ref{2016.12.18.eq8}) and Lemma \ref{2016.12.18.l1}.
\end{proof}
In the $\circ$-notation the assertion of Lemma \ref{2016.12.20.l1} looks as follows. Let $d\in D_p^n(X,Y)$. Then for $f:X'\sr X$ one has 
\begin{eq}
\llabel{2017.01.05.eq1}
\Phi(f)\circ {\bf\Phi}^n(d)={\bf\Phi}^n(f\circ d)
\end{eq}
and for $g:Y\sr Y'$ one has
\begin{eq}
\llabel{2017.01.05.eq2}
{\bf\Phi}^n(d)\circ \Phi(g)={\bf\Phi}^n(d\circ g)
\end{eq}

\subsection{Universe category functors and isomorphisms $u_n$ and $\wt{u}_n$}
\llabel{Sec.8}

By \cite[Construction 4.7]{Cfromauniverse} any universe category functor ${\bf \Phi}=(\Phi,\phi,\wt{\phi})$ from $({\cal C},p)$ to $({\cal C}',p)$ defines a homomorphism of C-systems
$$H:CC({\cal C},p)\sr CC({\cal C}',p')$$
Let $\psi_0:pt'\sr \Phi(pt)$ be the unique morphism. To define $H$ on objects, one uses the fact that
$$Ob(CC({\cal C},p))=\amalg_{n\ge 0} Ob_n({\cal C},p)$$
and defines $H(n,A)$ as $(n,H_n(A))$ where
$$H_n:Ob_n({\cal C},p)\sr Ob_n({\cal C}',p')$$
To obtain $H_n$ one defines by induction on $n$, pairs $(H_n,\psi_n)$ where $H_n$ is as above and $\psi_n$ is a family of isomorphisms 
$$\psi_{n}(A):int_n(H_n(A))\sr \Phi(int_n(A))$$
as follows:
\begin{enumerate}
\item for $n=0$, $H_0$ is the unique function from a one point set to a one point set and $\psi_{0}(A)=\psi_0$,
\item for the successor of $n$ one has 
\begin{eq}
\llabel{2016.12.10.eq1}
H_{n+1}(A,F)=(H_n(A),\psi_n(A)\circ\Phi(F)\circ \phi)
\end{eq}
and $\psi_{n+1}(A,F)$ is the unique morphism $int(H(A,F))\sr \Phi(int(A,F))$ such that
\begin{eq}
\llabel{2016.12.10.eq2}
\psi_{n+1}(A,F)\circ \Phi(Q(F))\circ\wt{\phi}=Q(\psi_n(A)\circ\Phi(F)\circ\phi)
\end{eq}
and
\begin{eq}
\llabel{2016.12.10.eq3}
\psi_{n+1}(A,F)\circ \Phi(p_{F})=p_{\psi_n(A)\circ\Phi(F)\circ \phi}\circ \psi_n(A)
\end{eq}
\end{enumerate}
The function $H:Ob(CC({\cal C},p))\sr Ob(CC({\cal C}',p'))$ is the sum of functions $H_n$. For $\Gamma=(n,A)$ in $Ob(CC({\cal C},p))$ we let $\psi(\Gamma)=\psi_n(A)$ so that $\psi$ is the sum of families $\psi_n$:
$$\psi(\Gamma):int(H(\Gamma))\sr \Phi(int(\Gamma))$$

The action of $H$ on morphisms is given by the condition that for $f:\Gamma'\sr\Gamma$, $H(f)$ is a unique morphism of the form $H(\Gamma')\sr H(\Gamma)$ such that
\begin{eq}
\llabel{2016.12.10.eq4}
int(H(f))=\psi(\Gamma')\circ\Phi(int(f))\circ\psi(\Gamma)^{-1}
\end{eq}
We will often write $H$ also for the functions $H_n$ and $\psi$ for the functions $\psi_n$. 
\begin{lemma}
\llabel{2016.12.20.l2}
The family of morphisms 
$$\psi(\Gamma):int(H(\Gamma))\sr \Phi(int(\Gamma))$$
is a natural isomorphism of functors 
$$\psi:H\circ int\sr int\circ \Phi$$
\end{lemma}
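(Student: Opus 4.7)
My plan has two parts: establish that each $\psi(\Gamma)$ is an isomorphism, and verify the naturality square $\psi(\Gamma') \circ \Phi(int(f)) = int(H(f)) \circ \psi(\Gamma)$ for every $f: \Gamma' \to \Gamma$ in $CC({\cal C}, p)$.

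I would dispatch naturality first, as it is essentially immediate from the definition of $H$ on morphisms: equation (\ref{2016.12.10.eq4}) reads $int(H(f)) = \psi(\Gamma') \circ \Phi(int(f)) \circ \psi(\Gamma)^{-1}$, and post-composing both sides on the right with $\psi(\Gamma)$ yields the required commutativity $int(H(f)) \circ \psi(\Gamma) = \psi(\Gamma') \circ \Phi(int(f))$. The one subtlety is that this argument presupposes the invertibility of $\psi(\Gamma)$, so the isomorphism property must be handled first.

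The isomorphism property is proved by induction on $n = l(\Gamma)$. For $n = 0$ the morphism $\psi_0: pt' \to \Phi(pt)$ is an isomorphism because $\Phi(pt)$ is a final object by Definition \ref{2016.12.09.def1}(1). For the inductive step with $\Gamma = (A, F)$ I would exhibit $\psi_{n+1}(A, F)$ as a comparison morphism between two pullbacks of $p'$ along the common morphism $\psi_n(A) \circ \Phi(F) \circ \phi: int(H(A)) \to U'$. The first pullback is the canonical one $(int(H(A)); \psi_n(A) \circ \Phi(F) \circ \phi)$ supplied by the universe structure on $p'$, with projections $p_{\psi_n(A) \circ \Phi(F) \circ \phi}$ and $Q(\psi_n(A) \circ \Phi(F) \circ \phi)$. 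The second pullback has vertex $\Phi((int(A); F))$: applying $\Phi$ to the canonical pullback based on $F$ produces a pullback by Definition \ref{2016.12.09.def1}(2), pasting with the pullback (\ref{2015.03.21.sq1}) yields a pullback of $p'$ along $\Phi(F) \circ \phi$, and finally precomposing the left projection $\Phi(p_F)$ with the isomorphism $\psi_n(A)^{-1}$ available from the inductive hypothesis converts it into a pullback along $\psi_n(A) \circ \Phi(F) \circ \phi$ (with right projection $\Phi(Q(F)) \circ \wt{\phi}$). The characterizing equations (\ref{2016.12.10.eq2}) and (\ref{2016.12.10.eq3}) identify $\psi_{n+1}(A, F)$ as precisely the induced map between these two pullbacks of the same cospan, hence an isomorphism.

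I do not anticipate a serious obstacle here: the construction of $\psi$ in \cite{Cfromauniverse} is explicitly engineered so that both conditions hold, and the only technical content is the pullback pasting argument in the inductive step, which proceeds via routine applications of Definition \ref{2016.12.09.def1}(2)--(3). The most easily overlooked point is keeping track of the precomposition by $\psi_n(A)^{-1}$ needed to move the base of the pasted pullback from $\Phi(int(A))$ to $int(H(A))$.
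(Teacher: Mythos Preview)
Your proposal is correct and the naturality argument is exactly the paper's: both simply observe that the required square is a rewriting of equation (\ref{2016.12.10.eq4}). The one difference is that the paper does not re-prove that each $\psi(\Gamma)$ is an isomorphism; it treats this as part of the recalled construction from \cite{Cfromauniverse}, where the $\psi_n$ are already declared to be ``a family of isomorphisms'' (see the paragraph preceding (\ref{2016.12.10.eq1})). Your inductive pullback-pasting argument is precisely how that fact is established in \cite{Cfromauniverse}, so you have supplied a self-contained proof where the paper simply cites.
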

\begin{proof}
By construction, $\psi(\Gamma)$ is a family of morphisms of the form 
$(H\circ int)(\Gamma)\sr (int\circ \Phi)(\Gamma)$. It remains to verify that for $f:\Gamma'\sr \Gamma$ one has
$$\psi(\Gamma)\circ (int\circ \Phi)(f)=(H\circ int)(f)\circ \psi(\Gamma')$$
This equality is equivalent to (\ref{2016.12.10.eq4}). 
\end{proof}
The following lemma describes the interaction between the isomorphisms $u_1$ and $\wt{u}_1$ and universe category functors. 
\begin{lemma}
\llabel{2015.03.21.l4}
Let $(\Phi,\phi,\wt{\phi})$ be universe category functor. Then:
\begin{enumerate}
\item for $T\in Ob_1(\Gamma)$ one has 
$$u_{1,H(\Gamma)}(H(T))=\psi(\Gamma)\circ\Phi(u_{1,\Gamma}(T))\circ\phi$$
\item for $o\in \wOb_1(\Gamma)$ one has
$$\wt{u}_{1,H(\Gamma)}(H(o))=\psi(\Gamma)\circ\Phi(\wt{u}_{1,\Gamma}(o))\circ\wt{\phi}$$ 
\end{enumerate}
\end{lemma}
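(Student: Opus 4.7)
For part (1), the plan is to unwind both sides by the explicit formula (\ref{2015.04.30.eq3a}) for $u_1$ and the inductive formula (\ref{2016.12.10.eq1}) for $H_{n+1}$. Write $\Gamma=(n,A)$ and, by Lemma \ref{2016.08.22.l1}, $T=(n+1,(A,F))$ with $F=u_{1,\Gamma}(T)$. Then (\ref{2016.12.10.eq1}) gives
$$H(T)=(n+1,(H_n(A),\psi_n(A)\circ\Phi(F)\circ\phi)),$$
so $H(T)\in\Ob_1(H(\Gamma))$ is of the form required by Lemma \ref{2016.08.22.l1} with second coordinate $\psi(\Gamma)\circ\Phi(u_{1,\Gamma}(T))\circ\phi$. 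A second application of (\ref{2015.04.30.eq3a}) then reads off $u_{1,H(\Gamma)}(H(T))$ as precisely this expression.

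For part (2), the plan is to use (\ref{2015.04.30.eq4a}) to rewrite both sides and reduce to part (1) plus one key identity from the construction of $H$. First, $\partial(H(o))=H(\partial(o))$ since $H$ preserves codomains, so
$$\wt{u}_{1,H(\Gamma)}(H(o))=int(H(o))\circ Q\bigl(u_{1,H(\Gamma)}(H(\partial(o)))\bigr).$$
Apply part (1) to replace $u_{1,H(\Gamma)}(H(\partial(o)))$ by $\psi(\Gamma)\circ\Phi(u_{1,\Gamma}(\partial(o)))\circ\phi$, and apply (\ref{2016.12.10.eq4}) (the defining equation of $H$ on morphisms), with $o:\Gamma\sr\partial(o)$ viewed as a morphism in $CC({\cal C},p)$, to replace $int(H(o))$ by $\psi(\Gamma)\circ\Phi(int(o))\circ\psi(\partial(o))^{-1}$. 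The right hand side of the desired equality, after expanding via (\ref{2015.04.30.eq4a}) and the functoriality of $\Phi$, becomes
$$\psi(\Gamma)\circ\Phi(int(o))\circ\Phi(Q(u_{1,\Gamma}(\partial(o))))\circ\wt{\phi}.$$
Thus the identity we need is exactly
$$\psi(\partial(o))^{-1}\circ Q\bigl(\psi(\Gamma)\circ\Phi(u_{1,\Gamma}(\partial(o)))\circ\phi\bigr)=\Phi(Q(u_{1,\Gamma}(\partial(o))))\circ\wt{\phi},$$
which, after pre-composing with $\psi(\partial(o))$, is precisely (\ref{2016.12.10.eq2}) applied with the pair $(A,F)=(A,u_{1,\Gamma}(\partial(o)))$ representing $\partial(o)$ according to Lemma \ref{2016.08.22.l1}.

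The only real obstacle is bookkeeping: one must be careful that under the identification $\partial(o)=(n+1,(A,u_{1,\Gamma}(\partial(o))))$, the object $\psi(\partial(o))$ coincides with $\psi_{n+1}(A,u_{1,\Gamma}(\partial(o)))$ so that the defining equation (\ref{2016.12.10.eq2}) of $\psi$ on the successor applies verbatim. Once this matching is made explicit, the proof of part (2) is a straightforward chain of substitutions using the already-cited equalities (\ref{2015.04.30.eq4a}), (\ref{2016.12.10.eq4}), part (1), and (\ref{2016.12.10.eq2}), together with functoriality of $\Phi$.
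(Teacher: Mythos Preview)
Your proposal is correct and follows essentially the same approach as the paper: part (1) is identical (unwind via (\ref{2015.04.30.eq3a}) and (\ref{2016.12.10.eq1})), and for part (2) both you and the paper combine (\ref{2015.04.30.eq4a}), the definition (\ref{2016.12.10.eq4}) of $H$ on morphisms, part (1), and the defining equation (\ref{2016.12.10.eq2}) of $\psi_{n+1}$. The only cosmetic difference is that the paper runs the argument as a single left-to-right chain of equalities, whereas you expand both sides and isolate the key identity $\psi(\partial(o))\circ\Phi(Q(F))\circ\wt{\phi}=Q(\psi(\Gamma)\circ\Phi(F)\circ\phi)$ explicitly before invoking (\ref{2016.12.10.eq2}); the content is the same.
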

\begin{proof}
Let $\Gamma=(n,A)$.

In the case of $T\in Ob_1(\Gamma)$, if $T=(n+1,(A,F))$ then
$$u_1(H(T))=u_1(n+1,H(A,F))=u_1(n+1,(H(A), \psi(\Gamma)\circ\Phi(F)\circ\phi))=\psi(\Gamma)\circ\Phi(F)\circ\phi$$
where the last equality is by (\ref{2015.04.30.eq3a}). 

In the case of $o\in \wOb_1(\Gamma)$, if $\partial(o)=(n+1,(A,F))$ then $\partial(H(o))=(n+1,H(A,F))$. Since $o:\Gamma\sr \partial(o)$ we have 
\begin{eq}
\llabel{2016.12.10.eq6}
H(o)=\psi(\Gamma)\circ \Phi(int(o))\circ \psi(A,F)^{-1}
\end{eq}
and 
$$\wt{u}_1(H(o))=
H(o)\circ Q(u_1(n+1,H(A,F)))=
H(o)\circ Q(\psi(A)\circ \Phi(F)\circ \phi)=
$$$$H(o)\circ \psi(A,F)\circ \Phi(Q(F))\circ \wt{\phi}=
\psi(\Gamma)\circ \Phi(int(o))\circ \psi(A,F)^{-1}\circ \psi(A,F)\circ \Phi(Q(F))\circ \wt{\phi}=
$$$$\psi(\Gamma)\circ \Phi(int(o))\circ \Phi(Q(F))\circ \wt{\phi}=
\psi(\Gamma)\circ \Phi(int(o)\circ Q(F))\circ \wt{\phi}=
$$$$\psi(\Gamma)\circ \Phi(\wt{u}_1(o))\circ \wt{\phi}
$$
where the first equality is by (\ref{2015.04.30.eq4a}), the second by (\ref{2016.12.10.eq1}) and (\ref{2015.04.30.eq3a}), the third by (\ref{2016.12.10.eq2}), the fourth by (\ref{2016.12.10.eq6}) and the seventh again by (\ref{2015.04.30.eq4a}).
\end{proof}
We now want to express the assertion of Lemma \ref{2015.03.21.l4} in terms of the commutativity of a diagram of natural transformations of presheaves on $CC({\cal C},p)$.

We will use the natural transformation $\psi^{\circ}$ that $\psi$ defines on the corresponding functors between the categories of presheaves. Note that for a natural transformation $a:\Phi_1\sr \Phi_2$ of functors of the form ${\cal C}\sr {\cal C}'$ and a presheaf $F'$ on ${\cal C}'$ we have
$$\Phi_2^{\circ}(F')(X)=F'(\Phi_2(X))\sr F'(\Phi_1(X))=\Phi_1^{\circ}(F')(X)$$
that is, for $a:\Phi_1\sr \Phi_2$ we have $a^{\circ}:\Phi_2^{\circ}\sr \Phi_1^{\circ}$. In particular, in the case of $\psi$ we have:
$$\psi^{\circ}:\Phi^{\circ}\circ int^{\circ}=(int\circ \Phi)^{\circ}\sr (H\circ int)^{\circ}=int^{\circ}\circ H^{\circ}$$
\begin{lemma}
\llabel{2016.12.20.l3}
In the context of Lemma \ref{2015.03.21.l4} the following two diagrams of natural transformations of presheaves on $CC({\cal C},p)$ commute:
$$
\begin{CD}
\Ob_1 @>u_1>> int^{\circ}(Yo(U))\\
@. @VV int^{\circ}(yo^{\Phi,U}) V\\
@.  int^{\circ}(\Phi^{\circ}(Yo(\Phi(U))))\\
@VVH\Ob_1 V @VVint^{\circ}(\Phi^{\circ}(Yo(\phi))) V\\
@. int^{\circ}(\Phi^{\circ}(Yo(U')))\\
@. @VV\psi^{\circ}_{Yo(U')} V\\
H^{\circ}(\Ob_1) @>H^{\circ}(u_1)>> H^{\circ}(int^{\circ}(Yo(U'))
\end{CD}
\spc\spc\spc\spc
\begin{CD}
\wOb_1 @>\wt{u}_1>> int^{\circ}(Yo(\wt{U}))\\
@. @VV int^{\circ}(yo^{\Phi,\wt{U}}) V\\
@.  int^{\circ}(\Phi^{\circ}(Yo(\Phi(\wt{U}))))\\
@VVH\wOb_1 V @VVint^{\circ}(\Phi^{\circ}(Yo(\wt{\phi}))) V\\
@. int^{\circ}(\Phi^{\circ}(Yo(\wt{U}')))\\
@. @VV\psi^{\circ}_{Yo(\wt{U}')} V\\
H^{\circ}(\wOb_1) @>H^{\circ}(\wt{u}_1)>> H^{\circ}(int^{\circ}(Yo(\wt{U}'))
\end{CD}
$$
\end{lemma}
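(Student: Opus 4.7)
The plan is to evaluate each of the two diagrams pointwise and reduce commutativity to the identities already established in Lemma \ref{2015.03.21.l4}. Since two morphisms of presheaves (equivalently, two natural transformations) are equal if and only if their components agree at every object, it suffices to fix $\Gamma \in CC({\cal C},p)$, apply each arrow to an arbitrary section of the top-left presheaf, and show that the two resulting elements of the bottom-right presheaf coincide.

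For the first diagram fix $\Gamma$ and $T \in \Ob_1(\Gamma)$. Down the left column, the natural transformation $H\Ob_1$ sends $T$ to $H(T) \in \Ob_1(H(\Gamma)) = H^{\circ}(\Ob_1)(\Gamma)$, and then $H^{\circ}(u_1)$, whose $\Gamma$-component is $u_{1,H(\Gamma)}$, produces $u_{1,H(\Gamma)}(H(T))$. Across the top and down the right column we apply successively: $u_{1,\Gamma}$, yielding $u_{1,\Gamma}(T) : int(\Gamma) \sr U$; the component of $int^{\circ}(yo^{\Phi,U})$ at $\Gamma$, which by Construction \ref{2016.12.18.constr3} is just the action of $\Phi$ on morphisms, giving $\Phi(u_{1,\Gamma}(T)) : \Phi(int(\Gamma)) \sr \Phi(U)$; the component of $int^{\circ}(\Phi^{\circ}(Yo(\phi)))$, which is postcomposition with $\phi$; and finally $\psi^{\circ}_{Yo(U')}$ at $\Gamma$. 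For a natural transformation $a:\Phi_1 \sr \Phi_2$ of functors ${\cal C} \sr {\cal C}'$, the induced $a^{\circ}$ on the representable $Yo(Y')$ is precomposition with $a$; applied here, $\psi^{\circ}_{Yo(U')}$ at $\Gamma$ is precomposition with $\psi(\Gamma):int(H(\Gamma)) \sr \Phi(int(\Gamma))$. The right-and-down path thus gives $\psi(\Gamma) \circ \Phi(u_{1,\Gamma}(T)) \circ \phi$, and its equality with the left-and-down result $u_{1,H(\Gamma)}(H(T))$ is exactly Lemma \ref{2015.03.21.l4}(1).

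The second diagram is handled identically, with $\Ob_1, u_1, U, \phi$ replaced by $\wOb_1, \wt{u}_1, \wt{U}, \wt\phi$ throughout; starting from $o \in \wOb_1(\Gamma)$, the right-and-down path evaluates to $\psi(\Gamma)\circ \Phi(\wt{u}_{1,\Gamma}(o))\circ \wt\phi$, which coincides with $\wt{u}_{1,H(\Gamma)}(H(o))$ by Lemma \ref{2015.03.21.l4}(2).

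No step is hard; the only place where care is required is the bookkeeping of which category each presheaf lives on and the correct identification of the last column with precomposition by $\psi(\Gamma)$. The two unprimed $int^{\circ}$ functors go from presheaves on ${\cal C}$ to presheaves on $CC({\cal C},p)$, while $H^{\circ}$ transports presheaves on $CC({\cal C}',p')$ back to $CC({\cal C},p)$; once the direction of $\psi^{\circ}$ is read off correctly from the general rule that $a:\Phi_1\sr \Phi_2$ induces $a^{\circ}:\Phi_2^{\circ}\sr \Phi_1^{\circ}$, the diagram chase collapses into the cited lemma.
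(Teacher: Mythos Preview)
Your proof is correct and follows essentially the same approach as the paper: evaluate each diagram at an arbitrary $\Gamma$ and section, unwind the right-hand column to $\psi(\Gamma)\circ\Phi(u_{1,\Gamma}(T))\circ\phi$ (respectively with tildes), and invoke Lemma \ref{2015.03.21.l4}. The paper's own proof is slightly terser but makes exactly the same computation.
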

\begin{proof}
Consider the first diagram. For $\Gamma$ and $T\in \Ob_1(\Gamma)$ one path in the diagram applied to $T$ gives us
$$\psi^{\circ}(int^{\circ}(\Phi^{\circ}(Yo(\phi)))(int^{\circ}(yo^{\Phi,U}(u_{1,\Gamma}(T)))))=
\psi^{\circ}(int^{\circ}(\Phi^{\circ}(Yo(\phi)))(\Phi(u_{1,\Gamma}(T))))=
$$$$\psi^{\circ}(\Phi(u_{1,\Gamma}(T))\circ \phi)=
\psi(\Gamma)\circ \Phi(u_{1,\Gamma}(T))\circ \phi$$
while the other path gives
$$H^{\circ}(u_1)(H\Ob_1(T))=u_{1,H(\Gamma)}(H(T))$$
The equality of these two expressions is the statement of Lemma \ref{2015.03.21.l4}(1).

The case of the second diagram is strictly parallel. The lemma is proved.
\end{proof}

Consider now isomorphisms $u_{n}$ and $\wt{u}_n$ for general $n\ge 1$. 
\begin{lemma}
\llabel{2016.12.20.l4}
Let ${\bf\Phi}=(\Phi,\phi,\wt{\phi})$ be a universe category functor and $n\ge 1$. Then
\begin{enumerate}
\item for $T\in \Ob_n(\Gamma)$ one has
\begin{eq}
\llabel{2016.12.20.eq5}
u_{n,H(\Gamma)}(H(T))=\psi(\Gamma)\circ ({\bf\Phi}^{n-1}_{int(\Gamma),U}(u_{n,\Gamma}(T))\circ \phi)
\end{eq}
\item for $o\in \wOb_n(\Gamma)$ one has
\begin{eq}
\llabel{2016.12.20.eq6}
\wt{u}_{n,H(\Gamma)}(H(o))=\psi(\Gamma)\circ ({\bf\Phi}^{n-1}_{int(\Gamma),\wt{U}}(\wt{u}_{n,\Gamma}(o))\circ \wt{\phi})
\end{eq}
\end{enumerate}
\end{lemma}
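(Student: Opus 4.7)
The plan is to prove both assertions simultaneously by induction on $n \ge 1$. The base case $n = 1$ is exactly Lemma \ref{2015.03.21.l4}. For the inductive step I will unpack both sides of (\ref{2016.12.20.eq5}) using the formulas (\ref{2016.12.22.eq1}) for $u_{n+1}$ and Lemma \ref{2016.12.22.l2} for ${\bf\Phi}^n$, then reduce to a single coherence identity between $\psi$, the universe reindexing $Q(-,-)$ and the comparison morphism $\iota_{\bf\Phi}$.

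In detail, let $T \in \Ob_{n+1}(\Gamma)$ and write $T' = ft^n(T) \in \Ob_1(\Gamma)$, so $T \in \Ob_n(T')$. Since $H$ is a C-system homomorphism it commutes with $ft$, hence $ft^n(H(T)) = H(T')$, and (\ref{2016.12.22.eq1}) gives
$$u_{n+1,H(\Gamma)}(H(T)) = \big(u_{1,H(\Gamma)}(H(T')),\; u_{n,H(T')}(H(T))\big).$$
On the other side, (\ref{2016.12.22.eq1}) and Lemma \ref{2016.12.22.l2} give
$${\bf\Phi}^n(u_{n+1,\Gamma}(T)) = \big(\Phi(u_{1,\Gamma}(T'))\circ\phi,\; \iota\circ {\bf\Phi}^{n-1}(u_{n,T'}(T))\big),$$
where $\iota = \iota_{\bf\Phi}^{int(\Gamma),\, u_{1,\Gamma}(T')}$. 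Applying Lemma \ref{2016.12.24.l1} (parts 1 and 2) to postcompose with $\phi$ and precompose with $\psi(\Gamma)$, the right hand side of (\ref{2016.12.20.eq5}) becomes the pair
$$\big(\psi(\Gamma)\circ\Phi(u_{1,\Gamma}(T'))\circ\phi,\; Q(\psi(\Gamma),\Phi(u_{1,\Gamma}(T'))\circ\phi)\circ \iota\circ {\bf\Phi}^{n-1}(u_{n,T'}(T))\circ\phi\big).$$
The first components match by the base case applied to $T'$. Substituting the inductive hypothesis $u_{n,H(T')}(H(T)) = \psi(T')\circ({\bf\Phi}^{n-1}(u_{n,T'}(T))\circ\phi)$ into the second components reduces the entire inductive step to the coherence identity
$$\psi(T') \;=\; Q(\psi(\Gamma),\,\Phi(u_{1,\Gamma}(T'))\circ\phi)\circ \iota_{\bf\Phi}^{int(\Gamma),\, u_{1,\Gamma}(T')}.$$

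This coherence identity is the main obstacle, and the way to settle it is to invoke the uniqueness clause in the definition of $\psi_{n+1}(A, F)$, where $\Gamma = (n_0,A)$ and $F = u_{1,\Gamma}(T')$. Concretely, I would compose the right hand side of the identity on the right with $\Phi(p_F)$ and with $\Phi(Q(F))\circ\wt{\phi}$ and verify, using (\ref{2015.04.08.eq1}) and (\ref{2015.04.08.eq2}) for $\iota_{\bf\Phi}$ together with the defining equations (\ref{2016.11.10.eq1a}) and (\ref{2016.11.10.eq1b}) for $Q(\psi(\Gamma),\,\Phi(F)\circ\phi)$, that the resulting compositions equal $p_{\psi(\Gamma)\circ\Phi(F)\circ\phi}\circ\psi(\Gamma)$ and $Q(\psi(\Gamma)\circ\Phi(F)\circ\phi)$ respectively. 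These are precisely the characterizing equations (\ref{2016.12.10.eq3}) and (\ref{2016.12.10.eq2}) for $\psi(T')=\psi_{n_0+1}(A,F)$, so the uniqueness from the construction of $\psi$ forces the identity.

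The proof of the $\wt{u}_n$ assertion (\ref{2016.12.20.eq6}) proceeds along parallel lines. One uses (\ref{2016.12.22.eq2}) in place of (\ref{2016.12.22.eq1}), the fact that $H$ commutes with $\partial$ (immediate from (\ref{2016.12.10.eq4}) and the description of sections), and the base case Lemma \ref{2015.03.21.l4}(2) together with the same coherence identity for $\psi$, $Q$ and $\iota_{\bf\Phi}$. No new geometric input is required beyond what is used for the $u_n$ case, so it can be written as a strict parallel of the argument above, with $U$ replaced by $\wt{U}$ and $\phi$ by $\wt{\phi}$ at the last compositions.
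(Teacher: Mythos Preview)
Your proposal is correct and follows essentially the same approach as the paper's own proof: induction on $n$ with base case Lemma~\ref{2015.03.21.l4}, unpacking both sides via (\ref{2016.12.22.eq1}), Lemma~\ref{2016.12.22.l2}, and Lemma~\ref{2016.12.24.l1}, and reducing to the coherence identity $\psi(T') = Q(\psi(\Gamma),\Phi(u_{1,\Gamma}(T'))\circ\phi)\circ \iota_{\bf\Phi}^{int(\Gamma),\,u_{1,\Gamma}(T')}$, which is then verified by checking the two defining equations (\ref{2016.12.10.eq2}) and (\ref{2016.12.10.eq3}) for $\psi(T')$. The paper carries out exactly this plan, citing (\ref{2016.08.24.eq4}) and (\ref{2016.12.02.eq4}) where you cite (\ref{2016.11.10.eq1a}) and (\ref{2016.11.10.eq1b}), which amounts to the same thing.
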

\begin{proof}
Let us verify first that the right hand side of (\ref{2016.12.20.eq5}) is defined and belongs to the same set as the left hand side. 

By (\ref{2016.11.12.eq2}) and (\ref{2016.11.22.eq1}), we have $u_{n,\Gamma}(T)\in D_p^{n-1}(int(\Gamma),U)$. Therefore, 
$${\bf\Phi}^{n-1}_{int(\Gamma),U}(u_{n,\Gamma}(T))\in D_{p'}^{n-1}(\Phi(int(\Gamma)),\Phi(U))$$
Since $\phi:\Phi(U)\sr U'$ and $\psi(\Gamma):int(H(\Gamma))\sr \Phi(int(\Gamma))$ we have
$$\psi(\Gamma)\circ ({\bf\Phi}^{n-1}_{int(\Gamma),U}(u_{n,\Gamma}(T))\circ \phi)\in D_{p'}^{n-1}(int(H(\Gamma)),U')$$
on the other hand $u_{n,H(\Gamma)}(H(T))$ is an element of $D_{p'}^{n-1}(int(H(\Gamma)),U')$ as well. Therefore, (\ref{2016.12.20.eq5}) is an equality between two elements of the same set.

To prove (\ref{2016.12.20.eq5}) we proceed by induction on $n$. 
For $n=1$ this equality is the same as the equality of Lemma \ref{2015.03.21.l4}(1). 

For the successor of $n\ge 1$ we reason as follows. Let $T'=ft^n(T)\in\Ob_1(\Gamma)$ and let us abbreviate $u_{i,-}$ to $u_n$. By (\ref{2016.12.22.eq1}) and since $H$ commutes with $ft$ we have
\begin{eq}
\llabel{2016.12.24.eq4}
u_{n+1}(H(T))=(u_{1}(ft^n(H(T))),u_{n}(H(T)))=(u_{1}(H(T')),u_{n}(H(T)))
\end{eq}
By the inductive assumption we have
\begin{eq}
\llabel{2016.12.24.eq5}
u_{n}(H(T))=
\psi(T')\circ ({\bf\Phi}^{n-1}_{int(T'),U}(u_{n}(T))\circ \phi)
\end{eq}
On the other hand, by (\ref{2016.12.22.eq1}), Lemma \ref{2016.12.22.l2} and (\ref{2015.05.02.eq1a}) we have
$${\bf\Phi}^n_{int(\Gamma),U}(u_{n+1}(T))=
{\bf\Phi}^n_{int(\Gamma),U}((u_{1}(T'),u_{n}(T)))=
$$$$(\Phi(u_{1}(T'))\circ \phi,\iota\circ {\bf\Phi}^{n-1}_{(int(\Gamma);u_{1}(T')),U}(u_{n}(T)))=(\Phi(u_{1}(T'))\circ \phi,\iota\circ {\bf\Phi}^{n-1}_{int(T'),U}(u_{n}(T)))$$
where 
$$\iota=\iota_{\bf\Phi}^{int(\Gamma),u_{1}(T')}:(\Phi(int(\Gamma));\Phi(u_{1}(T'))\circ \phi)\sr \Phi((int(\Gamma);u_{1}(T')))=\Phi(int(T'))$$
is defined by the obvious analogs of (\ref{2015.04.08.eq1}) and (\ref{2015.04.08.eq2}). 

By Lemma \ref{2016.12.24.l1}(2) we have
$$(\Phi(u_{1}(T'))\circ \phi,\iota\circ ({\bf\Phi}^{n-1}_{int(T'),U}(u_{n}(T))))\circ \phi=(\Phi(u_{1}(T'))\circ \phi,(\iota\circ {\bf\Phi}^{n-1}_{int(T'),U}(u_{n}(T)))\circ \phi)$$
Next, by Lemma \ref{2016.12.24.l1}(1) we have
$$\psi(\Gamma)\circ (\Phi(u_{1}(T'))\circ \phi,(\iota\circ {\bf\Phi}^{n-1}_{int(T'),U}(u_{n}(T)))\circ \phi)=
$$$$(\psi(\Gamma)\circ \Phi(u_1(T'))\circ \phi, Q(\psi(\Gamma),\Phi(u_1(T'))\circ\phi)\circ 
((\iota\circ {\bf\Phi}^{n-1}_{int(T'),U}(u_{n}(T)))\circ \phi))$$
It remains to compare the last expression with (\ref{2016.12.24.eq4}). Both expressions are pairs. The first components of these pairs are equal by Lemma \ref{2015.03.21.l4}(1). To show that the second components are equal we need, in view of (\ref{2016.12.24.eq5}), to show that 
$$Q(\psi(\Gamma),\Phi(u_1(T'))\circ\phi)\circ 
((\iota\circ {\bf\Phi}^{n-1}_{int(T'),U}(u_{n}(T)))\circ \phi)=\psi(T')\circ ({\bf\Phi}^{n-1}_{int(T'),U}(u_{n}(T))\circ \phi)$$
In view of the ``associativities'' of Lemma \ref{2017.01.07.l1} it is sufficient to show that
\begin{eq}
\llabel{2016.12.24.eq7}
Q(\psi(\Gamma),\Phi(u_1(T'))\circ\phi)\circ\iota=\psi(T')
\end{eq}
where
$$\iota=\iota_{\bf\Phi}^{int(\Gamma),u_1(T')}:(\Phi(int(\Gamma));\Phi(u_1(T'))\circ\phi)\sr \Phi((int(\Gamma);u_1(T')))$$
Let $\Gamma=(m,A)$ and $F=u_{1,\Gamma}(T')$. Then $T'=(m+1,(A,F))$ and $\psi(T')=\psi((A,F))$ is the unique morphism that satisfies the equations (\ref{2016.12.10.eq2}) and (\ref{2016.12.10.eq3}). Therefore, to prove (\ref{2016.12.24.eq7}) we need to show that the following equalities hold:
\begin{eq}
\llabel{2016.12.24.eq8}
Q(\psi(A),\Phi(F)\circ\phi)\circ \iota_{\bf\Phi}^{int(A),F}\circ \Phi(Q(F))\circ \wt{\phi}=Q(\psi(A)\circ \Phi(F)\circ \phi)
\end{eq}
\begin{eq}
\llabel{2016.12.24.eq9}
Q(\psi(A),\Phi(F)\circ\phi)\circ \iota_{\bf\Phi}^{int(A),F}\circ \Phi(p_F)=p_{\psi(A)\circ \Phi(F)\circ \phi}\circ \psi(A)
\end{eq}
For (\ref{2016.12.24.eq8}) we have
$$Q(\psi(A),\Phi(F)\circ\phi)\circ \iota\circ \Phi(Q(F))\circ \wt{\phi}=
Q(\psi(A),\Phi(F)\circ\phi)\circ Q(\Phi(F)\circ \phi)=
Q(\psi(A)\circ \Phi(F)\circ\phi)$$
where the first equality is by (\ref{2015.04.08.eq2}) and the second one by (\ref{2016.08.24.eq4}).

For (\ref{2016.12.24.eq9}) we have
$$Q(\psi(A),\Phi(F)\circ\phi)\circ \iota\circ \Phi(p_F)=
Q(\psi(A),\Phi(F)\circ\phi)\circ p_{\Phi(F)\circ \phi}=p_{\psi(A)\circ \Phi(F)\circ\phi}\circ \psi(A)$$
where the first equality is by  (\ref{2015.04.08.eq1}) and the second one by (\ref{2016.12.02.eq4}) and (\ref{2016.11.10.eq1a}). 

A strictly parallel reasoning applies to the proof of (\ref{2016.12.20.eq6}). 

This completes the proof of Lemma \ref{2016.12.20.l4}.
\end{proof}

From (\ref{2016.12.20.eq5}), using the fact that $u_n$ is a bijection, for $d\in D_p^{n-1}(int(\Gamma),U)$, we have
\begin{eq}
\llabel{2017.01.13.eq6}
H(u_n^{-1}(d))=u_n^{-1}(\psi(\Gamma)\circ ({\bf\Phi}^{n-1}(d)\circ \phi))
\end{eq}
and from (\ref{2016.12.20.eq6}), using the fact that $\wt{u}_n$ is a bijection, for $d\in D_p^{n-1}(int(\Gamma),\wt{U})$, we have
\begin{eq}
\llabel{2017.01.13.eq7}
H(\wt{u}_n^{-1}(d))=\wt{u}_n^{-1}(\psi(\Gamma)\circ ({\bf\Phi}^{n-1}(d)\circ \wt{\phi}))
\end{eq}

\subsection{Universe category functors and the $I_p$ construcion}
\llabel{Sec.9}

Let $({\cal C},p)$ and $({\cal C}',p')$ be locally cartesian closed universe categories with binary product structure as considered in Section \ref{Sec.6}. Let ${\bf\Phi}:({\cal C},p)\sr ({\cal C}',p')$ be a universe category functor. No assumption is made about the compatibility of $\Phi$ with the locally cartesian closed or binary product structures.

In what follows we omit the indexes at $\eta_n$, $\eta^!_n$ and ${\bf\Phi}^n$ where no confusion is possible. 
\begin{problem}
\llabel{2015.03.21.prob1}
In the context introduced above to construct, for any $n\ge 0$ and $Y\in {\cal C}$, a morphism 
$$\chi_{{\bf\Phi},n}(Y):\Phi(I^n_p(Y))\sr I^n_{p'}(\Phi(Y))$$
such that for any $g:Y\sr Y'$ the square
\begin{eq}
\llabel{2016.12.30.eq1}
\begin{CD}
\Phi(I^n_p(Y)) @>\chi_{{\bf\Phi},n}(Y)>> I^n_{p'}(\Phi(Y))\\
@V\Phi(I^n_p(g))VV @VVI^n_{p'}(\Phi(g))V\\
\Phi(I^n_p(Y')) @>\chi_{{\bf\Phi},n}(Y')>> I^n_{p'}(\Phi(Y'))
\end{CD}
\end{eq}
commutes.
\end{problem}
\begin{construction}\rm
\llabel{2015.03.21.constr1}
We set
$$\chi_{{\bf\Phi},n}(Y)=\eta_n({\bf\Phi}^n(Id^n_Y))$$
where $Id^n_Y$ is defined in (\ref{2017.01.07.eq1}). In what follows we often omit the index $\bf\Phi$ at $\chi$. Let $g:Y\sr Y'$. Let us show that the square (\ref{2016.12.30.eq1}) commutes. We have
$$\chi_n(Y)\circ I_{p'}^n(\Phi(g))=
$$$$\eta_n({\bf\Phi}^n(Id^n_Y))\circ I_{p'}^n(\Phi(g))=
\eta_n({\bf\Phi}^n(Id^n_Y)\circ \Phi(g))=
\eta_n({\bf\Phi}^n(Id^n_Y\circ g))=
$$$$\eta_n({\bf\Phi}^n(\eta_n^!(I_p^n(g))))$$
where the first equality is by the definition of $\chi_n$, the second by (\ref{2017.01.03.eq3}), the third by (\ref{2017.01.05.eq2}), and the fourth by (\ref{2017.01.07.eq4}). 

On the other hand we have, 
$$\Phi(I_p^n(g))\circ \chi_n(Y')=
$$$$\Phi(I_p^n(g))\circ \eta_n({\bf\Phi}^n(Id^n_{Y'}))=
\eta_n(\Phi(I_p^n(g))\circ {\bf\Phi}^n(Id^n_{Y'}))=
\eta_n({\bf\Phi}^n(I_p^n(g)\circ Id^n_{Y'}))=
$$$$\eta_n({\bf\Phi}^n(\eta^!_n(I_p^n(g))))$$
where the first equality is by the definition of $\chi_n$, the second by (\ref{2017.01.03.eq2}), the third by (\ref{2017.01.05.eq1}), and the fourth by (\ref{2017.01.07.eq3}).

This shows that the square (\ref{2016.12.30.eq1}) commutes and completes the construction. 
\end{construction}
We have
\begin{eq}
\llabel{2017.01.13.eq3}
\chi_0(Y)=Id_{\Phi(Y)}
\end{eq}
Indeed,
$$\chi_0(Y)=\eta_0(\Phi^0(Id^n_Y))=\eta_0(\Phi^0(\eta_0^!(Id_{I_p^0(Y)})))=\Phi^0(Id_Y)=\Phi(Id_Y)=Id_{\Phi(Y)}$$
where the first equality is by the definition of $\xi_n$ in Construction \ref{2015.03.21.constr1}, the second by the definition of $Id^n_Y$ in (\ref{2017.01.07.eq1}), the third by the fact that $\eta_0=Id$ by the definition of $\eta_n$ in Construction \ref{2016.12.02.constr1}, the fourth by  (\ref{2017.01.13.eq1}) and the fifth by the identity axiom of the functor $\Phi$. 

We will also use the following formula.
\begin{lemma}
\llabel{2017.01.07.l3}
In the notation introduced above and $d\in D_p^n(X,Y)$ one has
$$\eta_n({\bf\Phi}^n(d))=\Phi(\eta_n(d))\circ\chi_n(Y)$$
\end{lemma}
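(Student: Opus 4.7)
The plan is to reduce the desired equality to the naturality formulas (\ref{2017.01.03.eq2}), (\ref{2017.01.05.eq1}) and the "identity trick" expressed by (\ref{2017.01.07.eq3}), using the very definition $\chi_n(Y)=\eta_n({\bf\Phi}^n(Id^n_Y))$ from Construction \ref{2015.03.21.constr1}. The key observation is that any $d\in D_p^n(X,Y)$ can be rewritten as a composition in which $Id^n_Y$ appears as the right-most factor, after which the two naturality laws move the outer morphism across ${\bf\Phi}^n$ and $\eta_n$ freely.

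Concretely, I would proceed as follows. First, set $m:=\eta_n(d)\colon X\sr I_p^n(Y)$ and apply (\ref{2017.01.07.eq3}) together with the fact that $\eta_n^!$ is inverse to $\eta_n$ to obtain
$$\eta_n(d)\circ Id^n_Y\;=\;\eta_n^!(\eta_n(d))\;=\;d.$$
Second, apply ${\bf\Phi}^n$ to this equality. By the naturality formula (\ref{2017.01.05.eq1}) in the first ("left") variable we have ${\bf\Phi}^n(f\circ d')=\Phi(f)\circ{\bf\Phi}^n(d')$, and hence
$${\bf\Phi}^n(d)\;=\;{\bf\Phi}^n(\eta_n(d)\circ Id^n_Y)\;=\;\Phi(\eta_n(d))\circ{\bf\Phi}^n(Id^n_Y).$$
Third, apply $\eta_n$ to both sides. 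By (\ref{2017.01.03.eq2}), $\eta_n$ commutes with pre-composition on the left, so
$$\eta_n({\bf\Phi}^n(d))\;=\;\eta_n\bigl(\Phi(\eta_n(d))\circ{\bf\Phi}^n(Id^n_Y)\bigr)\;=\;\Phi(\eta_n(d))\circ\eta_n({\bf\Phi}^n(Id^n_Y)).$$
Finally, the last factor is $\chi_n(Y)$ by the very definition in Construction \ref{2015.03.21.constr1}, which yields the claimed formula.

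There is no real obstacle: the argument is three lines of rewriting once one realizes that $d$ factors through $Id^n_Y$ via $\eta_n(d)$. The only point that deserves a remark is that the naturalities being used have already been packaged in the $\circ$-notation in Sections \ref{Sec.6} and \ref{Sec.7}; writing the proof in that notation keeps the chain of equalities short and avoids having to unfold $\eta_n$ or ${\bf\Phi}^n$ explicitly. In particular, nothing about the universe category functor ${\bf\Phi}$ being compatible with the locally cartesian closed or binary product structures is needed, which is why the construction of $\chi_n$ could dispense with such hypotheses in the first place.
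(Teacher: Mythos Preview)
Your proof is correct and is essentially the same as the paper's. Both arguments use the definition of $\chi_n(Y)$, the identity $\eta_n(d)\circ Id^n_Y=d$ (which is (\ref{2017.01.07.eq5}), itself a one-line consequence of (\ref{2017.01.07.eq3})), and the two naturalities (\ref{2017.01.05.eq1}) and (\ref{2017.01.03.eq2}); the only cosmetic difference is that the paper runs the chain of equalities starting from $\Phi(\eta_n(d))\circ\chi_n(Y)$ and ending at $\eta_n({\bf\Phi}^n(d))$, whereas you start from $d=\eta_n(d)\circ Id^n_Y$ and build up.
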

\begin{proof}
We have
$$\Phi(\eta_n(d))\circ\chi_n(Y)=
$$$$\Phi(\eta_n(d))\circ\eta_n({\bf\Phi}^n(Id^n_Y))=
\eta_n(\Phi(\eta_n(d))\circ {\bf\Phi}^n(Id^n_Y))=
\eta_n({\bf\Phi}^n(\eta_n(d)\circ Id^n_Y))=
$$$$\eta_n({\bf\Phi}^n(d))$$
where the first equality is by the definition of $\chi_n$, the second by (\ref{2017.01.03.eq2}), the third by (\ref{2017.01.05.eq1}), and the fourth by (\ref{2017.01.07.eq5}).
\end{proof}

\subsection{Universe category functors and the isomorphisms $\mu_n$ and $\wt{\mu}_n$}
\llabel{Sec.10}
For a universe category functor $(\Phi,\phi,\wt{\phi})$ and $n\ge 0$ let us denote by 
$$\xi_{{\bf\Phi},n}:\Phi(I_p^n(U))\sr I_{p'}^n(U')$$
the composition $\chi_{{\bf\Phi},n}(U)\circ I_{p'}^n(\phi)$ and by
$$\wt{\xi}_{{\bf\Phi},n}:\Phi(I_p^n(\wt{U}))\sr I_{p'}^n(\wt{U}')$$
the composition $\chi_{{\bf\Phi},n}(\wt{U})\circ I_{p'}^n(\wt{\phi})$. 

From (\ref{2017.01.13.eq3}) we have
\begin{eq}
\llabel{2017.01.13.eq5}
\xi_0=\phi\spc\spc
\wt{\xi}_0=\wt{\phi}
\end{eq}

In view of the commutativity of the squares (\ref{2016.12.30.eq1}) and (\ref{2015.03.21.sq1}) and the composition axiom for the functor $I_{p'}^n$ the squares
\begin{eq}
\llabel{2017.01.01.eq6}
\begin{CD}
\Phi(I_p^n(\wt{U})) @>\wt{\xi}_{{\bf\Phi},n}>> I_{p'}^n(\wt{U}')\\
@V\Phi(I_p^n(\partial))VV @VVI_{p'}^n(\partial)V\\
\Phi(I_p^n(U)) @>\xi_{{\bf\Phi},n}>> I_{p'}^n(U')
\end{CD}
\end{eq}
commute.
\begin{lemma}
\llabel{2015.05.06.l2}
Let $(\Phi,\phi,\wt{\phi})$ be a universe category functor, $\Gamma\in Ob(CC({\cal C},p))$ and $n\ge 1$. Then one has:
\begin{enumerate}
\item for $T\in Ob_n(\Gamma)$
\begin{eq}
\llabel{2017.01.01.eq7}
\mu_{n,H(\Gamma)}(H(T))=\psi(\Gamma)\circ \Phi(\mu_{n,\Gamma}(T))\circ \xi_{{\bf\Phi},n-1}
\end{eq}
\item for $o\in \wOb_n(\Gamma)$
\begin{eq}
\llabel{2017.01.01.eq8}
\wt{\mu}_{n,H(\Gamma)}(H(o))=\psi(\Gamma)\circ \Phi(\wt{\mu}_{n,\Gamma}(o))\circ \wt{\xi}_{{\bf\Phi},n-1}
\end{eq}
\end{enumerate}
\end{lemma}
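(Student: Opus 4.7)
The plan is to derive both equalities by a short chain of rewrites that peels off $\psi(\Gamma)$, then $I_{p'}^{n-1}(\phi)$ (resp.\ $I_{p'}^{n-1}(\wt{\phi})$), and finally separates the $\Phi$-image from the $\chi$-correction. The two halves of the lemma are strictly parallel, so I would write the argument for $T \in \Ob_n(\Gamma)$ and remark that the case of $o \in \wOb_n(\Gamma)$ is obtained by replacing $u_n, \mu_n, U, \phi, \xi$ with $\wt{u}_n, \wt{\mu}_n, \wt{U}, \wt{\phi}, \wt{\xi}$.

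For part (1), I would start from the definition (\ref{2017.01.03.eq4}) of $\mu_n$ and compute
\begin{align*}
\mu_{n,H(\Gamma)}(H(T))
 &= \eta_{n-1,U,int(H(\Gamma))}\!\bigl(u_{n,H(\Gamma)}(H(T))\bigr) \\
 &= \eta_{n-1}\!\bigl(\psi(\Gamma)\circ({\bf\Phi}^{n-1}(u_{n,\Gamma}(T))\circ \phi)\bigr) \\
 &= \psi(\Gamma)\circ\eta_{n-1}\!\bigl({\bf\Phi}^{n-1}(u_{n,\Gamma}(T))\bigr)\circ I_{p'}^{n-1}(\phi) \\
 &= \psi(\Gamma)\circ\Phi(\eta_{n-1}(u_{n,\Gamma}(T)))\circ \chi_{{\bf\Phi},n-1}(U)\circ I_{p'}^{n-1}(\phi) \\
 &= \psi(\Gamma)\circ\Phi(\mu_{n,\Gamma}(T))\circ \xi_{{\bf\Phi},n-1},
\end{align*}
where the second line is Lemma \ref{2016.12.20.l4}(1), the third line applies the two naturality identities (\ref{2017.01.03.eq2}) and (\ref{2017.01.03.eq3}) for $\eta_{n-1}$, the fourth line is Lemma \ref{2017.01.07.l3} applied to $d = u_{n,\Gamma}(T) \in D_p^{n-1}(int(\Gamma),U)$, and the last line uses (\ref{2017.01.03.eq4}) together with the definition $\xi_{{\bf\Phi},n-1} = \chi_{{\bf\Phi},n-1}(U)\circ I_{p'}^{n-1}(\phi)$.

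For part (2), the same computation goes through verbatim with the substitutions listed above: the analogue of Lemma \ref{2016.12.20.l4}(1) is Lemma \ref{2016.12.20.l4}(2), the definition of $\wt{\mu}_n$ is (\ref{2017.01.03.eq5}), Lemma \ref{2017.01.07.l3} is applied with $d = \wt{u}_{n,\Gamma}(o) \in D_p^{n-1}(int(\Gamma),\wt{U})$, and $\wt{\xi}_{{\bf\Phi},n-1} = \chi_{{\bf\Phi},n-1}(\wt{U})\circ I_{p'}^{n-1}(\wt{\phi})$.

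There is no real obstacle here beyond bookkeeping: all the heavy lifting has already been done in Lemma \ref{2016.12.20.l4} (which handled the passage from $u_n$ to ${\bf\Phi}^{n-1}$ and $\psi$) and Lemma \ref{2017.01.07.l3} (which traded ${\bf\Phi}^{n-1}$ under $\eta_{n-1}$ for $\Phi$ outside, at the cost of a $\chi$-factor). The only points that need care are verifying that $I_{p'}^{n-1}(\phi)$ emerges from (\ref{2017.01.03.eq3}) applied to the morphism $\phi\colon \Phi(U)\to U'$ rather than to $U$ itself, and that the bracketing convention used on the right-hand side of Lemma \ref{2016.12.20.l4} matches the associativity supplied by Lemma \ref{2017.01.07.l1}(5), so that successive outer- and inner-compositions can be split without ambiguity.
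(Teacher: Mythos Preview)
Your proof is correct and follows essentially the same path as the paper's own argument: unpack $\mu_n$ via (\ref{2017.01.03.eq4}), invoke Lemma~\ref{2016.12.20.l4}, pull out $\psi(\Gamma)$ and $I_{p'}^{n-1}(\phi)$ using (\ref{2017.01.03.eq2}) and (\ref{2017.01.03.eq3}), and then apply Lemma~\ref{2017.01.07.l3} to obtain the $\chi$-factor. One small slip: in your first displayed line the subscript on $\eta_{n-1}$ should be $U'$ rather than $U$, since $\mu_{n,H(\Gamma)}$ lives on the $({\cal C}',p')$ side; this is harmless given that you drop the subscripts immediately afterward.
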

\begin{proof}
Let us show first that the right hand sides of (\ref{2017.01.01.eq7}) and (\ref{2017.01.01.eq8}) are defined and belong to the same sets as the left hand sides.

Indeed, by (\ref{2017.01.03.eq4}), $\mu_{n,\Gamma}(T)$ is an element of $Mor_{\cal C}(int(\Gamma),I_p^{n-1}(U))$ and therefore $\Phi(\mu_{n,\Gamma}(T))$ is an element of $Mor_{\cal C'}(\Phi(int(\Gamma)),\Phi(I_p^{n-1}(U)))$.

The morphism $\psi(\Gamma)$ is of the form $int(H(\Gamma))\sr \Phi(int(\Gamma))$ and the morphism $\xi_{{\bf\Phi},n-1}$ is of the form $\Phi(I_p^{n-1}(U))\sr I_{p'}^{n-1}(U')$. Therefore, the composition on the right hand side of (\ref{2017.01.01.eq7}) is defined and belongs to the same set $Mor_{\cal C'}(int(H(\Gamma)),I_{p'}^{n-1}(U'))$ as $\mu_{n,H(\Gamma)}(H(T))$. 

A parallel reasoning shows that the right hand side of (\ref{2017.01.01.eq8}) is defined and both sides are elements of the set $Mor_{\cal C'}(int(H(\Gamma)),I_{p'}^{n-1}(\wt{U}'))$.

Next, we have
$$\mu_{n,H(\Gamma)}(H(T))=
int^{\circ}(\eta_{n-1,U'})_{H(\Gamma)}(u_n(H(T)))=
\eta_{n-1,U',int(H(\Gamma))}(u_n(H(T)))=
$$$$\eta_{n-1,U',int(H(\Gamma))}(\psi(\Gamma)\circ ({\bf\Phi}^{n-1}(u_n(T))\circ\phi))=
\psi(\Gamma)\circ \eta_{n-1,U',\Phi(int(\Gamma))}({\bf\Phi}^{n-1}(u_n(T))\circ\phi)=
$$$$\psi(\Gamma)\circ \eta_{n-1,\Phi(U),\Phi(int(\Gamma))}({\bf\Phi}^{n-1}(u_n(T)))\circ I_{p'}^{n-1}(\phi)$$
where the first equality is by the definition of $\mu_n$ (cf. Construction \ref{2015.03.17.constr2}), the second by the definition of $int^{\circ}$, the third by (\ref{2016.12.20.eq5}), the fourth by (\ref{2017.01.03.eq2}) and the fifth by (\ref{2017.01.03.eq3}).

Next
$$\eta_{n-1}({\bf \Phi}^{n-1}(u_n(T)))\circ I_{p'}^{n-1}(\phi)=
$$$$\Phi(\eta_{n-1}(u_n(T)))\circ\chi_{n-1}(U)\circ I_{p'}^{n-1}(\phi)=
\Phi(\eta_{n-1}(u_n(T)))\circ \xi_{n-1}=
$$$$\Phi(\mu_n(T))\circ \xi_{n-1}$$
where the first equality holds by Lemma \ref{2017.01.07.l3}, the second one by the definition of $\xi_{n}$ and the third one by the definition of $\mu_n$. This reasoning proves (\ref{2017.01.01.eq7}).

The proof of (\ref{2017.01.01.eq8}) is strictly parallel to the proof of (\ref{2017.01.01.eq7}).

The lemma is proved. 
\end{proof}

From (\ref{2017.01.01.eq7}), using the fact that $\mu_n$ is a bijection, for $F\in Mor_{\cal C}(int(\Gamma),I_p^{n-1}(U))$, we have
\begin{eq}
\llabel{2017.01.13.eq4}
H(\mu_n^{-1}(F))=\mu_n^{-1}(\psi(\Gamma)\circ \Phi(F)\circ \xi_{n-1})
\end{eq}
and from (\ref{2017.01.01.eq8}), using the fact that $\wt{\mu}_n$ is a bijection, for $F\in Mor_{\cal C}(int(\Gamma), I_p^{n-1}(\wt{U}))$, we have
\begin{eq}
\llabel{2017.01.13.eq2}
H(\wt{\mu}_n^{-1}(F))=\wt{\mu}_n^{-1}(\psi(\Gamma)\circ \Phi(F)\circ \wt{\xi}_{n-1})
\end{eq}

\section{Appendices}
The facts discussed and proved in the following appendices are certainly well known. We had to repeat them here because we need to fix notations and because there is a number of facts whose proofs I could not find in the literature.
\subsection{Categories with binary products and binary cartesian closed categories} 
\label{App.1}
Let $\cal C$ be a category.
\begin{definition}
\llabel{2016.12.02.def1}
A binary product diagram is a pair of morphisms of the form $(pr_1:bp\sr X,pr_2:bp\sr Y)$ such that for all $A\in {\cal C}$ the function 
\begin{eq}
\llabel{2016.12.02.eq2a}
Mor_{\cal C}(A,bp)\sr Mor_{\cal C}(A,X)\times Mor_{\cal C}(A,Y)
\end{eq}
given by $a\mapsto (a\circ pr_1, a\circ pr_2)$ is a bijection.

The structure of binary products on $\cal C$ is a family,  parametrized by pairs of objects $(X,Y)\in {\cal C}\times {\cal C}$, of binary product diagrams $(pr_1(X,Y):bp(X,Y)\sr X,pr_2(X,Y):bp(X,Y)\sr Y)$. 
\end{definition}
Unless another notation is given, as for the binary products in the slice categories considered below, the object $bp(X,Y)$ is denoted by $X\times Y$ and the structural morphisms from $X\times Y$ to $X$ and $Y$ by $pr_1^{X,Y}$ and $pr_2^{X,Y}$ respectively. We will often abbreviate the notation $pr_i^{X,Y}$ to $pr_i$. 

The following lemma expresses the well know ``uniqueness'' property of the binary products. We need its explicit form because in the next lemma we will need to state and prove that the corresponding ``canonical'' isomorphisms are natural. 
\begin{lemma}
\llabel{2016.12.02.l1}
Let $(pr_{1,i}:bp_i\sr X,pr_{2,i}:bp_i\sr Y)$, where $i=1,2$, be two binary product diagrams.  Let $\iota_{1,2}:bp_1\sr bp_2$ be the morphism such that $\iota_{1,2}\circ pr_{1,2}=pr_{1,1}$ and $\iota_{1,2}\circ pr_{2,2}=pr_{2,1}$ and $\iota_{2,1}:bp_2\sr bp_1$ be the morphism given by the symmetric condition. Then $\iota_{1,2}$ and $\iota_{2,1}$ are mutually inverse isomorphisms.
\end{lemma}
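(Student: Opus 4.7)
The plan is to derive both inverse identities from the injectivity half of the bijection (\ref{2016.12.02.eq2a}) applied at $A = bp_1$ and $A = bp_2$.

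First I would check that $\iota_{1,2}$ and $\iota_{2,1}$ are actually well defined by the conditions stated. Applying the bijection (\ref{2016.12.02.eq2a}) for the second binary product diagram to the test object $A = bp_1$ and the pair $(pr_{1,1}, pr_{2,1}) \in Mor(bp_1,X) \times Mor(bp_1,Y)$, surjectivity yields a morphism $\iota_{1,2}:bp_1 \to bp_2$ with $\iota_{1,2} \circ pr_{1,2} = pr_{1,1}$ and $\iota_{1,2} \circ pr_{2,2} = pr_{2,1}$; injectivity guarantees it is unique. Symmetrically one obtains $\iota_{2,1}$.

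Next I would consider the composition $\iota_{1,2} \circ \iota_{2,1}: bp_1 \to bp_1$ and compute its image under the bijection at $A = bp_1$ for the first binary product diagram. Using associativity of composition and the defining equations of $\iota_{1,2}$ and $\iota_{2,1}$, one has
$$(\iota_{1,2}\circ \iota_{2,1})\circ pr_{1,1} = \iota_{1,2}\circ (\iota_{2,1}\circ pr_{1,1}) = \iota_{1,2}\circ pr_{1,2} = pr_{1,1},$$
and likewise $(\iota_{1,2}\circ \iota_{2,1})\circ pr_{2,1} = pr_{2,1}$. On the other hand, $Id_{bp_1}\circ pr_{1,1} = pr_{1,1}$ and $Id_{bp_1}\circ pr_{2,1} = pr_{2,1}$ by the identity axiom. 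So both $\iota_{1,2}\circ \iota_{2,1}$ and $Id_{bp_1}$ are mapped by (\ref{2016.12.02.eq2a}) to the same element $(pr_{1,1}, pr_{2,1})$. Injectivity of this function then forces $\iota_{1,2}\circ \iota_{2,1} = Id_{bp_1}$.

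By the strictly parallel argument applied with the roles of the two diagrams interchanged, $\iota_{2,1}\circ \iota_{1,2} = Id_{bp_2}$. This shows that $\iota_{1,2}$ and $\iota_{2,1}$ are mutually inverse isomorphisms. No real obstacle is expected; the only care needed is to invoke the universal property (bijectivity) exactly at the correct test object and with respect to the correct product diagram in each of the two verifications.
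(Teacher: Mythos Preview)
Your proposal is correct and follows essentially the same approach as the paper: both arguments verify $\iota_{1,2}\circ\iota_{2,1}=Id_{bp_1}$ (and symmetrically the other composition) by composing with the two projections and invoking the injectivity of (\ref{2016.12.02.eq2a}). The paper's proof is simply a terser version of what you wrote, calling the projection check ``simple rewriting'' where you have spelled it out.
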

\begin{proof}
To show that $\iota_{1,2}\circ \iota_{2,1}=Id_{bp_1}$ we need to compare two morphisms whose codomain is a binary product. To do it it is sufficient, because of the injectivity of (\ref{2016.12.02.eq2a}), to prove that their compositions with the two projections are equal. This follows by simple rewriting. The same applies to the second composition.
\end{proof}
\begin{lemma}
\llabel{2015.04.16.l1}
Let $\cal C$ be a category. Consider four binary product diagrams  
$(pr_{1,i}:bp_i\sr X,pr_{2,i}:bp_i\sr Y)$ and $(pr'_{1,i}:bp'_i\sr X',pr'_{2,i}:bp'_i\sr Y')$ where $i=1,2$. Let $\iota=\iota_{1,2}:pb_1\sr pb_2$ be as in Lemma \ref{2016.12.02.l1} and similarly $\iota':pb_1'\sr pb_2'$. Let $a:X'\sr X$ and $b:Y'\sr Y$.

Let $c_i(a,b):pb_i'\sr pb_i$ be the unique morphisms such that $c_i(a,b)\circ pr_{1,i}=pr'_{1,i}\circ a$ and $c_i(a,b)\circ pr_{2,i}=b\circ pr'_{2,i}$. Then the square
$$
\begin{CD}
pb_1' @>c_1(a,b)>> pb_1\\
@V\iota' VV @VV\iota V\\
pb_2' @>c_2(a,b)>> pb_2
\end{CD}
$$
commutes, i.e., $c_1(a,b)\circ \iota=\iota'\circ c_2(a,b)$.
\end{lemma}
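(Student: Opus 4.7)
The plan is to invoke the universal property of the binary product $(pr_{1,2}, pr_{2,2}) : bp_2 \sr X, bp_2 \sr Y$ --- specifically the injectivity of the bijection (\ref{2016.12.02.eq2a}) in Definition \ref{2016.12.02.def1} applied to $A = bp'_1$. Both $c_1(a,b)\circ \iota$ and $\iota'\circ c_2(a,b)$ are morphisms from $bp'_1$ to $bp_2$, so to prove they are equal it suffices to check that their post-compositions with $pr_{1,2}$ and with $pr_{2,2}$ agree.

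For the comparison against $pr_{1,2}$, I would compute the left-hand side as
$$c_1(a,b)\circ \iota \circ pr_{1,2} = c_1(a,b)\circ pr_{1,1} = pr'_{1,1}\circ a,$$
where the first equality is the defining property of $\iota = \iota_{1,2}$ from Lemma \ref{2016.12.02.l1} and the second is the defining property of $c_1(a,b)$. The right-hand side unfolds as
$$\iota'\circ c_2(a,b)\circ pr_{1,2} = \iota'\circ pr'_{1,2}\circ a = pr'_{1,1}\circ a,$$
using first the defining property of $c_2(a,b)$ and then the defining property of $\iota' = \iota'_{1,2}$. The two expressions coincide.

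For the comparison against $pr_{2,2}$ the calculation is strictly parallel: the left-hand side reduces through $c_1(a,b)\circ pr_{2,1}$ to the composition of $pr'_{2,1}$ with $b$, and the right-hand side reduces through $\iota'\circ pr'_{2,2}\circ b$ to the same composition (here I use the second defining equations of $\iota$, $\iota'$, $c_1(a,b)$, and $c_2(a,b)$). Once both projection-comparisons are verified, the injectivity of (\ref{2016.12.02.eq2a}) yields $c_1(a,b)\circ \iota = \iota'\circ c_2(a,b)$, as required.

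There is no substantive obstacle here: the proof is entirely a diagram chase driven by the uniqueness clause in the universal property of binary products. The only care needed is to track the diagrammatic order of composition consistently when expanding each side, so that the defining equations of $\iota, \iota', c_1(a,b), c_2(a,b)$ are applied in the direction the paper's convention demands.
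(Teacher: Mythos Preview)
Your proposal is correct and follows essentially the same argument as the paper: both reduce the equality to comparing post-compositions with $pr_{1,2}$ and $pr_{2,2}$ via the universal property of $bp_2$, compute each side of the $pr_{1,2}$-comparison exactly as you do, and declare the $pr_{2,2}$-case similar.
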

\begin{proof}
Since $(pr_{1,2},pr_{2,2})$ is a binary product digagram it is sufficient to prove that
$$c_1(a,b)\circ \iota\circ pr_{1,2}=\iota'\circ c_2(a,b)\circ pr_{1,2}$$
and
$$c_1(a,b)\circ \iota\circ pr_{2,2}=\iota'\circ c_2(a,b)\circ pr_{2,2}$$
For the first one we have:
$$c_1(a,b)\circ \iota\circ pr_{1,2}=c_1(a,b)\circ pr_{1,1}=pr'_{1,1}\circ a$$
and
$$\iota'\circ c_2(a,b)\circ pr_{1,2}=\iota'\circ pr'_{1,2}\circ a=pr'_{1,1}\circ a$$
The verification of the second equality is similar.
\end{proof}

Given a category with binary products and morphisms $a:X\sr X'$, $b:Y\sr Y'$ denote by $a\times b:X\times Y\sr X'\times Y'$ the unique morphism such that $(a\times b)\circ pr_1=pr_1\circ a$ and $(a\times b)\circ pr_2=pr_2\circ b$. 

One has
\begin{eq}
\llabel{2016.11.26.eq1}
Id_{X\times Y}=Id_X\times Id_Y
\end{eq}
and for $a,b$ as above and $a':X'\sr X''$, $b':X'\sr X"$ one has
\begin{eq}
\llabel{2016.11.26.eq2}
(a\times b)\circ (a'\times b')=(a\circ a')\times (b\circ b')
\end{eq}
One proves these two equalities by composing both sides with $pr_1$ and $pr_2$ and using the uniqueness part of the binary product axiom.

From (\ref{2016.11.26.eq2}) one derives
\begin{eq}
\llabel{2016.11.28.eq3}
(a\times Id_Y)\circ (a'\times Id_Y)=(a\circ a')\times Id_Y
\end{eq}
and
\begin{eq}
\llabel{2016.11.28.eq4}
(Id_X\times b)\circ (Id_X\times b')=Id_X\times (b\circ b')
\end{eq}

The definition of a binary cartesian closed structure given below differs slightly from the definition of the cartesian closed structure given in \cite[IV.6]{MacLane} in that, that we do not require the specification of a final object  but only of binary products. The rest of the definition is identical to the one in \cite{MacLane}, but written more explicitly in order to introduce the notations that are used in proofs in the main part of the paper. 

Since we never use the definition of \cite[IV.6]{MacLane} we will often write ``cartesian closed'' instead of ``binary cartesian closed'', though we are not assuming a final object.
\begin{definition}
\llabel{2016.11.28.def1}
The (binary) cartesian closed structure on a category $\cal C$ is a collection of data of the form:
\begin{enumerate}
\item the structure of a category with binary products on $\cal C$,
\item for all $X,Y\in {\cal C}$ an object $\uu{Hom}(X,Y)$,
\item for all $X$ and $b:Y\sr Y'$ a morphism
$$\uu{Hom}(X,b):\uu{Hom}(X,Y)\sr \uu{Hom}(X,Y')$$
such that for all $Y$ one has
$$\uu{Hom}(X,Id_Y)=Id_{\uu{Hom}(X,Y)}$$
and for all $b:Y\sr Y'$, $b':Y'\sr Y''$ one has
$$\uu{Hom}(X,b\circ b')=\uu{Hom}(X,b)\circ \uu{Hom}(X,b')$$
\item For all $X,Y$ a morphism 
$$ev^X_Y:\uu{Hom}(X,Y)\times X\sr Y$$
such that for all $W$ the function
$$adj^{W,X}_{Y}:Mor(W,\uu{Hom}(X,Y))\sr Mor(W\times X, Y)$$
given by 
\begin{eq}
\llabel{2016.11.28.eq2}
u\mapsto (u\times Id_{X})\circ ev^X_Y
\end{eq}
is a bijection and such that for all $b:Y\sr Y'$ the square 
\begin{eq}
\llabel{2016.11.28.eq1}
\begin{CD}
\uu{Hom}(X,Y)\times X @>ev^X_Y>> Y\\
@V\uu{Hom}(X,b)\times Id_XVV @VVbV\\
\uu{Hom}(X,Y')\times X @>ev^X_{Y'}>> Y'
\end{CD}
\end{eq}
commutes. 
\end{enumerate}
A cartesian closed category is a category together with a cartesian closed structure on it.
\end{definition}
By definition the objects $\uu{Hom}(X,Y)$ are functorial only in $Y$. Their functoriality in $X$ is a consequence of a lemma. For $X,X',Y$ and $a:X\sr X'$ let 
$$\uu{Hom}(a,Y):\uu{Hom}(X',Y)\sr \uu{Hom}(X,Y)$$
be the unique morphism such that 
\begin{eq}
\llabel{2016.11.28.eq5}
adj(\uu{Hom}(a,Y))=(Id_{\uu{Hom}(X',Y)}\times a)\circ ev^{X}_{Y}
\end{eq}
Then one has:
\begin{lemma}
\llabel{2015.04.10.l1}
The morphisms $\uu{Hom}(-,Y)$ satisfy the equalites
$$\uu{Hom}(a\circ a',Y)=\uu{Hom}(a',Y)\circ \uu{Hom}(a,Y)$$
$$\uu{Hom}(Id_X,Y)=Id_{\uu{Hom}(X,Y)}$$
making $\uu{Hom}(-,Y)$ into a contravariant functor from ${\cal C}$ to itself.

In addition, for all $b:Y\sr Y'$ the square
$$
\begin{CD}
\uu{Hom}(X',Y) @>\uu{Hom}(X,b)>> \uu{Hom}(X',Y')\\
@V\uu{Hom}(a,Y)) VV @VV\uu{Hom}(a,Y') V\\
\uu{Hom}(X,Y) @>\uu{Hom}(X',b)>> \uu{Hom}(X,Y')
\end{CD}
$$
commutes.
\end{lemma}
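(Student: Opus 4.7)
The common strategy is that by Definition~\ref{2016.11.28.def1}(4) the map $adj = adj^{W,X}_Y$ is a bijection, so to establish an equality of two morphisms into $\uu{Hom}(X,Y)$ it suffices to check the corresponding equality after applying $adj$. A helper formula that I will use throughout is
\[ adj(v \circ u) = (v \times Id_X) \circ adj(u) \]
for composable $v : W' \sr W$ and $u : W \sr \uu{Hom}(X,Y)$; it follows from the defining formula $adj(u) = (u \times Id_X) \circ ev^X_Y$ together with~(\ref{2016.11.28.eq3}).

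For the identity law, applying $adj$ to $Id_{\uu{Hom}(X,Y)}$ yields $(Id_{\uu{Hom}(X,Y)} \times Id_X) \circ ev^X_Y$ by~(\ref{2016.11.26.eq1}), which is precisely $adj(\uu{Hom}(Id_X,Y))$ by~(\ref{2016.11.28.eq5}). For the composition law with $a: X \sr X'$ and $a': X' \sr X''$, I apply $adj$ to the proposed equality. On one side~(\ref{2016.11.28.eq5}) gives the standard form for $adj(\uu{Hom}(a \circ a',Y))$. On the other side, the helper formula followed by two applications of~(\ref{2016.11.28.eq5}), together with an intermediate rerouting using the bifunctoriality identities~(\ref{2016.11.26.eq2}) and~(\ref{2016.11.28.eq4}), produces the same expression; bijectivity of $adj$ then yields the desired equality.

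For the naturality square, I apply $adj$ to both legs of the square. Each leg, after one use of the helper formula, an application of~(\ref{2016.11.28.eq5}), and the naturality square~(\ref{2016.11.28.eq1}) for $b$, reduces to an expression of the form $((\,\cdot\,) \times Id_X) \circ ev^X_Y \circ b$. The remaining identification of the two reduced expressions is exactly the defining equation~(\ref{2016.11.28.eq5}) for $\uu{Hom}(a,Y)$ post-composed with $b$, and injectivity of $adj$ concludes.

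The main technical obstacle is bookkeeping of domains and codomains: since $\uu{Hom}$ is contravariant in the first argument and the paper uses the diagrammatic composition convention, one must be careful about the order of factors, and the two possible factorizations of $f \times g$ via~(\ref{2016.11.26.eq2}) as $(f \times Id) \circ (Id \times g)$ or as $(Id \times g) \circ (f \times Id)$ have different intermediate objects and must be chosen to match the types of the evaluation morphisms used in each subsequent substitution. Once the correct factorization is selected at each step, the computations are purely mechanical.
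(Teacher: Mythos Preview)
Your proof is correct. The helper formula, the identity and composition checks, and the naturality argument all go through exactly as you describe; the bookkeeping you flag about choosing the right factorization of $f\times g$ via~(\ref{2016.11.26.eq2}) is indeed the only subtlety, and you handle it correctly.

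The paper, however, takes an entirely different route: it simply cites \cite[Theorem 3, p.~100]{MacLane}, the standard theorem that a parametrized family of adjunctions assembles into a bifunctor. So where you give a direct, self-contained computation using only the identities~(\ref{2016.11.26.eq1})--(\ref{2016.11.28.eq5}) established in the appendix, the paper appeals to the general machinery of adjoint functors. Your approach is more elementary and arguably more in keeping with the paper's declared ``formalization-ready'' style (and indeed your argument is essentially the content of Lemma~\ref{2015.05.12.l2}, proved just afterward, specialized and reassembled). The citation to MacLane buys brevity and situates the result in a familiar general framework, but at the cost of a dependency that a formalization would have to unfold anyway.
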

\begin{proof}
It is a particular case of \cite[Theorem 3, p.100]{MacLane}. The commutativity of the square is a part of the "bifunctor" claim of the theorem. 
\end{proof}
\begin{lemma}
\llabel{2015.04.20.l2}
In a cartesian closed category let $X,X',Y$ be objects and let $a:X\sr X'$ be a morphism. Then the square
$$
\begin{CD}
\uu{Hom}(X',Y)\times X  @>Id_{\uu{Hom}(X',Y)}\times a>>  \uu{Hom}(X',Y)\times X' \\
@V\uu{Hom}(a,Y)\times Id_{X}VV @VVev^{X'}_Y V\\
\uu{Hom}(X,Y)\times X  @>ev^X_Y>> Y
\end{CD}
$$
commutes.
\end{lemma}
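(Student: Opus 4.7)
The statement to prove is the commutativity
\[
(\uu{Hom}(a,Y)\times Id_X)\circ ev^X_Y \;=\; (Id_{\uu{Hom}(X',Y)}\times a)\circ ev^{X'}_Y.
\]
My plan is to show that both sides are in fact \emph{the same} element of $Mor(\uu{Hom}(X',Y)\times X,Y)$, namely $adj^{\uu{Hom}(X',Y),X}_Y(\uu{Hom}(a,Y))$. In other words, the commutativity is not an additional fact but is immediately encoded in the very definition of $\uu{Hom}(a,Y)$.

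First I would unwind the left-hand side. By formula (\ref{2016.11.28.eq2}) defining the bijection $adj^{W,X}_Y$, applied to $W=\uu{Hom}(X',Y)$ and to the morphism $u=\uu{Hom}(a,Y):\uu{Hom}(X',Y)\sr \uu{Hom}(X,Y)$, one has
\[
adj^{\uu{Hom}(X',Y),X}_Y(\uu{Hom}(a,Y)) \;=\; (\uu{Hom}(a,Y)\times Id_X)\circ ev^X_Y.
\]
This identifies the left-hand side of the target equation as $adj(\uu{Hom}(a,Y))$.

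Next I would invoke (\ref{2016.11.28.eq5}), which is precisely the defining equation of the morphism $\uu{Hom}(a,Y)$; it asserts
\[
adj(\uu{Hom}(a,Y)) \;=\; (Id_{\uu{Hom}(X',Y)}\times a)\circ ev^{X'}_Y
\]
(with the evaluation morphism taken at $X'$, so that the composition is typed correctly as a map $\uu{Hom}(X',Y)\times X\sr Y$). Combining the two displayed equalities gives exactly the commutativity claim.

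There is effectively no obstacle here: the lemma is a direct transcription of the definition of $\uu{Hom}(a,Y)$ via the bijection $adj$, once one notices that both sides of the square in question are the two expressions used to define, respectively, the image and the prescribed value of $\uu{Hom}(a,Y)$ under $adj$. The only small bookkeeping point worth highlighting in the write-up is the typing of the evaluation morphism on the right-hand side: since $Id_{\uu{Hom}(X',Y)}\times a$ lands in $\uu{Hom}(X',Y)\times X'$, the evaluation used is $ev^{X'}_Y$, so that both sides are genuinely morphisms $\uu{Hom}(X',Y)\times X\sr Y$ and the comparison makes sense.
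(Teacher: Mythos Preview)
Your proof is correct and is essentially identical to the paper's: both show that each path of the square equals $adj(\uu{Hom}(a,Y))$, one by the defining formula (\ref{2016.11.28.eq2}) for $adj$ and the other by the defining equation (\ref{2016.11.28.eq5}) for $\uu{Hom}(a,Y)$. Your remark about the typing of the evaluation morphism (that it must be $ev^{X'}_Y$ for the right-hand composite to make sense) is well taken.
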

\begin{proof}
Let us show that both paths in the square are adjoints to $\uu{Hom}(a,Y)$. For the path that goes through the upper right corner it follows from the definition of  $\uu{Hom}(a,Y)$ as the morphism whose adjoint is $(Id\times a)\circ ev$. For the path that goes through the lower left corner it follows from the definition of adjoint applied to $\uu{Hom}(a,Y)$. Indeed, the adjoint to this morphism is
$$adj(\uu{Hom}(a,Y))=(\uu{Hom}(a,Y)\times Id_{X})\circ ev^X_Y$$
\end{proof}
\begin{lemma}
\llabel{2015.05.12.l2}
Let $\cal C$ be a cartesian closed category. Let $X,Y,W\in {\cal C}$, then one has: 
\begin{enumerate}
\item Let $b:Y\sr Y'$ be a morphism. Then for any $r\in Mor(W,\uu{Hom}(X,Y'))$ one has
$$adj(r\circ \uu{Hom}(X,b))=adj(r)\circ b$$
\item Let $a:X\sr X'$ be a morphism. Then for any $r\in Mor(W,\uu{Hom}(X',Y))$  one has
$$adj(r\circ \uu{Hom}(a,Y))=(Id_{W}\times a)\circ adj(r)$$
\item Let $c:W\sr W'$ be a morphism. Then for any $r\in Mor(W',\uu{Hom}(X,Y))$ one has
$$adj(c\circ r)=(c\times Id_{X})\circ adj(r)$$
\end{enumerate}
\end{lemma}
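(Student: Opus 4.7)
The plan is to prove each of the three identities by unfolding the definition $adj(u) = (u \times Id_X) \circ ev^X_{-}$ from equation (\ref{2016.11.28.eq2}) on the left-hand side, and then rewriting using the bifunctoriality formulas (\ref{2016.11.26.eq1})--(\ref{2016.11.26.eq2}) of the product $\times$ together with either the naturality square (\ref{2016.11.28.eq1}) (in part (1)) or Lemma \ref{2015.04.20.l2} (in part (2)). Part (3) needs only bifunctoriality.

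For part (1), starting from $adj(r\circ \uu{Hom}(X,b))=((r\circ\uu{Hom}(X,b))\times Id_X)\circ ev^X_{Y'}$, I would split the product factor using (\ref{2016.11.26.eq2}) as $(r\times Id_X)\circ (\uu{Hom}(X,b)\times Id_X)$, and then use the commutativity of the square (\ref{2016.11.28.eq1}) for the morphism $b$ to replace the tail $(\uu{Hom}(X,b)\times Id_X)\circ ev^X_{Y'}$ by $ev^X_Y\circ b$, yielding $(r\times Id_X)\circ ev^X_Y\circ b = adj(r)\circ b$.

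For part (2), I would proceed analogously: $adj(r\circ\uu{Hom}(a,Y)) = (r\times Id_X)\circ(\uu{Hom}(a,Y)\times Id_X)\circ ev^X_Y$, and Lemma \ref{2015.04.20.l2} rewrites the last two factors as $(Id_{\uu{Hom}(X',Y)}\times a)\circ ev^{X'}_Y$. Applying (\ref{2016.11.26.eq2}) twice, in two different directions, gives
\[
(r\times Id_X)\circ(Id_{\uu{Hom}(X',Y)}\times a) \;=\; r\times a \;=\; (Id_W\times a)\circ(r\times Id_{X'}),
\]
so the whole expression becomes $(Id_W\times a)\circ(r\times Id_{X'})\circ ev^{X'}_Y = (Id_W\times a)\circ adj(r)$. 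Part (3) is the shortest: $adj(c\circ r) = ((c\circ r)\times Id_X)\circ ev^X_Y$, and (\ref{2016.11.26.eq2}) immediately gives $((c\circ r)\times (Id_X\circ Id_X)) = (c\times Id_X)\circ(r\times Id_X)$, producing $(c\times Id_X)\circ adj(r)$.

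I do not expect a genuine obstacle here; the proofs are short rewriting chains once $adj$ is unfolded. The only care required is bookkeeping: in each application of (\ref{2016.11.26.eq2}) one must insert an identity as $Id_X = Id_X\circ Id_X$ (or $Id_X\circ a = a\circ Id_{X'}$ style rearrangements in part (2)) so as to split or merge the two tensor factors correctly, and one must keep straight which evaluation morphism ($ev^X$ versus $ev^{X'}$, $ev^X_Y$ versus $ev^X_{Y'}$) appears at each stage — a notational rather than a mathematical challenge.
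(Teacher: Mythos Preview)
Your proposal is correct and matches the paper's proof essentially step for step: unfold $adj$ via (\ref{2016.11.28.eq2}), split the product using bifunctoriality, and then apply (\ref{2016.11.28.eq1}) in part (1), the defining property of $\uu{Hom}(a,Y)$ in part (2), and nothing further in part (3). The only cosmetic difference is that in part (2) you invoke Lemma \ref{2015.04.20.l2} directly, whereas the paper carries out the two-step equivalent of that lemma inline, namely $(\uu{Hom}(a,Y)\times Id_X)\circ ev^X_Y = adj(\uu{Hom}(a,Y)) = (Id\times a)\circ ev^{X'}_Y$ via (\ref{2016.11.28.eq2}) and (\ref{2016.11.28.eq5}); the content is identical.
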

\begin{proof}
The proof of the first case is given by 
$$adj(r\circ \uu{Hom}(X,b))=
((r\circ \uu{Hom}(X,b))\times Id_X)\circ ev^{X}_{Y}=
$$$$(r\times Id_X)\circ (\uu{Hom}(X,b))\times Id_X)\circ ev^{X}_{Y}=
$$$$(r\times Id_X)\circ ev^{X}_{Y'}\circ b=
adj(r)\circ b$$
where the first equality is by (\ref{2016.11.28.eq2}), second equality by Lemma \ref{2015.05.14.l1}, the third equality by the commutativity of (\ref{2016.11.28.eq1}) and the fourth equality again by  (\ref{2016.11.28.eq2}).

The proof of the second case is given by the following sequence of equalities where we use the notation $Hm$ for $\uu{Hom}(a,Y)$ as well as a number of other abbreviations:
$$adj(r\circ Hm)=
((r\circ Hm)\times Id)\circ ev =
(r\times Id)\circ (Hm\times Id)\circ ev=
(r\times Id)\circ adj(Hm)=
$$$$(r\times Id)\circ (Id\times a)\circ ev=
(r\times a)\circ ev=
(Id\times a)\circ (r\times Id)\circ ev=
(Id\times a)\circ adj(r)$$
where the first equality is by (\ref{2016.11.28.eq2}), the second by (\ref{2016.11.28.eq3}), the third by (\ref{2016.11.28.eq2}), the fourth by (\ref{2016.11.28.eq5}), the fifth by (\ref{2016.11.26.eq2}), the sixth by (\ref{2016.11.26.eq2}) and the seventh by (\ref{2016.11.28.eq2}).

The proof of the third case is given by
$$adj(c\circ r)=
((c\circ r)\times Id_X) \circ ev^{X}_{Y}=
(c\times Id_X)\circ (r\times Id_X)\circ ev^{X}_{Y}=
$$$$(c\times Id_X)\circ adj(r)$$
where the first equality is by (\ref{2016.11.28.eq2}), second equality by (\ref{2016.11.28.eq3}) and the third equality by (\ref{2016.11.28.eq2}). 

Lemma is proved. 
\end{proof}

\subsection{Slice categories, pullbacks and locally cartesian closed categories}
\label{App.2}

For a category $\cal C$ and $Z\in {\cal C}$ one denotes by ${\cal C}/Z$ the slice category of $\cal C$ over $Z$. When one works in set theory one has to choose one of the several possible definitions of ${\cal C}/Z$. Indeed, the set of objects of ${\cal C}/Z$ can be defined as the set of pairs $(X,f)$ where $X\in {\cal C}$ and $f:X\sr Z$ or as the set of morphisms $f\in Mor(C)$ such that $codom(f)=Z$. There is an obvious bijection between these two sets but they are not equal. We define $Ob({\cal C}/Z)$ as the set of pairs $(X,f)$. Even more choices exist in the definition of the set of morphisms of ${\cal C}/Z$. One definition is the set of triples $(((X,f),(Y,g)),a)$ where $(X,f),(Y,g)\in Ob({\cal C}/Z)$ and $a:X\sr Y$ is such that $f=a\circ g$. Another one is the set of pairs $(a,g)$ where $a,g\in Mor(C)$ are such that $codom(a)=dom(g)$ and $codom(f)=Z$. Again, these sets are obviously isomorphic but not equal. Various other choices are possible. We will use the second option. We denote the pair $(a,g)$ by $a^g$. 

The mappings $(X,f)\mapsto X$ and $a^g\mapsto a$ define a functor ${\cal C}/Z\sr {\cal C}$ that we denote by $\pi_{Z,\#}$. We will rarely write the functions $(\pi_{Z,\#})_{Ob}$ and $(\pi_{Z,\#})_{Mor}$ explicitly using them instead as ``coercions''. Formally speaking, we will assume that $(\pi_{Z,\#})_{Ob}$ (resp. $(\pi_{Z,\#})_{Mor}$) is inserted in our notation whenever an object (resp. a morphism) of ${\cal C}/Z$ is specified where an object (resp. a morphism) of ${\cal C}$ is required.

We will say that $a:X\sr Y$ is a morphism over $Z$ if $a\circ g=f$. For given $(X,f)$ and $(Y,g)$, the function 
\begin{eq}
\llabel{2016.11.26.eq3}
a^g\mapsto a
\end{eq}
defines a bijection between morphisms $(X,f)\sr (Y,g)$ in ${\cal C}/Z$ and morphisms $X\sr Y$ over $Z$ in $\cal C$. 

In a category with binary products the morphism $Id_{Z}\times b$ satisfies the equality
$$(Id_{Z}\times b)\circ pr_1=pr_1$$
and therefore defines a morphism from $(Z\times Y,pr_1)$ to $(Z\times Y',pr_1)$ in ${\cal C}/Z$. We will denote this morphism in the slice category by $Z\times b$. Since (\ref{2016.11.26.eq3}) is injective, the equalities (\ref{2016.11.26.eq1}) and (\ref{2016.11.26.eq2}) imply that
\begin{eq}
\llabel{2016.11.30.eq1}
Z\times Id_{Y}=Id_{(Z\times Y,pr_1)}
\end{eq}
and
\begin{eq}
\llabel{2016.11.30.eq2}
Z\times (b\circ b')=(Z\times b)\circ (Z\times b')
\end{eq}
that is, that the mappings $X\mapsto (Z\times Y,pr_1)$, $b\mapsto Z\times b$ define a functor $Z\times -$ from ${\cal C}$ to ${\cal C}/Z$.  

The same holds for morphisms of the form $a:X\sr X'$. We denote the morphism in ${\cal C}/Z$ corresponding to the morphism $a\times Id_Z$ by $a\times Z$ and the resulting functor ${\cal C}\sr {\cal C}/Z$ by $-\times Z$. 
\begin{lemma}
\llabel{2016.12.16.l1}
Let 
\begin{eq}
\llabel{2016.12.16.eq1}
\begin{CD}
X @>a>> Y\\
@Va' VV @VVg V\\
Y' @>g'>> Z
\end{CD}
\end{eq}
be a commutative square of morphisms in $\cal C$ and $f=a\circ g=a'\circ g'$. Then 
\begin{eq}
\llabel{2016.12.16.eq2b}
\begin{CD}
(X,f) @>a^g>> (Y,g)\\
@V(a')^{g'}VV\\
(Y,g') @.
\end{CD}
\end{eq}
is a binary product diagram in ${\cal C}/Z$ if and only if (\ref{2016.12.16.eq1}) is a pullback in $\cal C$.
\end{lemma}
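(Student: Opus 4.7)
My plan is to show both directions by unfolding the two universal properties in parallel and matching the data. The key observation is that a morphism $(W,h)\to (X,f)$ in ${\cal C}/Z$ is the same as a morphism $u:W\to X$ in $\cal C$ with $u\circ f = h$, and for any $u:W\to X$ the composite $u\circ a:W\to Y$ automatically satisfies $(u\circ a)\circ g = u\circ f$, and similarly for $a'$. So both universal properties reduce to statements about morphisms in $\cal C$ that only differ in whether the map $Z$-component $h$ is chosen in advance (slice product) or is derived from the compatibility condition $v\circ g=v'\circ g'$ (pullback).

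Concretely, for the ``if'' direction I would assume (\ref{2016.12.16.eq1}) is a pullback and fix $(W,h)\in {\cal C}/Z$. The map
\[
Mor_{{\cal C}/Z}((W,h),(X,f))\to Mor_{{\cal C}/Z}((W,h),(Y,g))\times Mor_{{\cal C}/Z}((W,h),(Y',g'))
\]
sends $u^f$ to $(u\circ a)^g$ and $(u\circ a')^{g'}$. Given a pair $(v^g,(v')^{g'})$ on the right, the conditions $v\circ g=h=v'\circ g'$ yield $v\circ g=v'\circ g'$, so the pullback property provides a unique $u:W\to X$ with $u\circ a=v$, $u\circ a'=v'$; then $u\circ f = u\circ a\circ g = v\circ g = h$, so $u^f$ lies in $Mor_{{\cal C}/Z}((W,h),(X,f))$ and is the unique preimage. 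This verifies the bijection in Definition \ref{2016.12.02.def1}.

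For the ``only if'' direction I would assume (\ref{2016.12.16.eq2b}) is a binary product diagram in ${\cal C}/Z$. Given $v:W\to Y$ and $v':W\to Y'$ in $\cal C$ with $v\circ g = v'\circ g'$, set $h:=v\circ g$; then $v^g\in Mor_{{\cal C}/Z}((W,h),(Y,g))$ and $(v')^{g'}\in Mor_{{\cal C}/Z}((W,h),(Y',g'))$, so by the product property there is a unique $u^f\in Mor_{{\cal C}/Z}((W,h),(X,f))$ with $u\circ a=v$ and $u\circ a'=v'$. For uniqueness in $\cal C$, any other $u':W\to X$ with $u'\circ a=v$ satisfies $u'\circ f = u'\circ a\circ g = v\circ g = h$, so $(u')^f$ is a morphism in ${\cal C}/Z$ with the same projections as $u^f$ and must therefore equal it.

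There is no serious obstacle here; the only care needed is the bookkeeping of identifying a morphism $a^g$ in ${\cal C}/Z$ with the underlying morphism $a$ of $\cal C$ and tracking when the equation $a\circ g = f$ is automatic versus when it must be checked. The argument is essentially a restatement of the well-known fact that products in ${\cal C}/Z$ are pullbacks over $Z$, written at the level of detail required by the formalization-ready style.
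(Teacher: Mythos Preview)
Your proof is correct and follows essentially the same approach as the paper's: both arguments unfold the two universal properties and match them by passing between a morphism $u^f$ in ${\cal C}/Z$ and its underlying $u$ in $\cal C$, checking that the ``over $Z$'' condition is automatic in each direction. The only cosmetic difference is that the paper writes out the ``only if'' direction in detail and dismisses the ``if'' direction as similar reasoning, whereas you spell out both.
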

\begin{proof}
Assume that (\ref{2016.12.16.eq2b}) is a binary product diagram. Let $W\in {\cal C}$ and let $d:W\sr Y$, $d':W\sr Y'$ be such that $d\circ g=d'\circ g'$. Let $e=d\circ g$. Then $d^g:(W,e)\sr (Y,g)$ and $(d')^{g'}:(W,e)\sr (Y',g')$ are morphisms in ${\cal C}/Z$ and therefore there exists $c^f:(W,e)\sr (X,f)$ such that $c^f\circ a^g=d^g$ and $c^f\circ (a')^{g'}=(d')^{g'}$ in ${\cal C}/G$, that is, $c\circ a=d$ and $c\circ a'=d'$ in $\cal C$. Let $c':W\sr X$ be another morphism in ${\cal C}$ such that $c'\circ a=d$ and $c'\circ a'=d'$. Then $e=d\circ g=c'\circ a\circ g=c'\circ f$ and therefore $(c')^f$ is a morphism $(W,e)\sr (X,f)$ in ${\cal C}/Z$. Next, $(c')^f\circ a^g=(c'\circ a)^g=d^g$ and $(c')^f\circ (a')^{g'}=(c'\circ a')^{g'}=(d')^{g'}$. Therefore $(c')^f=c^f$, that is, $c=c'$. This shows that (\ref{2016.12.16.eq1}) is a pullback in $\cal C$.

Similar reasoning shows that if (\ref{2016.12.16.eq1}) is a pullback in $\cal C$ then (\ref{2016.12.16.eq2b}) is a binary product diagram in ${\cal C}/Z$.
\end{proof}

Lemma \ref{2016.12.16.l1}, combined with a related statement about commutative squares, implies that a choice of binary product structures on all the slice categories ${\cal C}/Z$ is ``the same as'' the choice of pullbacks for all pairs of morphisms with the common codomain in $\cal C$. 

To be precise we have to say that how to construct a bijection between the set of families of binary product structures on the categories ${\cal C}/Z$ for all $Z$ and the set of pullback structures on ${\cal C}$. 

We usually denote the distinguished binary product of $(X,f)$ and $(Y,g)$ in ${\cal C}/Z$ by $(X,f)\times_Z (Y,g)$ and the canonical morphism from $(X,f)\times_Z (Y,g)$ to $Z$ by $f\dd g$. 

For $f:X\sr Z$ and $g:Y\sr Z$, the two commutative triangles formed by $pr_1:(X,f)\times_Z(Y,g)\sr (X,f)$, $f$, $f\dd g$ and $pr_2:(X,f)\times_Z(Y,g)\sr (Y,g)$, $g$, $f\dd g$ are adjacent and define the familiar commutative square of the pullback of $f$ and $g$. 

This defines a function in one direction.

For $f:X\sr Z$ and $g:Y\sr Z$, the diagonal of the pullback square based on $f$ and $g$ is an object over $Z$ and the two projections define morphisms from this object to $(X,f)$ and $(Y,g)$ respectively. The corresponding pair of morphisms in ${\cal C}/Z$ is a binary product diagram. This defines a morphism in the other direction. 

The fact that these morphisms are inverse to each other follows readily from the construction. 


Given a binary products structure on ${\cal C}/Z$, morphisms $f:X\sr Z$, $g:Y\sr Z$ and morphisms $a:X'\sr X$, $b:Y'\sr Y$ we have a morphism $a^f\times_Z b^g$
which is the unique morphism in ${\cal C}/Z$ of the form
$$a^f\times_Z b^g:(X',a\circ f)\times_Z(Y',b\circ g)\sr (X,f)\times_Z(Y,g)$$
such that
\begin{eq}
\llabel{2016.11.24.eq1}
(a^f\times_Z b^g)\circ pr_1=pr_1\circ a^f
\end{eq}
and
\begin{eq}
\llabel{2016.11.24.eq2}
(a^f\times_Z b^g)\circ pr_2=pr_2\circ b^g
\end{eq}
\begin{lemma}
\llabel{2015.05.14.l1}
In the setting introduced above one has:
\begin{enumerate}
\item $Id_{(X,f)\times_Z(Y,g)}=Id_{(X,f)}\times_Z Id_{(Y,g)}$,
\item suppose that we have in addition morphisms $a':X''\sr X'$ and $b':Y''\sr Y'$. Then
$$((a')^{a\circ f}\times_Z (b')^{b\circ g})\circ (a^f\times_Z b^g)=(a'\circ a)^f\times_Z(b'\circ b)^g$$
\end{enumerate}
\end{lemma}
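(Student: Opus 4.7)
The plan is to reduce both equalities to the uniqueness clause in the definition of a binary product diagram (Definition \ref{2016.12.02.def1}). By Lemma \ref{2016.12.16.l1} (applied to slice categories via a further slice, or equivalently by reading the binary product in $\mathcal{C}/Z$ directly through its characterization), a morphism into $(X,f)\times_Z(Y,g)$ is uniquely determined by its compositions with $pr_1$ and $pr_2$. So for each assertion it suffices to compute both sides' post-compositions with $pr_1$ and with $pr_2$ and check they agree.

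For part (1), I would compute $(Id_{(X,f)}\times_Z Id_{(Y,g)})\circ pr_1$ and $(Id_{(X,f)}\times_Z Id_{(Y,g)})\circ pr_2$ using (\ref{2016.11.24.eq1}) and (\ref{2016.11.24.eq2}) with $a=Id_{(X,f)}$ and $b=Id_{(Y,g)}$. The right-hand sides become $pr_1\circ Id_{(X,f)}=pr_1$ and $pr_2\circ Id_{(Y,g)}=pr_2$. On the other side, $Id_{(X,f)\times_Z(Y,g)}\circ pr_i=pr_i$. The two morphisms therefore share the same pair of projections, so they are equal by uniqueness.

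For part (2), I would again compute both compositions of the left-hand side with $pr_i$. Using (\ref{2016.11.24.eq1}) twice,
\begin{equation*}
((a')^{a\circ f}\times_Z (b')^{b\circ g})\circ (a^f\times_Z b^g)\circ pr_1
=((a')^{a\circ f}\times_Z (b')^{b\circ g})\circ pr_1\circ a^f
=pr_1\circ (a')^{a\circ f}\circ a^f
=pr_1\circ (a'\circ a)^f.
\end{equation*}
The analogous calculation with (\ref{2016.11.24.eq2}) gives $pr_2\circ (b'\circ b)^g$ for the $pr_2$ composition. But by the very definition of $(a'\circ a)^f\times_Z(b'\circ b)^g$, these are exactly the two projections of the right-hand side. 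Hence the two morphisms agree after composing with each projection, and equality follows from uniqueness.

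There is no real obstacle here; the only bookkeeping point to be careful about is that the various superscripts on the morphisms in $\mathcal{C}/Z$ line up correctly (so that each composition like $(a')^{a\circ f}\circ a^f=(a'\circ a)^f$ is well-typed as a morphism in the slice category), and that the appeal to uniqueness in the binary product diagram is made in $\mathcal{C}/Z$ rather than in $\mathcal{C}$. Both are immediate from the definitions recalled at the start of Appendix \ref{App.2}.
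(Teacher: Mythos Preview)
Your argument is correct and is essentially the same as the paper's: the paper simply observes that the lemma is a particular case of (\ref{2016.11.26.eq1}) and (\ref{2016.11.26.eq2}) applied in the slice category ${\cal C}/Z$, and those equalities were themselves proved by exactly the projection-and-uniqueness computation you carry out. Your reference to Lemma \ref{2016.12.16.l1} is unnecessary, since the uniqueness clause of the binary product diagram in ${\cal C}/Z$ is available directly from Definition \ref{2016.12.02.def1}.
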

\begin{proof}
It is a particular case of (\ref{2016.11.26.eq1}) and (\ref{2016.11.26.eq2}).
\end{proof}
Following the general case considered in Appendix \ref{App.1} we will write $(X,f)\times_Z b^{g}$ (resp. $a^f\times_Z (Y,g)$) for the morphism in ${\cal C}/X$ (resp. ${\cal C}/Y$) corresponding to $Id_{(X,f)}\times_Z b^g$ (resp. $a^f\times_Z Id_{(Y,g)}$). 

In view of Lemma \ref{2015.05.14.l1} and (\ref{2016.11.24.eq1}), for any $(X,f:X\sr Z)$, the functions
$$(Y,g)\mapsto ((X,f)\times_Z(Y,g),pr_1)$$
$$(b^g:(Y',g')\sr (Y,g))\mapsto (X,f)\times_Z b^g$$
form a functor from ${\cal C}/Z$ to ${\cal C}/X$ and similarly by Lemma \ref{2015.05.14.l1} and (\ref{2016.11.24.eq2}), for any $(Y,g:Y\sr Z)$ the functions 
$$(X,f)\mapsto ((X,f)\times_Z(Y,g), pr_2)$$
$$(a^f:(X',f')\sr (X,f))\mapsto a^f\times_Z (Y,g)^{f}$$
form a functor from ${\cal C}/Z$ to ${\cal C}/Y$.
\begin{definition}
\llabel{2015.03.27.def1}
A locally cartesian closed structure on a category $\cal C$ is a family of (binary) cartesian closed structures on the categories ${\cal C}/Z$ for all $Z\in {\cal C}$.

We usually denote the binary product on ${\cal C}/Z$ as above. 

We usually denote the internal-hom objects in ${\cal C}/Z$ by $\uu{Hom}_Z((X,f),(Y,g))$ and the canonical morphisms from $\uu{Hom}_Z((X,f),(Y,g))$ to $Z$ by $f \triangle g$. 

The rest of the notations ($\uu{Hom}_Z((X,f),b^g)$, $ev^{(X,f)}_{(Y,g)}$, $adj^{(W,h),(X,f)}_{(Y,g)}$, $\uu{Hom}_Z(a^f,(Y,g))$) immediately follow from the ones introduced previously.

A locally cartesian closed category is a category together with a locally cartesian closed structure on it.
\end{definition}
The name ``locally cartesian closed'' follows naturally from this definition and the intuition based on the example of the category of open sets of a topological space or a Grothendieck site. If only the subsets of the open sets of a particular covering are known then one sometimes says that the space is known only locally, but the global structure that arises from gluing of all these subsets together is not known. Hence the ``local'' structure of a category is given by the structure of its slice categories. 
\begin{example}\rm
\llabel{2015.05.20.ex1}
The following example shows that there can be many different structures of a category with pullbacks on a category and also many locally cartesian closed structures.

Let us take as our category the category $F$ whose objects are natural numbers and 
$$Mor(n,m)=Fun(\{0,\dots,n-1\},\{0,\dots,m-1\})$$  

Since every isomorphism class contains exactly one object every auto-equivalence of this category is an automorphism. Let $\Phi$ be such an automorphism. It is easy to see that it must be identity on the set of objects. Let $X=\{0,1\}$. Consider $\Phi$ on $End(X)$. Since $\Phi$ must respect identities and compositions, $\Phi$ must take $Aut(X)$ to itself and must act on it by identity. If $1$ and $\sigma$ are the two elements of $Aut(X)$ we conclude that $\Phi(1)=1$ and $\Phi(\sigma)=\sigma$. 

Let us choose now any structure of a category with pullbacks on $F$ and let us consider two new structures $str_1$ and $str_{\sigma}$ that are obtained by modifying pullbacks as follows. In both structures we set all pullbacks to be as they were except for the pullback of the pair of morphisms $(Id_X,Id_X)$. For this pair we set the pullbacks to be as follows:
\begin{eq}\llabel{2015.05.20.sq1}
\begin{CD}
X @>Id_X>> X\\
@V Id_X VV @VV Id_X V\\
X @>Id_X>> X
\end{CD}
\spc\spc\mbox{\rm for $str_1$ and}\spc\spc
\begin{CD}
X @>\sigma>> X\\
@V \sigma VV @VV Id_X V\\
X @>Id_X>> X
\end{CD}
\spc\spc\mbox{\rm for $str_{\sigma}.$}
\end{eq}
The preceding discussion shows that there is no auto-equivalence which would transform $str_1$ into $str_{\sigma}$. 

The category $F$ also has a locally cartesian closed structure and it can be shown that it can be modified so that its pullback components are $str_1$ and $str_{\sigma}$. This shows that $F$ has at least two locally cartesian closed structures that are not equivalent modulo the auto-equivalences of $F$.

The solution to this seeming paradox is that there is a category structure on the set of pullback structures (resp. locally cartesian closed structures) on a category. Any two pullback structures (resp. lcc structures) are isomorphic in this category and in this sense pullbacks on a category are ``unique''. 
\end{example}
\begin{remark}\rm
\llabel{2015.05.20.rem1}
The following remark is not used anywhere in the paper and we include it only as a comment for those readers who are familiar with the univalent foundations where there is a notion of a category and pre-category. There the types of pullback structures and of locally cartesian closed structures on a category (as opposed to those on a general pre-category) are of h-level 1, i.e., classically speaking are either empty or contain only one element. 

In addition any such structure on a pre-category should define a structure of the same kind on the Rezk completion of this pre-category with all the different structures on the pre-category becoming equal on the Rezk completion. In the case of the previous example the Rezk completion of $F$ is the category $FSets$ of finite sets and in view of the univalence axiom for finite sets the two pullbacks of \ref{2015.05.20.sq1} will become equal in $FSets$. 
\end{remark}


\section{Acknowledgements}
\label{Sec.Ack}

I am grateful to the Department of Computer Science and Engineering of the University of Gothenburg and Chalmers University of Technology for its the hospitality during my work on the first version of the paper.  

Work on this paper was supported by NSF grant 1100938.

This material is based on research sponsored by The United States Air Force Research Laboratory under agreement number FA9550-15-1-0053. The US Government is authorized to reproduce and distribute reprints for Governmental purposes notwithstanding  any copyright notation thereon.

The views and conclusions contained herein are those of the author and should not be interpreted as necessarily representing the official policies or endorsements, either expressed or implied, of the United States Air Force Research Laboratory, the U.S. Government or Carnegie Mellon University.

\def\cprime{$'$}


\begin{thebibliography}{10}

\bibitem{Cartmell0}
John Cartmell.
\newblock Generalised algebraic theories and contextual categories.
\newblock {\em Ph.D. Thesis, Oxford University}, 1978.
\newblock \url{http://www.cs.ru.nl/~spitters/Cartmell.pdf}.

\bibitem{Cartmell1}
John Cartmell.
\newblock Generalised algebraic theories and contextual categories.
\newblock {\em Ann. Pure Appl. Logic}, 32(3):209--243, 1986.

\bibitem{GambinoHyland}
Nicola Gambino and Martin Hyland.
\newblock Wellfounded trees and dependent polynomial functors.
\newblock In {\em Types for proofs and programs}, volume 3085 of {\em Lecture
  Notes in Comput. Sci.}, pages 210--225. Springer, Berlin, 2004.

\bibitem{MacLane}
S.~MacLane.
\newblock {\em Categories for the working mathematician}, volume~5 of {\em
  Graduate texts in {M}athematics}.
\newblock Springer-Verlag, 1971.

\bibitem{Streicher}
Thomas Streicher.
\newblock {\em Semantics of type theory}.
\newblock Progress in Theoretical Computer Science. Birkh\"auser Boston Inc.,
  Boston, MA, 1991.
\newblock Correctness, completeness and independence results, With a foreword
  by Martin Wirsing.

\bibitem{Cfromauniverse}
Vladimir Voevodsky.
\newblock A {C}-system defined by a universe category.
\newblock {\em Theory Appl. Categ.}, 30(37):1181--1215, 2015.
\newblock \url{http://www.tac.mta.ca/tac/volumes/30/37/30-37.pdf}.

\bibitem{fromunivwithPi}
Vladimir Voevodsky.
\newblock Products of families of types in the {C-systems} defined by a
  universe category.
\newblock {\em arXiv 1503.07072}, pages 1--30, 2015.

\bibitem{fromunivwithPiI}
Vladimir Voevodsky.
\newblock Products of families of types and {$(\Pi,\lambda)$}-structures on
  {C}-systems.
\newblock {\em Theory Appl. Categ.}, 31(36):1044--1094, 2016.
\newblock \url{http://www.tac.mta.ca/tac/volumes/31/36/31-36.pdf}.

\bibitem{Csubsystems}
Vladimir Voevodsky.
\newblock Subsystems and regular quotients of {C-systems}.
\newblock In {\em Conference on Mathematics and its Applications, (Kuwait City,
  2014)}, number 658 in Contemporary Mathematics, pages 127--137, 2016.

\bibitem{fromunivwithPiII}
Vladimir Voevodsky.
\newblock The {$(\Pi,\lambda)$}-structures on the {C}-systems defined by
  universe categories.
\newblock {\em In preparation}, 2017.

\end{thebibliography}

\end{document}